\documentclass{amsart}
\usepackage{times,mathrsfs,array,amssymb,pb-diagram,epsfig}

\usepackage{footnote}
\usepackage[section]{placeins} 
\usepackage{hyperref} 

\newcounter{lemma}
\newtheorem{Theorem}{Theorem}
\newtheorem{Lemma}[lemma]{Lemma}
\newtheorem{Corollary}[lemma]{Corollary}
\newtheorem{Proposition}[lemma]{Proposition}

\theoremstyle{definition}
\newtheorem{Example}[lemma]{Example}
\newtheorem{Definition}[lemma]{Definition}

\newtheorem{Remark}[lemma]{Remark}

\def\C{\mathbb C}
\def\H{\mathbb H}
\def\O{\mathcal O}

\def\Q{\mathbb Q}
\def\R{\mathbb R}
\def\Z{\mathbb Z}

\def\Im{\mathrm{Im}\,}

\def\div{\operatorname{div}}
\def\mod{\  \mathrm{mod}\ }

\def\Im{\mathrm{Im\,}}
\def\tr{\operatorname{tr}}

\def\JS#1#2{\left(\frac{#1}{#2}\right)}
\def\GL{\mathrm{GL}}
\def\SL{\mathrm{SL}}
\def\CM{\mathrm{CM}}
\def\Gal{\mathrm{Gal}}
\def\M#1#2#3#4{\begin{pmatrix}#1&#2\\#3&#4\end{pmatrix}}
\def\SM#1#2#3#4{\left(\begin{smallmatrix}#1&#2\\#3&#4\end{smallmatrix}
  \right)}

\newcommand*{\rom}[1]{\expandafter\@slowromancap\romannumeral #1@}

\begin{document}

\title{Ramanujan-type $1/\pi$-series from bimodular forms}

\author{Liuquan Wang and Yifan Yang}
\address{School of Mathematics and Statistics, Wuhan University, Wuhan 430072, Hubei, People's Republic of China}
\address{Mathematics Division, National Center for Theoretical Sciences, Taipei 10617, Taiwan}
\email{wanglq@whu.edu.cn;mathlqwang@163.com}

\address{Department of Mathematics, National Taiwan University and
  National Center for Theoretical Sciences, Taipei 10617, Taiwan}
\email{yangyifan@ntu.edu.tw}

\subjclass[2010]{11F03, 11F11, 11Y60, 33C05, 33C45}

\keywords{Ramanujan $1/\pi$-series; bimodular forms; Ap\'ery-like
sequence; hypergeometric series; CM-points}

\begin{abstract}
We develop an approach to establish $1/\pi$-series from bimodular
forms. Utilizing this approach, we obtain new families of $2$-variable
$1/\pi$-series associated to Zagier's sporadic Ap\'ery-like sequences.\end{abstract}

\maketitle

\section{Introduction}

In 1914, Ramanujan \cite{Ramanujan} gave seventeen series
representations for $1/\pi$ of the form
$$
\sum_{n=0}^\infty\frac{(1/2)_n(a)_n(1-a)_n}{(n!)^3}
(An+B)C^n=\frac D\pi
$$
with $a\in\{1/2,1/3,1/4,1/6\}$, $A,B,C\in\Q$, and $D\in\overline\Q$,
such as
$$
\sum_{n=0}^\infty\frac{(1/2)_n^3}{(n!)^3}(6n+1)
\left(\frac14\right)^n=\frac4\pi,
$$
where $(a)_n$ is the Pochhammer symbol defined by $(a)_n=a(a+1)\ldots(a+n-1)$.
The first proof of such identities was given by Borwein and Borwein
\cite{Borwein}. Since then, mathematicians
\cite{Almkvist-Guillera,Berndt,Borwein2,Chan-Chan-Liu,
  Chan-Cooper,Chan-Liaw,Chan-Liaw-Tan,Chan-Tanigawa,
  Chan-Verrill,Chudnovsky,Cooper,CooperWZ,Zudilin} have generalized Ramanujan's
$1/\pi$-series and produced many series of similar nature with the
coefficients $(1/2)_n(a)_n(1-a)_n/(n!)^3$ replaced by binomial sums,
such as the Ap\'ery numbers
$$
\sum_{k=0}^n\binom nk^2\binom{n+k}k^2.
$$
(See \cite{survey} for a more thorough survey.)

In \cite{Sun-list,Sun-pub}, through some powerful insight, Z.-W. Sun
discovered (conjectured) many two-variable anaologues of Ramanujan's
$1/\pi$-series. For instance, if we let
$$
T_n(b,c)=\sum_{m=0}^{\lfloor n/2\rfloor}\binom n{2m}
\binom{2m}m b^{n-2m}c^m
$$
denote the coefficient of $x^n$ in the expansion of $(x^2+bx+c)^n$,
then one of Sun's conjectural formulas states that
\begin{equation} \label{equation: Sun's example}
\sum_{n=0}^\infty\frac{(1/2)_n^2}{(n!)^2}T_n(34,1)(30n+7)
\left(-\frac1{64}\right)^n
=\frac{12}\pi
\end{equation}
(\cite[Entry (I2)]{Sun-list}). Among Sun's conjectural formulas, three
families are of the form
$$
\sum_{n=0}^\infty\frac{(a)_n(1-a)_n}{(n!)^2}T_n(b,c)(An+B)C^n
=\frac D\pi,
$$
$a\in\{1/2,1/3,1/4\}$, $b,c,A,B,C\in\Q$, and $D\in\overline\Q$. They
were proved by Chan, Wan, and Zudilin \cite{CWZ}.
They used cleverly a link
between $T_n(b,c)$ and Legendre polynomials, theory of hypergeometric
functions, and a formula of Brafman \cite{Brafman} to convert standard
Ramanujan-type $1/\pi$-series to Sun's formulas. Using similar
techniques, Rogers and Straub \cite{Rogers-Straub} proved another
family of Sun's conjectural formulas. More series related to Legendre polynomials and hence $T_n(b,c)$ were discussed by Wan \cite{Wan}.

It turns out that Sun's series
\begin{equation} \label{equation: Sun's series}
  \sum_{n=0}^\infty\frac{(a)_n(1-a)_n}{(n!)^2}T_n(b,c)z^n
  =\sum_{n=0}^\infty\frac{(a)_n(1-a)_n}{(n!)^2}T_n(bz,cz^2)
\end{equation}
were first considered by Brafman \cite{Brafman} in 1950's. For
instance, one of Brafman's results, Equation (13) of \cite{Brafman},
states that
\begin{equation*}
  \begin{split}
&\sum_{n=0}^\infty\frac{(a)_n(1-a)_n}{(n!)^2}P_n(x)t^n \\
&\qquad\qquad={}_2F_1\left(a,1-a;1;\frac{1-t-\rho}2\right)
  {}_2F_1\left(a,1-a;1;\frac{1+t-\rho}2\right),
\end{split}
\end{equation*}
where $\rho=(1-2xt+t^2)^{1/2}$ and
\begin{equation} \label{equation: Legendre}
P_n(x)=\sum_{m=0}^n\binom nm\left(\frac{x-1}2\right)^m
\left(\frac{x+1}2\right)^{n-m}
\end{equation}
is the $n$th Legendre polynomial. Since the Legendre polynomial can
also be expressed as
$$
P_n(x)=\sum_{m=0}^{\lfloor n/2\rfloor}\binom{2m}m\binom n{2m}
\left(\frac{x^2-1}4\right)^mx^{n-2m}=T_n\left(x,\frac{x^2-1}4\right)
$$
(see \eqref{equation: combinatorial} below), Brafman's formula shows
that the series in \eqref{equation: Sun's series} is equal to a
product of two ${}_2F_1$-hypergeometric functions.

The same series \eqref{equation: Sun's series} also appeared earlier
in literature in a very different context.
In \cite{Lian-Yau}, Lian and Yau explored relations among mirror
maps and Picard-Fuchs differential equations for families of toric
Calabi-Yau threefolds, modular forms, and hypergeometric
functions. One family of toric Calabi-Yau threefolds they considered
consists of hypersurfaces of degree $24$ in the weighted projective
space $\mathbb P^4[1,1,2,8,12]$. In an earlier work
\cite[(A.36)]{Hosono-Klemm}, it was shown that the Picard-Fuchs
system for this family is given by
\begin{equation*}
  \begin{split}
    &\theta_x(\theta_x-2\theta_y)-x(\theta_x+1/6)(\theta_x+5/6),
    \\
    &\theta_y(\theta_y-2\theta_z)-y(2\theta_y-\theta_x+1)
    (2\theta_y-\theta_x), \\
    &\theta_z^2-z(2\theta_z-\theta_y+1)(2\theta_z-\theta_y),
  \end{split}
\end{equation*}
where for a variable $t=x,y,z$, we let $\theta_t=t\partial/\partial
t$. Letting $z\to 0$, one gets a new system
\begin{equation*}
  \begin{split}
    &\theta_x(\theta_x-2\theta_y)-x(\theta_x+1/6)(\theta_x+5/6),
    \\
    &\theta_y^2-y(2\theta_y-\theta_x+1)
    (2\theta_y-\theta_x),
  \end{split}
\end{equation*}
which coincides with the Picard-Fuchs system for the family of toric
$K3$ surfaces corresponding to hypersurfaces of degree $12$ in
$\mathbb P^3[1,1,4,6]$. Up to scalars, this system has a unique
solution holomorphic near $(x,y)=(0,0)$. It is easy to verify that this solution is given by
\begin{equation*} \label{equation: alternative Sun}
\sum_{n=0}^\infty\sum_{m=0}^{\lfloor n/2\rfloor}
\frac{(1/6)_n(5/6)_n}{(n!)^2}\binom n{2m}\binom{2m}mx^ny^m
=\sum_{n=0}^\infty\frac{(1/6)_n(5/6)_n}{(n!)^2}T_n(x,x^2y),
\end{equation*}
which is Sun's series \eqref{equation: Sun's series} for the case
$a=1/6$. Likewise, Sun's series for
$a=1/4,1/3,1/2$, when written in the same form as the left-hand side
of \eqref{equation: alternative Sun}, are solutions of the
Picard-Fuchs systems
\begin{equation} \label{equation: hypergeometric PDE}
  \begin{split}
    &\theta_x(\theta_x-2\theta_y)-x(\theta_x+a)(\theta_x+1-a),
    \\
    &\theta_y^2-y(2\theta_y-\theta_x+1)
    (2\theta_y-\theta_x)
  \end{split}
\end{equation}
for some families of $K3$ surfaces.

One remarkable feature of these Picard-Fuchs systems \eqref{equation:
  hypergeometric PDE} is that they admit parameterization by bimodular
forms, first described by \cite[Corollary
1.3]{Lian-Yau} (for the case $a=1/6$) and later elaborated in more
details in \cite{Yang-Yui}.

The notion of bimodular forms was first introduced by
Stienstra and Zagier \cite{Stienstra-Zagier}. However, since bimodular
forms are not frequently studied and there is no
universally used definition, here we take the liberty to define
bimodular forms as follows.

\begin{Definition} Let $\Gamma$ be a congruence subgroup of
$\SL(2,\Z)$ and $N(\Gamma)$ be its normalizer in $\SL(2,\R)$. For a
subgroup $G$ of $N(\Gamma)$ containing $\Gamma$, a character $\chi_1$
on $\Gamma$, and a character $\chi_2$ on $G$ such that
$\chi_2|_\Gamma=\chi_1^2$, we say a function $F:\H^2\to\C$ is a
\emph{bimodular form} of weight $k$ on $(\Gamma,G)$ with characters
$(\chi_1,\chi_2)$ if
\begin{enumerate}
  \item[(i)] $F$ is a modular form of weight $k$ with character
    $\chi_1$ in each of the two variables, i.e., for all $\gamma=\SM
    abcd\in\Gamma$,
    \begin{equation} \label{equation: bimodular def 1}
    F\left(\gamma\tau_1,\tau_2\right)
    =\chi_1(\gamma)(c\tau_1+d)^kF(\tau_1,\tau_2), \quad
    F\left(\tau_1,\gamma\tau_2\right)
    =\chi_1(\gamma)(c\tau_2+d)^kF(\tau_1,\tau_2),
    \end{equation}
    and
  \item[(ii)] for all $\gamma=\SM abcd\in G$,
    \begin{equation} \label{equation: bimodular def 2}
      F\left(\gamma\tau_1,\gamma\tau_2\right)
        =\chi_2(\gamma)(c\tau_1+d)^k(c\tau_2+d)^kF(\tau_1,\tau_2).
    \end{equation}
\end{enumerate}
Likewise, we say a meromorphic function $f:\H^2\to\C$ is a bimodular
function on $(\Gamma,G)$ if \eqref{equation: bimodular def 1} and
\eqref{equation: bimodular def 2} hold with $k=0$ and the characters
$\chi_1$ and $\chi_2$ are trivial. We let $K(\Gamma,G)$ denote the field of bimodular functions on $(\Gamma,G)$.
\end{Definition}

In the present article we will focus on partial differential equations satisfied by bimodular forms and their application to two-variable Ramanujan $1/\pi$-series and leave other aspects of bimodular forms to future study.

The connection between bimodular forms and partial differential equations is given by Theorem 2.1 of \cite{Yang-Yui}, which states that if $F$ is a bimodular form of weight $1$ and $x$ and $y$ are two algebraically independent bimodular functions on $(\Gamma,G)$, then $F$, as a function of $x$ and $y$, satisfies a system of partial differential equations with algebraic functions of $x$ and $y$ as coefficients. (We will recall the exact expressions for the coefficients in the proof of Proposition \ref{proposition: sporadic PDEs} below.) When the genus of $\Gamma$ is $0$, the field of $K(\Gamma,G)$, being a subfield of $K(\Gamma,\Gamma)$, is a rational function field of transcendence degree $2$ over $\C$, by a theorem of Castelnuovo. In such a case, if $x$ and $y$ are chosen such that they generate the field of bimodular functions on $(\Gamma,G)$, then all the coefficients in partial differential equations will be rational functions in $x$ and $y$. Hence, when we expand $F$ with respect to $x$ and $y$, the coefficients will satisfy certain recursive relations.

It turns out that all the bimodular functions $x(\tau_1,\tau_2)$, $y(\tau_1,\tau_2)$ and bimodular forms $F(\tau_1,\tau_2)$ in the two-variable Ramanujan $1/\pi$-series in literature are invariant under the action of interchanging $\tau_1$ and $\tau_2$. This additional symmetry enables us to produce many more two-variable $1/\pi$-series with rational arguments using CM discriminants of larger class numbers. For instance, the pair of congruence subgroups associated to the series in \eqref{equation: Sun's example} is $(\Gamma_0(4),\Gamma_0(4)+\SM1021)$. Using the symmetries from $\SM1021$ and the interchanging of variables, one can show that the values of $x$ and $y$ are rational numbers if $\tau_1$ and $\tau_2$ are two suitably chosen CM-points of discriminant $d$ whose class group is a Klein $4$-group. For example, the series in \eqref{equation: Sun's example} comes from the case of $d=-192$.

In this paper, we shall consider the case where $G/\Gamma$ is a Klein $4$-group and the genus of the modular curve $X(\Gamma)$ is $0$. Using bimodular properties and the additional symmetry from interchanging of variables, we produce many new two-variable $1/\pi$-series with rational arguments. Most of our examples are coming from imaginary quadratic orders having a class group isomorphic to the elementary $2$-group of order $8$. In some rare cases, the class groups can be isomorphic to the direct product of a cyclic group of order $4$ and a cyclic group of order $2$ instead (notably, cases of discriminants $-576$, $-819$, $-1008$, $-3627$, and $-3843$ for $(\Gamma,G)=(\Gamma_0(9),\Gamma_0(9)+\SM3{-2}9{-3},w_9)$).

Note that it is rather easy to construct bimodular forms $F$ and bimodular functions $x$ and $y$ that generate $K(\Gamma,G)$ when $\Gamma$ has genus $0$. It is not difficult either to determine the recursive relations for the coefficients in the expansion of $F$ in terms of $x$ and $y$. However, it is hard to predict which $F$, $x$, and $y$ will yield an expansion with nice coefficients (say, expressible as sums of binomial coefficients). Here we will rely on results from \cite{CWZ,Wan-Zudilin}. The two-variable $1/\pi$-series obtained will involve Zagier's sporadic Ap\'ery-like sequences \cite{Zagier-Apery}, which we review in the next section.

\begin{Remark}
After the first draft of the paper was completed, we discovered that
Z.-W. Sun \cite{Sun-2019} has also found (conjectured) most of the
$1/\pi$-series in our tables in Appendix \ref{section: sporadic data}
independently. He also observed that many of his conjectures are related to imaginary quadratic fields of class number $8$.
We have made a comparison of our series with those of Sun \cite{Sun-list,Sun-2019} (proved by \cite{CWZ}) and Chan, Wan and Zudilin \cite{CWZ}, and give the corresponding equation numbers in their papers when the series coincide.
\end{Remark}

\section{Bimodular forms associated to Ap\'ery-like sequences}
Let $a$, $b$, and $c$ be real numbers and $\{u_n\}$ be the sequence of
numbers defined recursively by $u_{-1}=0$, $u_0=1$, and
\begin{equation} \label{equation: un}
(n+1)^2u_{n+1}=(an^2+an+b)u_n-cn^2u_{n-1}
\end{equation}
for $n\ge 0$. The series $f(t)=\sum_{n=0}^\infty u_nt^n$ is a solution
of the differential equation
\begin{equation} \label{equation: DE for un}
(\theta^2-t(a\theta^2+a\theta+b)+ct^2(\theta+1)^2)f=0,
\quad \theta=t\frac d{dt}.
\end{equation}
(Up to a scalar, this is the unique solution holomorphic near $t=0$.)
The definition of such a sequence is motivated by the Ap\'ery numbers,
corresponding to the case $(a,b,c)=(11,3,-1)$, that first appeared in
Ap\'ery's proof of irrationality of zeta-values
\cite{Apery,vanderPoorten}. A surprising feature
of the Ap\'ery numbers is that they are all integers. In literature,
we say $\{u_n\}$ is an \emph{Ap\'ery-like sequence} if all numbers
$u_n$ are integers. In \cite{Zagier-Apery}, Zagier did an extensive
search and found $36$ Ap\'ery-like sequences, up to scalings. Among
them, some are terminating sequences, some are polynomials (meaning
that $u_n=g(n)$ for some polynomial $g$), some are hypergeometric
(the cases where $c=0$), some are Legendrian (the cases where
$a^2=4c$), where the differential equation in \eqref{equation: DE for
  un} has three singularities and can be reduced to a hypergeometric
one, and there are six sporadic cases. All hypergeometric, Legendrian,
and sporadic cases have modular-function origin. Here we
are interested in the sporadic cases. Their modular properties and
corresponding $u_n$ are given in Table \ref{table: sporadic}.
\begin{table}[!htbp]
$$ \extrarowheight3pt\small
\begin{array}{lllll} \hline\hline
  (a,b,c) & \text{group} & t & f  & u_n \\ \hline
  (7,2,-8) & \Gamma_0(6) & \displaystyle\frac{1^36^9}{2^33^9}
           &\displaystyle\frac{2^13^6}{1^26^3}
                                 &\displaystyle\sum_{k=0}^n\binom nk^3 \\
  (10,3,9) & \Gamma_0(6) & \displaystyle\frac{1^46^8}{2^83^4}
           &\displaystyle\frac{2^63^1}{1^36^2}
           &\displaystyle\sum_{k=0}^n\binom nk^2\binom{2k}k \\
  (-17,-6,72)&\Gamma_0(6) &\displaystyle\frac{2^16^5}{1^53^1}
                         &\displaystyle\frac{1^66^1}{2^33^2}
    &\displaystyle\sum_{k=0}^n(-8)^{n-k}\binom nk
     \sum_{j=0}^k\binom kj^3 \\
  (12,4,32)&\Gamma_0(8)
                         &\displaystyle\frac{1^44^28^4}{2^{10}}
                         &\displaystyle\frac{2^{10}}{1^44^4}
    &\displaystyle\sum_{k=0}^n\binom nk\binom{2k}k\binom{2(n-k)}{n-k}\\
  (-9,-3,27) & \Gamma_0(9) & \displaystyle\frac{9^3}{1^3}
            &\displaystyle\frac{1^3}{3^1} & \displaystyle
    \sum_{k=0}^{\lfloor n/3\rfloor}(-3)^{n-3k}
    \binom n{3k}\frac{(3k)!}{(k!)^3} \\
  (11,3,-1) & \Gamma_1(5) & \text{see \eqref{add-t-defn}} & \text{see \eqref{add-f-defn}} &
  \displaystyle\sum_{k=0}^n\binom nk^2\binom{n+k}n\\
\hline\hline
\end{array}
$$
  \caption{Sporadic Ap\'ery-like sequences}
   \label{table: sporadic}
\end{table}
Here the notation $1^36^9/2^33^9$ etc. is a shorthand for
$\eta(\tau)^3\eta(6\tau)^9/\eta(2\tau)^3\eta(3\tau)^9$ etc. The
modular function $t$ and the modular form $f$ in the case $(11,3,-1)$
are
\begin{align}\label{add-t-defn}
t=q\prod_{n=1}^\infty(1-q^n)^{5\JS n5},
\end{align}
and
\begin{align}\label{add-f-defn}
f=\prod_{n=1}^\infty(1-q^{5n})^2\cdot\prod_{n>0,n\equiv1,4\mod
  5}(1-q^n)^{-3}\prod_{n>0,n\equiv 2,3\mod 5}(1-q^n)^2,
\end{align}
respectively.

\begin{Remark}
  Note that the congruence subgroups in all six cases have
  genus $0$, $4$ cusps, and no elliptic points.

  Note also that in \cite{Wan-Zudilin} and \cite{Zagier-Apery}, the
  case $(a,b,c)=(-9,-3,27)$ is listed as $(9,3,27)$ instead. The
  reason is that Zagier normalized the parameter $a$ to be
  positive. With $(9,3,27)$ instead of $(-9,-3,27)$, the modular
  function $t$ and the modular form $f$ are
  $$
  \frac{\eta(\tau)^3\eta(4\tau)^3\eta(18\tau)^9}
  {\eta(2\tau)^9\eta(9\tau)^3\eta(36\tau)^3}
  =-\frac{\eta(9(\tau+1/2))^3}{\eta(\tau+1/2)^3}
  $$
  and
  $$
  \frac{\eta(2\tau)^9\eta(3\tau)\eta(12\tau)}
  {\eta(\tau)^3\eta(4\tau)^3\eta(6\tau)^3}
  =\frac{\eta(\tau+1/2)^3}{\eta(3(\tau+1/2))},
  $$
  respectively. In other words, the congruence group with this choice
  of $(a,b,c)$ is
  $$
  \M1{1/2}01\Gamma_0(9)\M1{1/2}01^{-1}.
  $$
  Likewise, the case $(-17,-6,72)$ is listed as $(17,6,72)$ with the
  modular function $t$ and $f$ given by
  $$
  \frac{\eta(\tau)^5\eta(3\tau)\eta(4\tau)^5\eta(6\tau)^2\eta(12\tau)}
  {\eta(2\tau)^{14}}=-\frac{\eta(2(\tau+1/2))\eta(6(\tau+1/2))^5}
  {\eta(\tau+1/2)^5\eta(3(\tau+1/2))}
  $$
  and
  $$
  \frac{\eta(2\tau)^{15}\eta(3\tau)^2\eta(12\tau)^2}
  {\eta(\tau)^6\eta(4\tau)^6\eta(6\tau)^5}
  =\frac{\eta(\tau+1/2)^6\eta(6(\tau+1/2))}
  {\eta(2(\tau+1/2)^3\eta(3(\tau+1/2))^2},
  $$
  respectively, in \cite{Wan-Zudilin,Zagier-Apery}.
\end{Remark}

Let $P_n(x)$ be the $n$th
Legendre polynomial defined by \eqref{equation: Legendre}.
In \cite[Theorem 2]{Wan-Zudilin}, Wan and Zudilin proved that
\begin{equation*}
  \begin{split}
    &\sum_{n=0}^\infty u_nP_n\left(
      \frac{(X+Y)(1+cXY)-2aXY}{(Y-X)(1-cXY)}\right)
    \left(\frac{Y-X}{1-cXY}\right)^n \\
    &\qquad=(1-cXY)\left(\sum_{n=0}^\infty u_nX^n\right)
    \left(\sum_{n=0}^\infty u_nY^n\right).
  \end{split}
\end{equation*}
Now it is straightforward to prove by induction that
\begin{equation} \label{equation: combinatorial}
\sum_{m=0}^n\binom nm^2r^ms^{n-m}
=\sum_{m=0}^{\lfloor n/2\rfloor}\binom{2m}m\binom n{2m}
(rs)^m(r+s)^{n-2m}.
\end{equation}
Thus, setting
\begin{equation} \label{equation: Apery xy}
x=\frac{(X+Y)(1+cXY)-2aXY}{(1-cXY)^2}, \quad
y=\frac{XY(1-aX+cX^2)(1-aY+cY^2)}{((X+Y)(1+cXY)-2aXY)^2},
\end{equation}
we may write Wan and Zudilin's formula as
\begin{equation} \label{equation: WZ identity}
  \begin{split}
    \sum_{n=0}^\infty u_n\sum_{m=0}^{\lfloor n/2\rfloor}
    \binom{2m}m\binom n{2m}x^ny^m
    =(1-cXY)\left(\sum_{n=0}^\infty u_nX^n\right)
    \left(\sum_{n=0}^\infty u_nY^n\right).
  \end{split}
\end{equation}
\begin{Remark}
A proof of this formula different from the one given in \cite{Wan-Zudilin} was provided by Chan \cite{Chan}. Chan and Tanigawa \cite{Chan-T} also gave a generalization of this formula.
Just like the role of Clausen's identity in establishing Ramanujan's series for $1/\pi$, the identity \eqref{equation: WZ identity} will be  a key in proving 2-variable $1/\pi$-series in this paper. There are more formulas of similar nature in the literature. See the works of Chan et al. \cite{Chan-Tanigawa} or  Straub and Zudilin  \cite[Theorem 1]{Straub-Zudilin}, for example.
\end{Remark}

We now describe the bimodular meaning of the identity  \eqref{equation: WZ identity} and use Theorem 2.1 of \cite{Yang-Yui} to give a new proof of it (see Corollary \ref{cor-WZ-id}).

Assume that $c\neq 0$ and $a^2-4c\neq 0$. Let $\alpha$ and $\beta$ be
the two numbers such that
$$
1-aX+cX^2=(1-\alpha X)(1-\beta X).
$$
Then $x$ and $y$ are invariant under
$$
(X,Y)\longmapsto\left(\frac1{cX},\frac1{cY}\right), \quad
(X,Y)\longmapsto\left(\frac{1-\alpha X}{\alpha(1-\beta X)},
  \frac{1-\alpha Y}{\alpha(1-\beta Y)}\right)
$$
and their composition
$$
(X,Y)\longmapsto\left(\frac{1-\beta X}{\beta(1-\alpha X)},
  \frac{1-\beta Y}{\beta(1-\alpha Y)}\right),
$$
in addition to the obvious symmetry $(X,Y)\mapsto(Y,X)$. When
$(a,b,c)$ is one of the sporadic cases, these symmetries all come from
normalizers of the corresponding congruence subgroups in
$\SL(2,\R)$.

\begin{Proposition} \label{proposition: sporadic bimodular}
  Let $(a,b,c)$ be one of the six sporadic cases, and $\Gamma$,
  $t(\tau)$ and $f(\tau)$ be given as in Table \ref{table: sporadic}.
  Then the involutions $\sigma_1:t\mapsto1/ct$ and
  $\sigma_2:t\mapsto(1-\alpha t)/\alpha(1-\beta t)$ correspond to the
  action of normalizers of  $\Gamma$ in $\SL(2,\R)$ given by
  $$\extrarowheight3pt
  \begin{array}{lllll} \hline\hline
  (a,b,c) & \alpha & \beta & \sigma_1 & \sigma_2 \\ \hline
  (7,2,-8) &  -1 & 8 & w_2 & w_3 \\
  (10,3,9) &  1 & 9 & w_3  & w_2 \\
  (-17,-6,72) & -8 & -9 & w_6 & w_2 \\
  (12,4,32) & 4 & 8 & \SM4184 & \SM2{-1}8{-2} \\
  (-9,-3,27) & (-9+3\sqrt{-3})/2 & (-9-3\sqrt{-3})/2 & w_9
                                      &\SM{-3}{-2}93\\
  (11,3,-1) & (11+5\sqrt5)/2 & (11-5\sqrt5)/2
            & \SM2{-1}5{-2} & \SM0{-1}50 \\
  \hline\hline
  \end{array}
  $$
  (For convenience, we represent elements in $N(\Gamma)$ by matrices
  in $\GL^+(2,\Q)$, instead of matrices in $\SL(2,\R)$.)
  Moreover, set $t_1=t(\tau_1)$, $t_2=t(\tau_2)$,
$$
x(\tau_1,\tau_2)=\frac{(t_1+t_2)(1+ct_1t_2)-2at_1t_2}{(1-ct_1t_2)^2},
$$
$$
y(\tau_1,\tau_2)=\frac{t_1t_2(1-at_1+ct_1^2)(1-at_2+ct_2^2)}
{((t_1+t_2)(1+ct_1t_2)-2at_1t_2)^2},
$$
and
$$
F(\tau_1,\tau_2)=(1-ct(\tau_1)t(\tau_2))f(\tau_1)f(\tau_2).
$$
Then $x$ and $y$ are bimodular functions on $(\Gamma,G)$, and $F$ is a
bimodular form of weight $1$ on $(\Gamma,G)$ with characters
$(\chi_1,\chi_2)$, where $\Gamma$, $G$, $\chi_1$, and $\chi_2$ are
given by
$$ \extrarowheight3pt \small
\begin{array}{lllll} \hline\hline
  (a,b,c) & \Gamma & G & \chi_1 & \chi_2 \\ \hline
  (7,2,-8) & \Gamma_0(6) & \Gamma_0(6)+w_2,w_3 & \JS{-3}\cdot &
    \chi_2(w_2)=1,\chi_2(w_3)=-1\\
  (10,3,9) & \Gamma_0(6) & \Gamma_0(6)+w_2,w_3 & \JS{-3}\cdot&
    \chi_2(w_2)=-1,\chi_2(w_3)=1\\
  (-17,-6,72) & \Gamma_0(6) & \Gamma_0(6)+w_2,w_3 & \JS{-3}\cdot&
    \chi_2(w_2)=-1,\chi_2(w_3)=-1\\
  (12,4,32) & \Gamma_0(8) & \Gamma_0(8)+\SM4184,w_8 & \JS{-4}\cdot
    &\chi_2\left(\SM4184\right)=1,\chi_2(w_8)=-1 \\
  (-9,-3,27) & \Gamma_0(9) & \Gamma_0(9)+\SM{-3}{-2}9{3},w_9
     & \JS{-3}\cdot& \chi_2\left(\SM{-3}{-2}93\right)=-1,
                                       \chi_2(w_9)=1\\
  (11,3,-1) & \Gamma_1(5) & \Gamma_0(5)+w_5 & \chi
     & \chi_2\left(\SM2{-1}5{-2}\right)=1,
       \chi_2(w_5)=-1\\
  \hline\hline
\end{array}
$$
and $w_m$ denotes the Atkin-Lehner involution. Here the character
$\chi$ for the case $(11,3,-1)$ is
$$
\chi(\gamma)=\begin{cases}
  1, &\text{if }\gamma\equiv\M1\ast01\mod 5,\\
  -1,&\text{if }\gamma\equiv\M{-1}\ast0{-1}\mod 5. \end{cases}
$$
(Note that since $\chi_1$ is a quadratic character in each case, the
value of $\chi_2$ for $w_m$ does not depend on the choice of
representatives for $w_m$.)
\end{Proposition}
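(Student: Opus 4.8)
The plan is to reduce everything to the transformation behaviour of the single modular function $t$ and the single weight-one form $f$ of Table \ref{table: sporadic} under the two involutions $\sigma_1,\sigma_2$, since $x$, $y$ and $F$ are built entirely out of $t(\tau_1),t(\tau_2),f(\tau_1),f(\tau_2)$ and $G$ is generated over $\Gamma$ by normalizer representatives of $\sigma_1$ and $\sigma_2$. Invariance in each variable under $\Gamma$ is automatic: $t$ is $\Gamma$-invariant, so $x,y$ are $\Gamma$-invariant of weight $0$ in each variable, and $f$ is a weight-one eta quotient on $\Gamma$ whose nebentypus is read off from its exponents by the Ligozat/Newman criterion, which identifies $\chi_1$ in each of the six rows. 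Thus condition (i) of the definition holds for $x,y,F$, the factor $1-ct_1t_2$ being $\Gamma$-invariant in each variable; since $\chi_1$ is quadratic, the required compatibility $\chi_2|_\Gamma=\chi_1^2$ is the trivial character, consistent with applying (i) in both variables to a diagonal $\gamma\in\Gamma$. The whole proposition then comes down to the diagonal action of $\sigma_1$ and $\sigma_2$.

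For the action on $t$ I would compute $t(\sigma_i\tau)$ directly from the eta-quotient expression, using the transformation formula for $\eta$ under the Atkin--Lehner involutions $w_2,w_3,w_6,w_8,w_9$ and, in the remaining rows, under the explicit matrices $\SM4184,\SM{-3}{-2}93,\SM2{-1}5{-2},\SM0{-1}50$ in $\GL^+(2,\Q)$. Since each $\sigma_i^{*}$ is an automorphism of the genus-zero function field $\C(t)$, the result is forced to be a Möbius transformation of $t$, so in practice it suffices to track how $\sigma_i$ permutes the four cusps together with the value and order of $t$ there; matching these pins down $t(\sigma_1\tau)=1/ct$ and $t(\sigma_2\tau)=(1-\alpha t)/\alpha(1-\beta t)$ and simultaneously identifies $\alpha,\beta$ as the roots of $1-at+ct^2$ (so $\alpha+\beta=a$, $\alpha\beta=c$), establishing the first table. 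Bimodularity of $x$ and $y$ is then immediate: the diagonal action of $\sigma_1$ sends $(t_1,t_2)\mapsto(1/ct_1,1/ct_2)$ and that of $\sigma_2$ sends $(t_1,t_2)\mapsto(\sigma_2t_1,\sigma_2t_2)$, and these are exactly the two algebraic symmetries under which $x,y$ were shown invariant just before the proposition; together with the $\Gamma$-invariance this is condition (ii) with $k=0$ and trivial characters.

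For $F$ the extra input is the transformation of $f$. Because $\sigma_i$ normalizes $\Gamma$ and (after checking that conjugation by $\sigma_i$ fixes the nebentypus $\chi_1$) the form $f|_1\sigma_i$ is again weight one on $\Gamma$ with character $\chi_1$, the ratio $R_i(t):=(f|_1\sigma_i)/f$ is a weight-zero modular function, hence rational in $t$, which I would compute from the same eta formulas. The $t$-data already forces its shape: writing $j(\gamma,\tau)$ for the weight-one automorphy factor and plugging $t(\sigma_1\tau_k)=1/ct_k$ into $1-ct_1t_2$ gives
$$1-c\,\frac{1}{ct_1}\frac{1}{ct_2}=-\frac{1-ct_1t_2}{ct_1t_2},$$
so that $F(\sigma_1\tau_1,\sigma_1\tau_2)=\chi_2(\sigma_1)j(\sigma_1,\tau_1)j(\sigma_1,\tau_2)F$ forces $R_1(t)=\kappa t$ with $\kappa^2=-c\,\chi_2(\sigma_1)$; likewise the identity
$$1-c\,(\sigma_2t_1)(\sigma_2t_2)=\frac{(\alpha-\beta)(1-ct_1t_2)}{\alpha(1-\beta t_1)(1-\beta t_2)},$$
forces $R_2(t)=\lambda(1-\beta t)$ with $\lambda^2=\alpha\,\chi_2(\sigma_2)/(\alpha-\beta)$. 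Verifying that the eta computation returns $R_1,R_2$ of exactly these forms proves that $F$ is bimodular of weight $1$, and the constants $\kappa,\lambda$ read off the signs $\chi_2(\sigma_i)=\pm1$ in the second table. Finally I would confirm that $\langle\Gamma,\sigma_1,\sigma_2\rangle$ is the stated Klein four extension $G$ (e.g. $w_6=w_2w_3$, and that $\SM2{-1}8{-2}$ equals $w_8$ or $\SM4184\,w_8$ modulo $\Gamma_0(8)$, and analogously for $\Gamma_0(9)$ and $\Gamma_1(5)$).

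The main obstacle is the accurate determination of the multipliers $\kappa$ and $\lambda$, equivalently the signs $\chi_2(\sigma_i)$, in the four rows where $\sigma_1$ or $\sigma_2$ is a genuine $\GL^+(2,\Q)$ matrix rather than an Atkin--Lehner involution: there one cannot quote a ready-made formula but must decompose the matrix and apply the full $\eta$ transformation under $\SL(2,\Z)$ with its Dedekind-sum multiplier, tracking every root of unity and the branch of the square root in the automorphy factor. The complex cases $(-9,-3,27)$ and $(11,3,-1)$, where $\alpha,\beta$ and hence $R_2$ have irrational or complex coefficients, are the most delicate, since one must check that the a priori complex constant $\lambda^2=\alpha\,\chi_2(\sigma_2)/(\alpha-\beta)$ nonetheless yields a real sign $\chi_2(\sigma_2)=\pm1$; this is the real content of the computation and the only place where something could go wrong.
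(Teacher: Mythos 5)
Your proposal is correct and follows essentially the same route as the paper: both reduce the proposition to computing $t|\sigma_i$ and $f|\sigma_i$ for the explicit normalizer representatives via the Dedekind eta transformation law (your observation that the identities $1-c(\sigma_1t_1)(\sigma_1t_2)=-(1-ct_1t_2)/ct_1t_2$ and $1-c(\sigma_2t_1)(\sigma_2t_2)=(\alpha-\beta)(1-ct_1t_2)/\alpha(1-\beta t_1)(1-\beta t_2)$ force $(f|\sigma_1)/f=\kappa t$ and $(f|\sigma_2)/f=\lambda(1-\beta t)$ up to constants is a useful organizing shortcut, but the verification still comes down to the same eta computations the paper carries out). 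The one point to make explicit: in the $(11,3,-1)$ row $t$ and $f$ are not ordinary eta-quotients, and the paper handles this case by rewriting them in terms of generalized Dedekind eta functions $E_{g,h}$ and invoking their transformation laws, which is the tool your sketch would need to name for the $\Gamma_1(5)$ computation you correctly flag as the most delicate.
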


Since the modular functions and modular forms are expressed in terms of the Dedekind eta function $\eta(\tau)$, here let us recall the transformation formula for $\eta(\tau)$ first.

\begin{Lemma}[{\cite[Pages 125--127]{Weber}}] \label{lemma: eta}
For
$$
  \gamma=\begin{pmatrix}a&b\\ c&d\end{pmatrix}\in\SL(2,\mathbb Z),
$$
the transformation formula for $\eta(\tau)$ is given by, for $c=0$,
$$
  \eta(\tau+b)=e^{\pi ib/12}\eta(\tau),
$$
and, for $c\neq 0$,
$$
  \eta(\gamma\tau)=\epsilon_1(a,b,c,d)\sqrt{\frac{c\tau+d}i}\eta(\tau)
$$
with
\begin{equation}
\label{epsilon1}
  \epsilon_1(a,b,c,d)=
  \begin{cases}\displaystyle
   \left(\frac dc\right)i^{(1-c)/2}
   e^{\pi i\left(bd(1-c^2)+c(a+d)\right)/12},
    &\text{if }c\text{ is odd},\\
  \displaystyle
  \left(\frac cd\right)e^{\pi i\left(ac(1-d^2)+d(b-c+3)\right)/12},
    &\text{if }d\text{ is odd},
  \end{cases}
\end{equation}
where $\displaystyle\left(\frac dc\right)$ is the Legendre-Jacobi
symbol.
\end{Lemma}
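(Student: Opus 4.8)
The plan is to reduce the general statement to the behaviour of $\eta$ under the two standard generators $T=\SM1101$ and $S=\SM0{-1}10$ of $\SL(2,\Z)$ and then to pin down the multiplier explicitly. The case $c=0$, i.e. $\gamma=\pm T^b$, is immediate from the product expansion $\eta(\tau)=e^{\pi i\tau/12}\prod_{n\ge1}(1-e^{2\pi in\tau})$, which yields $\eta(\tau+b)=e^{\pi ib/12}\eta(\tau)$ at once. For $S$ I would prove the reflection formula $\eta(-1/\tau)=\sqrt{\tau/i}\,\eta(\tau)$ in the principal branch; the cleanest self-contained argument passes through the logarithm $\log\eta(\tau)=\pi i\tau/12-\sum_{m\ge1}\frac1m\frac{q^m}{1-q^m}$ (with $q=e^{2\pi i\tau}$) and a contour-integral/residue computation in the style of Siegel, or alternatively through Poisson summation applied to $\sum_n e^{-\pi n^2 s}$ and the Jacobi theta identity. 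To fix the \emph{shape} of the formula for a general $\gamma$ with $c\neq0$, I would invoke $\Delta=\eta^{24}$, which is a weight-$12$ cusp form, so $\eta(\gamma\tau)^{24}=(c\tau+d)^{12}\eta(\tau)^{24}$; taking $24$th roots and using holomorphy and nonvanishing of $\eta$ on the simply connected $\H$ shows that $\eta(\gamma\tau)=\nu(\gamma)\{(c\tau+d)/i\}^{1/2}\eta(\tau)$ for a well-defined $24$th root of unity $\nu(\gamma)$ (one checks $\nu^{24}=1$ since $i^{12}=1$). It then remains only to identify $\nu(\gamma)$ with the explicit $\epsilon_1(a,b,c,d)$ in the statement.

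The main work is this explicit evaluation of $\nu(\gamma)$. The route that produces exactly the Legendre--Jacobi symbols of the statement is the theta-function/Gauss-sum route. After normalising to $c>0$ (the case $c<0$ is recovered by replacing $\gamma$ with $-\gamma$, which acts trivially on $\H$, and checking that $\epsilon_1$ is unchanged; the normalisation $c>0$ also keeps $(c\tau+d)/i$ in the right half-plane so that the principal square root is single-valued) and using $T$-translations on both sides to reduce $a$ and $d$ modulo $c$, I would express the transformed product through a finite sum over residues and evaluate it by a quadratic Gauss sum $\sum_{j\bmod c}e^{2\pi i dj^2/c}$. For $c$ odd this sum equals $\JS dc\varepsilon_c\sqrt c$ with $\varepsilon_c\in\{1,i\}$ according to $c\bmod4$, and $\varepsilon_c$ is precisely the factor $i^{(1-c)/2}$ appearing in the first case of $\epsilon_1$; the remaining exponential $e^{\pi i(bd(1-c^2)+c(a+d))/12}$ is the accumulated contribution of the $\pi i\tau/12$ term and the linear part. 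Equivalently, one may first establish the Dedekind-sum form $\nu(\gamma)=\exp(\pi i((a+d)/12c-s(d,c)-1/4))$ in the manner of Rademacher and Apostol, and then convert $s(d,c)$ into the quadratic symbol via the cotangent-sum evaluation of $s(d,c)$ together with quadratic reciprocity.

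The hardest and most error-prone step will be the bookkeeping of roots of unity, for two reasons. First, the automorphy factor $\{(c\tau+d)/i\}^{1/2}$ is a weight-$1/2$ cocycle, so $\nu$ is not a homomorphism: writing $\gamma$ as a word in $S,T$ forces one to track the sign $2$-cocycle coming from the branch of the square root, which is exactly what a raw generator-by-generator argument handles poorly. Second, reconciling the two displayed formulas for $\epsilon_1$ requires care: when both $c$ and $d$ are odd (the only overlap, since $ad-bc=1$ precludes $c,d$ both even) the reciprocity identity $\JS dc=\JS cd(-1)^{(c-1)(d-1)/4}$ together with the congruence $bd(1-c^2)+c(a+d)\equiv ac(1-d^2)+d(b-c+3)\pmod{24}$ must be verified to show the two cases agree. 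I would therefore carry out the calculation in the Dedekind-sum normalisation, where the cocycle is absorbed into the closed form $s(d,c)$, and only translate to the symbol form at the very end; verifying the two congruences modulo $24$ and the quadratic-reciprocity sign is then a finite, if delicate, check.
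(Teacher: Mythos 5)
The paper does not prove this lemma: it is quoted directly from Weber \cite{Weber} (and attributed as such in the lemma heading), and everything that follows uses it as a black box in the proofs of Propositions \ref{proposition: sporadic bimodular} and \ref{prop-hyper}. So there is no internal argument to compare yours against; the relevant benchmark is the classical literature, and there your outline is the standard route: the case $c=0$ from the product expansion, the shape $\eta(\gamma\tau)=\nu(\gamma)\sqrt{(c\tau+d)/i}\,\eta(\tau)$ from $\Delta=\eta^{24}$ together with holomorphy and non-vanishing of $\eta$ on the simply connected $\H$, and the evaluation of the root of unity $\nu(\gamma)$ either by quadratic Gauss sums or by the Dedekind-sum multiplier system plus reciprocity in the Rademacher--Apostol style. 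You also correctly isolate the two genuinely delicate points: the weight-$1/2$ branch cocycle (which is why $\nu$ is not a homomorphism and a naive generator-by-generator induction goes wrong), and the need, when $c$ and $d$ are both odd, to reconcile the two displayed formulas for $\epsilon_1$ via Jacobi-symbol reciprocity together with a congruence modulo $24$.

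Two details of your sketch are, however, incorrect as stated, and the core computation is deferred rather than carried out. First, $\epsilon_1$ is \emph{not} unchanged under $\gamma\mapsto-\gamma$: negating all entries sends $i^{(1-c)/2}$ to $i^{(1+c)/2}$ and $\JS dc$ to $\JS{-d}{-c}$, and this change is forced, because $(c\tau+d)/i$ is reflected across the branch cut of the principal square root, so $\epsilon_1(-\gamma)$ must differ from $\epsilon_1(\gamma)$ by exactly the compensating factor; your parenthetical ``check that $\epsilon_1$ is unchanged'' would fail, and the reduction to $c>0$ has to be phrased as verifying this compensation instead. Second, $i^{(1-c)/2}$ is not the Gauss-sum constant $\varepsilon_c$ ($=1$ or $i$ according to $c\bmod 4$): one has $i^{(1-c)/2}=\JS2c\varepsilon_c$, i.e.\ the two agree only when $c\equiv\pm1\pmod 8$, so the sign $\JS2c=(-1)^{(c^2-1)/8}$ must emerge from the remaining bookkeeping rather than being identified away. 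Neither slip is fatal --- both live exactly in the root-of-unity accounting you yourself flag as error-prone --- but since the Gauss-sum (or Dedekind-sum) evaluation and the mod-$24$ consistency check are precisely where the content of the lemma lies, what you have is a sound plan for reproducing the cited classical result, not yet a proof of it.
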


\begin{proof}[Proof of Proposition \ref{proposition: sporadic bimodular}] When $t$ and $f$ are eta-products and the normalizer
  comes from Atkin-Lehner involutions, it is straightforward to use
  the transformation law for the Dedekind eta function (Lemma
  \ref{lemma: eta}) to verify the bimodular properties of $t$ and
  $F$. For instance, when the group is $\Gamma_0(6)$ and the
  Atkin-Lehner involution is $w_2$, represented by the matrix
  $\SM2{-1}6{-2}$, we have
  \begin{equation*}
  \begin{split}
  \eta\left(\M2{-1}6{-2}\tau\right)
  &=\eta\left(\M1{-1}3{-2}(2\tau)\right)
  =e^{-2\pi i/24}\sqrt{\frac{6\tau-2}i}\eta(2\tau), \\
  \eta\left(2\M2{-1}6{-2}\tau\right)
  &=\eta\left(\M2{-1}3{-1}\tau\right)
  =e^{2\pi i/24}\sqrt{\frac{3\tau-1}i}\eta(\tau), \\
  \eta\left(3\M2{-1}6{-2}\tau\right)
  &=\eta\left(\M1{-3}1{-2}(6\tau)\right)
  =e^{-2\pi i/24}\sqrt{\frac{6\tau-2}i}\eta(6\tau), \\
  \eta\left(6\M2{-1}6{-2}\tau\right)
  &=\eta\left(\M2{-3}1{-1}(3\tau)\right)
  =e^{2\pi i/24}\sqrt{\frac{3\tau-1}i}\eta(3\tau).
  \end{split}
  \end{equation*}
  Thus, for the case $(a,b,c)=(7,2,-8)$, we have
  $$
  t(\tau)\big|w_2=-\frac18\frac{\eta(2\tau)^3\eta(3\tau)^9}
  {\eta(\tau)^3\eta(6\tau)^9}=-\frac1{8t},
  $$
  $$
  f(\tau)\big|w_2=\frac{\sqrt2e^{-2\pi i/4}}{6\tau-2}
  \frac{4(3\tau-1)}i\frac{\eta(\tau)\eta(6\tau)^6}
  {\eta(2\tau)^2\eta(3\tau)^3}
  =-\sqrt 8t(\tau)f(\tau),
  $$
  and
  $$
  F(\tau_1,\tau_2)\big|w_2=\left(1+\frac1{8t(\tau_1)t(\tau_2)}
  \right)8t(\tau_1)t(\tau_2)f(\tau_1)f(\tau_2)
  =F(\tau_1,\tau_2),
  $$
  where, for $\gamma=\SM abcd\in\mathrm{GL}^+(2,\Q)$,
  $F(\tau_1,\tau_2)|\gamma$ is defined to be
  $$
  F(\tau_1,\tau_2)\big|\gamma:=
  \frac{\det\gamma}{(c\tau_1+d)(c\tau_2+d)}
  F(\gamma\tau_1,\gamma\tau_2).
  $$
  Here we omit the proof of the remaining cases where $t$ and $f$ are
  eta-products and the normalizer comes from Atkin-Lehner involutions.

  When the normalizer is not from Atkin-Lehner involutions, the
  computation is a little more complicated. In the case
  $\Gamma=\Gamma_0(8)$ and the normalizer is $\gamma=\SM4184$, we have
  \begin{equation*}
    \begin{split}
      \eta\left(\M4184\tau\right)
      &=\eta\left(\M1021(2\tau+1/2)\right)
      =e^{2\pi i/24}\sqrt{\frac{4\tau+2}i}\eta(2\tau+1/2) \\
      &=e^{2\pi i/16}\sqrt{\frac{4\tau+2}i}
      \frac{\eta(4\tau)^3}{\eta(2\tau)\eta(8\tau)}, \\
      \eta\left(2\M4184\tau\right)
      &=\eta\left(\M1112(4\tau)\right)
      =e^{2\pi i/8}\sqrt{\frac{4\tau+2}i}\eta(4\tau), \\
      \eta\left(4\M4184\tau\right)
      &=\eta\left(\M2111(2\tau)\right)
      =e^{2\pi i/8}\sqrt{\frac{2\tau+1}i}\eta(2\tau), \\
      \eta\left(8\M4184\tau\right)
    &=\eta\left(\M4{-1}10(\tau+1/2)\right)
    =e^{2\pi i/6}\sqrt{\frac{2\tau+1}{2i}}\eta(\tau+1/2) \\
    &=e^{6\pi i/16}\sqrt{\frac{2\tau+1}{2i}}
    \frac{\eta(2\tau)^3}{\eta(\tau)\eta(4\tau)}.
    \end{split}
  \end{equation*}
  From these, we easily deduce the transformation formulas for $t$,
  $f$, and $F$ under $\SM4184$.

  For $\Gamma=\Gamma_0(9)$ with normalizer $\SM{-3}{-2}9{3}$, we have
  \begin{equation*}
    \begin{split}
      \eta\left(\M{-3}{-2}9{3}\tau\right)
      &=\eta\left(\M{-1}03{-1}(\tau+2/3)\right)
      =\sqrt{\frac{3\tau+1}i}\eta(\tau+2/3), \\
      \eta\left(3\M{-3}{-2}9{3}\tau\right)
      &=\eta\left(\M{-1}{-2}1{1}(3\tau)\right)
      =\sqrt{\frac{3\tau+1}i}\eta(3\tau), \\
      \eta\left(9\M{-3}{-2}9{3}\tau\right)
      &=\eta\left(\M{-3}{-1}10(\tau+1/3)\right)
      =e^{-2\pi i/8}\sqrt{\frac{\tau+1/3}i}\eta(\tau+1/3).
    \end{split}
  \end{equation*}
  This yields the transformation formulas for $t$, $f$, and $F$ under
  $\SM{-3}{-2}93$.

  The case $(11,3,-1)$ is more complicated. For integers $g$ and $h$
  not congruent to $0$ modulo $5$ simultaneously, define the
  generalized Dedekind eta function $E_{g,h}(\tau)$ by
  $$
  E_{g,h}(\tau):=q^{B(g/5)/2}\prod_{m=1}^\infty
  \left(1-\zeta^hq^{m-1+g/5}\right)
  \left(1-\zeta^{-h}q^{m-g/5}\right), \quad \zeta=e^{2\pi i/5},
  $$
  where $B(x)=x^2-x+1/6$ is the second Bernoulli polynomial. They are
  modular functions on some congruence subgroup of $\SL(2,\Z)$.
  Then the modular function $t(\tau)$ and the modular form $f(\tau)$
  can be written as
  $$
  t(\tau)=\frac{E_{1,0}(5\tau)^5}{E_{2,0}(5\tau)^5}, \qquad
  f(\tau)=\frac{\eta(5\tau)^2E_{2,0}(5\tau)^2}{E_{1,0}(5\tau)^3},
  $$
  respectively. Using transformation laws for generalized Dedekind eta
  functions \cite[Theorem 1]{Yang-Dedekind}, we find that
  \begin{equation*}
    \begin{split}
      E_{1,0}\left(5\M2{-1}5{-2}\tau\right)
      &=E_{1,0}\left(\M2{-5}1{-2}(5\tau)\right)
      =ie^{-2\pi i/5}E_{2,0}(5\tau), \\
      E_{2,0}\left(5\M2{-1}5{-2}\tau\right)
      &=E_{2,0}\left(\M2{-5}1{-2}(5\tau)\right)
      =-ie^{2\pi i/5}E_{1,0}(5\tau), \\
      \eta\left(5\M2{-1}5{-2}\tau\right)
      &=\eta\left(\M2{-5}1{-2}(5\tau)\right)
      =\sqrt{\frac{5\tau-2}i}\eta(5\tau).
    \end{split}
  \end{equation*}
  It follows that
  \begin{equation*}
    \begin{split}
  t(\tau)\Big|\M2{-1}5{-2}&=-\frac{E_{2,0}(5\tau)^5}{E_{1,0}(5\tau)^5}
  =-\frac1{t(\tau)}, \\
  f(\tau)\Big|\M2{-1}5{-2}
  &=-\frac{\eta(5\tau)^2E_{1,0}(5\tau)^2}{E_{2,0}(5\tau)^3}
  =-t(\tau)f(\tau),
    \end{split}
  \end{equation*}
  and
  $$
  F(\tau_1,\tau_2)\Big|\M2{-1}5{-2}=F(\tau_1,\tau_2).
  $$
  Also,
  \begin{equation*}
    \begin{split}
      E_{1,0}\left(5\M0{-1}50\tau\right)
      &=E_{1,0}\left(\M0{-1}10\tau\right)
      =\frac{e^{2\pi i/10}}iE_{0,-1}(\tau), \\
      E_{2,0}\left(5\M0{-1}50\tau\right)
      &=E_{2,0}\left(\M0{-1}10\tau\right)
      =\frac{e^{2\pi i/5}}iE_{0,-2}(\tau), \\
      \eta\left(5\M0{-1}50\tau\right)
      &=\eta\left(\M0{-1}10\tau\right)
      =\sqrt{\frac\tau i}\eta(\tau).
    \end{split}
  \end{equation*}
  Hence,
  \begin{equation*}
    \begin{split}
      t(\tau)\Big|\M0{-1}50&=-\frac{E_{0,-1}(\tau)^5}{E_{0,-2}(\tau)^5}
      =\frac{1-\alpha t(\tau)}{\alpha(1-\beta t(\tau))}, \\
      f(\tau)\Big|\M0{-1}50&=\frac{e^{2\pi i/10}}{\sqrt5}
      \frac{\eta(\tau)^2E_{0,-2}(\tau)^2}{E_{0,-1}(\tau)^3}
      =-i\sqrt{\frac\alpha{5\sqrt5}}(1-\beta t(\tau))f(\tau),
    \end{split}
  \end{equation*}
  and
  $$
  F(\tau_1,\tau_2)\Big|\M0{-1}50=-F(\tau_1,\tau_2),
  $$
  where $\alpha=(11+5\sqrt5)/2$ and $\beta=(11-5\sqrt5)/2$.
\end{proof}

We now determine the partial differential equations satisfied by $x$, $y$, and $F$.

\begin{Proposition} \label{proposition: sporadic PDEs}
  Let $x$, $y$, and $F$ be given as in Proposition \ref{proposition: sporadic bimodular}. Then $F$, as a function of $x$ and $y$, satisfies the system of partial differential equations
  \begin{equation} \label{equation: PDEs}
    \begin{split}
      &\theta_x(\theta_x-2\theta_y)F-x(a\theta_x^2+a\theta_x+b)F
       +cx^2(\theta_x+1)^2F\\
      &\qquad+cx^2(\theta_x+1)(4y\theta_x+(2-8y)\theta_y)F=0, \\
      &\theta_y^2F-y(2\theta_y-\theta_x+1)(2\theta_y-\theta_x)F=0.
    \end{split}
  \end{equation}
\end{Proposition}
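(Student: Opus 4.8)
The plan is to reduce the entire statement to the one-variable recurrence \eqref{equation: un} by exploiting the Wan--Zudilin identity \eqref{equation: WZ identity}, rather than by computing the change of variables from $(t_1,t_2)$ to $(x,y)$ directly. The bimodularity established in Proposition \ref{proposition: sporadic bimodular}, together with Theorem 2.1 of \cite{Yang-Yui}, is what guarantees a priori that $F$ satisfies \emph{some} such system with algebraic coefficients; my strategy is to pin down those coefficients by working with the explicit power-series expansion of $F$ in $x$ and $y$, which \eqref{equation: WZ identity} hands us for free.

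First I would set $X=t(\tau_1)$ and $Y=t(\tau_2)$ in \eqref{equation: WZ identity}. By the definitions of $x$, $y$, and $F$ in Proposition \ref{proposition: sporadic bimodular}, this yields the analytic identity
$$
F(\tau_1,\tau_2)=\sum_{n=0}^\infty\sum_{m=0}^{\lfloor n/2\rfloor}v_{n,m}\,x^ny^m,\qquad v_{n,m}=u_n\binom{2m}m\binom n{2m},
$$
valid near the cusp where $t_1,t_2\to0$ (so $x,y\to0$). Because $x$ and $y$ are algebraically independent bimodular functions, they form a local analytic coordinate system near that point, so it suffices to check that the two operators in \eqref{equation: PDEs} annihilate this double series. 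Since $\theta_x$ and $\theta_y$ act on $x^ny^m$ as multiplication by $n$ and $m$, applying either operator and collecting the coefficient of a fixed monomial $x^ny^m$ turns each equation into a recurrence among the $v_{n,m}$.

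The second equation is the easy one. Reading off the coefficient of $x^ny^m$ converts $\theta_y^2F=y(2\theta_y-\theta_x+1)(2\theta_y-\theta_x)F$ into $m^2v_{n,m}=(2m-n-1)(2m-n-2)v_{n,m-1}$; since $(2m-n-1)(2m-n-2)=(n-2m+1)(n-2m+2)$, this is immediate from the purely binomial ratio $v_{n,m}/v_{n,m-1}=(n-2m+1)(n-2m+2)/m^2$ and uses nothing about the sequence $u_n$. For the first equation I would apply the operator monomial by monomial and shift indices ($n\to n-1$ in the $x$-term, $n\to n-2$ in the two $cx^2$-terms, and $n\to n-2$, $m\to m-1$ in the last term), arriving at
$$
n(n-2m)v_{n,m}-(a(n-1)^2+a(n-1)+b)v_{n-1,m}+c[(n-1)^2+2m(n-1)]v_{n-2,m}+c(4n-8m)(n-1)v_{n-2,m-1}=0.
$$

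The main obstacle is verifying this last recurrence, and the key is that its $m$-dependence collapses. Writing $v_{n',m'}=u_{n'}w_{n',m'}$ with $w_{n',m'}=\binom{2m'}{m'}\binom{n'}{2m'}$, I would divide by $w_{n,m}$ and insert the elementary binomial ratios $w_{n-1,m}/w_{n,m}=(n-2m)/n$, $w_{n-2,m}/w_{n,m}=(n-2m)(n-2m-1)/(n(n-1))$, and $w_{n-2,m-1}/w_{n,m}=m^2/(n(n-1))$. After multiplying by $n$ and factoring out $(n-2m)$ (the case $n=2m$ being trivial, since then every term in the displayed recurrence vanishes separately), the remaining bracket becomes
$$
n^2u_n-(a(n-1)^2+a(n-1)+b)u_{n-1}+c\big((n-1+2m)(n-2m-1)+4m^2\big)u_{n-2}.
$$
Now the identity $(n-1+2m)(n-2m-1)+4m^2=(n-1)^2$ removes $m$ entirely, leaving exactly the defining recurrence \eqref{equation: un} with $n$ replaced by $n-1$. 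Thus the first equation is equivalent to \eqref{equation: un}, and spotting this collapse is the one genuinely non-routine step; the rest is bookkeeping with Pochhammer-type ratios.
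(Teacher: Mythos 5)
Your proof is correct: the recurrences you extract are the right ones (I checked the coefficient bookkeeping, including the splitting of the $cx^2(\theta_x+1)(4y\theta_x+(2-8y)\theta_y)$ term into the $v_{n-2,m}$ and $v_{n-2,m-1}$ contributions and the collapse $(n-1+2m)(n-2m-1)+4m^2=(n-1)^2$), and the degenerate case $n=2m$ is handled properly. However, your route is genuinely different from the paper's, and in a way that matters for the paper's logical architecture. The paper proves the Proposition \emph{without} the Wan--Zudilin identity: it invokes Theorem 2.1 of \cite{Yang-Yui} to write down a general system $\theta_x^2F+r_0\theta_x\theta_yF+\cdots=0$, $\theta_y^2F+s_0\theta_x\theta_yF+\cdots=0$, computes the eight coefficients $r_i,s_i$ explicitly as rational functions of $x$ and $y$ from the logarithmic derivatives $G_{t_j},G_{f_j}$ and the relations \eqref{equation: Gt Gf}, and then clears denominators and takes a linear combination to reach \eqref{equation: PDEs}. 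This is deliberate: Corollary \ref{cor-WZ-id} then \emph{derives} the identity \eqref{equation: WZ identity} from the Proposition by a uniqueness-of-holomorphic-solution argument. Your proof takes \eqref{equation: WZ identity} as input, so if it were substituted for the paper's, Corollary \ref{cor-WZ-id} would become circular; it is only legitimate because Wan and Zudilin's original proof (and Chan's) exist independently. Two smaller remarks: your opening paragraph about bimodularity and Theorem 2.1 of \cite{Yang-Yui} plays no role in your actual argument, which is purely combinatorial once one knows $f(\tau)=\sum_n u_nt(\tau)^n$ and \eqref{equation: WZ identity}; and the assertion that $(x,y)$ is a local coordinate system ``near that point'' deserves more care, since $y=t_1t_2(1+O(t))/(t_1+t_2+O(t^2))^2$ does not tend to $0$ as $(t_1,t_2)\to(0,0)$ --- the cleaner statement is that the double series defines an analytic function $\tilde F(x,y)$ of two independent variables on its domain of convergence, your recurrences show $\tilde F$ satisfies the system, and \eqref{equation: WZ identity} identifies $F$ with $\tilde F\circ(x,y)$. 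What your approach buys is elementarity and a transparent reduction to the defining recursion \eqref{equation: un}; what the paper's buys is independence from \eqref{equation: WZ identity} and hence a genuinely new proof of that identity.
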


\begin{proof}
For $j=1,2$, set $q_j=e^{2\pi i \tau_j}$,
$$
G_{x,j}=\frac{\theta_{q_j}x}x, \quad
G_{y,j}=\frac{\theta_{q_j}y}y, \quad
G_{F,j}=\frac{\theta_{q_j}F}F,
$$
and
$$
G_{t_j}=\frac{\theta_{q_j}t_j}{t_j}, \quad
G_{f_j}=\frac{\theta_{q_j}f_j}{f_j}.
$$
By Theorem 1 of \cite{Yang-ModDiff} or by verifying directly, we have
\begin{equation} \label{equation: Gt Gf}
\frac{\theta_{q_j}G_{t_j}-2G_{t_j}G_{f_j}}{G_{t_j}^2}
=\frac{-at_j+2ct_j^2}{1-at_j+ct_j^2}, \quad
\frac{\theta_{q_j}G_{f_j}-G_{f_j}^2}{G_{t_j}^2}
=\frac{bt_j-ct_j^2}{1-at_j+ct_j^2}
\end{equation}
in all cases.
According to Theorem 2.1 of \cite{Yang-Yui}, $F$, as a function of $x$ and $y$, satisfies
\begin{equation} \label{equation: DE coeffs}
  \begin{split}
  \theta_x^2F+r_0\theta_x\theta_yF+r_1\theta_xF+r_2\theta_yF+r_3F=0,\\
  \theta_y^2F+s_0\theta_x\theta_yF+s_1\theta_xF+s_2\theta_yF+s_3F=0,
\end{split}
\end{equation}
where, using \eqref{equation: Gt Gf} and the chain rule,
\begin{equation*}
  \begin{split}
  r_0&=\frac{2G_{y,1}G_{y,2}}{G_{x,1}G_{y,2}+G_{y,1}G_{x,2}}
  =-\frac{2(1-cx^2+4cx^2y)}{1-ax+cx^2+4cx^2y}, \\
  s_0&=\frac{2G_{x,1}G_{x,2}}{G_{x,1}G_{y,2}+G_{y,1}G_{x,2}}
  =\frac{2y(1-2ax+3cx^2+4cx^2y)}{(1-ax+cx^2+4cx^2y)(1-4y)},
\end{split}
\end{equation*}
  \begin{equation*}
    \begin{split}
    r_1&=\frac{G_{y,2}^2(\theta_{q_1}G_{x,1}-2G_{F,1}G_{x,1})
      -G_{y,1}^2(\theta_{q_2}G_{x,2}-2G_{F,2}G_{x,2})}
    {G_{x,1}^2G_{y,2}^2-G_{y,1}^2G_{x,2}^2} \\
    &=\frac{-ax+2cx^2+4cx^2y}{1-ax+cx^2+4cx^2y}, \\
    s_1&=\frac{-G_{x,2}^2(\theta_{q_1}G_{x,1}-2G_{F,1}G_{x,1})
      +G_{x,1}^2(\theta_{q_2}G_{x,2}-2G_{F,2}G_{x,2})}
    {G_{x,1}^2G_{y,2}^2-G_{y,1}^2G_{x,2}^2} \\
    &=\frac{y(1-2ax+3cx^2+8cx^2y)}{(1-ax+cx^2+4cx^2y)(1-4y)},
  \end{split}
  \end{equation*}
  \begin{equation*}
    \begin{split}
    r_2&=\frac{G_{y,2}^2(\theta_{q_1}G_{y,1}-2G_{F,1}G_{y,1})
      -G_{y,1}^2(\theta_{q_2}G_{y,2}-2G_{F,2}G_{y,2})}
    {G_{x,1}^2G_{y,2}^2-G_{y,1}^2G_{x,2}^2}\\
    &=\frac{2cx^2(1-4y)}{1-ax+cx^2+4cx^2y}, \\
    s_2&=\frac{-G_{x,2}^2(\theta_{q_1}G_{y,1}-2G_{F,1}G_{y,1})
      +G_{x,1}^2(\theta_{q_2}G_{y,2}-2G_{F,2}G_{y,2})}
    {G_{x,1}^2G_{y,2}^2-G_{y,1}^2G_{x,2}^2} \\
    &=\frac{2y(-1+ax-8cx^2y)}{(1-ax+cx^2+4cx^2y)(1-4y)},
  \end{split}
  \end{equation*}
  \begin{equation*}
    \begin{split}
    r_3&=-\frac{G_{y,2}^2(\theta_{q_1}G_{F,1}-G_{F,1}^2)
      -G_{y,1}^2(\theta_{q_2}G_{F,2}-G_{F,2}^2)}
      {G_{x,1}^2G_{y,2}^2-G_{y,1}^2G_{x,2}^2} \\
    &=\frac{-bx+cx^2}{1-ax+cx^2+4cx^2y}, \\
    s_3&=-\frac{-G_{x,2}^2(\theta_{q_1}G_{F,1}-G_{F,1}^2)
      +G_{x,1}^2(\theta_{q_2}G_{F,2}-G_{F,2}^2)}
    {G_{x,1}^2G_{y,2}^2-G_{y,1}^2G_{x,2}^2} \\
    &=\frac{y(-bx+cx^2)}{(1-ax+cx^2+4cx^2y)(1-4y)}.
    \end{split}
  \end{equation*}
Clearing the denominator of the first equation in \eqref{equation: DE coeffs}, we get the first equation in \eqref{equation: PDEs}. Multiplying the two equations in \eqref{equation: DE coeffs} by $-y$ and $1-4y$, respectively, and adding them together, we get the second equation in \eqref{equation: PDEs}.
\end{proof}

Using this proposition, one can give a new proof of Wan and Zudilin's identity \eqref{equation: WZ identity}.

\begin{Corollary}\label{cor-WZ-id}
The identity \eqref{equation: WZ identity} holds for all sporadic Ap\'ery-like sequences $\{u_n\}$.
\end{Corollary}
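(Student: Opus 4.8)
The plan is to expand $F$ as a power series in the bimodular functions $x$ and $y$ and to show, via the system \eqref{equation: PDEs}, that the coefficient of $x^ny^m$ is exactly $u_n\binom{2m}m\binom n{2m}$; since the definition of $F$ already makes it equal to the right-hand side of \eqref{equation: WZ identity}, this will yield the identity. The argument is uniform over all six sporadic cases, since \eqref{equation: Gt Gf} and Proposition \ref{proposition: sporadic PDEs} hold uniformly.

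First I would record the one-variable content of \eqref{equation: Gt Gf}: those two relations say precisely that $f$, as a function of $t$, is annihilated by the operator in \eqref{equation: DE for un}. In each sporadic case $t=q+O(q^2)$ and $f=1+O(q)$ at the cusp $\tau\to i\infty$, so $f(\tau)$ is the unique holomorphic solution of \eqref{equation: DE for un} normalized to $1$ at $t=0$, whence $f(\tau)=\sum_{n\ge0}u_nt(\tau)^n$. Writing $X=t_1$ and $Y=t_2$, the definition of $F$ then reads
\[
F(\tau_1,\tau_2)=(1-ct_1t_2)f(\tau_1)f(\tau_2)=(1-cXY)\Big(\sum_{n\ge0}u_nX^n\Big)\Big(\sum_{n\ge0}u_nY^n\Big),
\]
which is exactly the right-hand side of \eqref{equation: WZ identity}.

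Next I would expand the left-hand side. Both $F$ and the functions in \eqref{equation: Apery xy} are symmetric under $\tau_1\leftrightarrow\tau_2$, and one checks from \eqref{equation: Apery xy} that $x$ and $x^2y$ are holomorphic at $(t_1,t_2)=(0,0)$ with $x=t_1+t_2+\cdots$ and $x^2y=t_1t_2+\cdots$; hence $(x,x^2y)$ are local coordinates playing the role of the elementary symmetric functions of $t_1,t_2$, and $F$ admits a \emph{balanced} expansion $F=\sum_n\sum_{m=0}^{\lfloor n/2\rfloor}b_{n,m}x^ny^m$ near the cusp, with $b_{0,0}=F(0,0)=1$. (This is the delicate point: $y$ alone is not holomorphic at the cusp, only the combinations $x^ny^m$ with $m\le n/2$ are, which is why the expansion is balanced.) Substituting this into \eqref{equation: PDEs}, where $\theta_x$ and $\theta_y$ act on $x^ny^m$ by multiplication by $n$ and $m$, I would read off recurrences coefficientwise. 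The second equation gives $m^2b_{n,m}=(n-2m+1)(n-2m+2)b_{n,m-1}$, whose solution is $b_{n,m}=b_{n,0}\binom{2m}m\binom n{2m}$. The first equation, restricted to $m=0$ (where its last, $y$-dependent group of terms drops out), gives $n^2b_{n,0}=(an^2-an+b)b_{n-1,0}-c(n-1)^2b_{n-2,0}$, which after the shift $n\mapsto n+1$ is exactly the Ap\'ery-like recursion \eqref{equation: un}; together with $b_{0,0}=1$ this forces $b_{n,0}=u_n$.

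Combining the two recurrences gives $b_{n,m}=u_n\binom{2m}m\binom n{2m}$, so $F$ equals the left-hand side of \eqref{equation: WZ identity} whenever $X=t(\tau_1)$ and $Y=t(\tau_2)$. Finally, since $t(\tau)=q+\cdots$ is a local biholomorphism at the cusp, the pairs $(t_1,t_2)$ sweep out an open subset of $\C^2$ around the origin; as both sides of \eqref{equation: WZ identity} are analytic in $(X,Y)$, the identity extends to a full neighborhood of $(0,0)$ and hence holds identically. The main obstacle is the expansion step—justifying that $F$ genuinely has the balanced form in $x,y$ and that the recurrences drawn from \eqref{equation: PDEs} pin down every coefficient from $b_{0,0}$ alone. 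What makes this clean is that only the $m=0$ slice of the first PDE is needed, where it collapses precisely to \eqref{equation: un}, while the second PDE reproduces the Legendre/$T_n$ binomial factor.
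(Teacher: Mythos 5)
Your proposal is correct and follows essentially the same route as the paper: Proposition \ref{proposition: sporadic PDEs} shows that $F=(1-ct_1t_2)f(\tau_1)f(\tau_2)$ satisfies \eqref{equation: PDEs}, and the identity follows because that system has a unique holomorphic solution near $(0,0)$ up to scalars, which your coefficient recurrences (the second PDE giving $m^2b_{n,m}=(n-2m+1)(n-2m+2)b_{n,m-1}$ and the $m=0$ slice of the first giving \eqref{equation: un}) pin down explicitly. You have simply carried out in detail the uniqueness check that the paper's proof states in one line.
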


\begin{proof} We check that the system of partial differential equations in \eqref{equation: PDEs} has only one solution holomorphic near $(0,0)$, up to scalars, and that the left-hand side of \eqref{equation: WZ identity} is a solution with this property.
  In view of Proposition \ref{proposition: sporadic PDEs}, this implies that \eqref{equation: WZ identity} holds for all sporadic Ap\'ery-like sequence $\{u_n\}$.
\end{proof}

Using bimodular forms and arithmetic properties of CM-points, we produce many new two-variable $1/\pi$-series with rational arguments. As mentioned in the introduction section, most of the series are coming from imaginary quadratic orders with class groups being isomorphic to an elementary $2$-group of order $8$.

\begin{Theorem} Let $\{u_n\}$ be one of the six sporadic Ap\'ery-like
  sequences with parameters $(a,b,c)$. Then we have
\begin{align}\label{series-form}
  \sum_{n=0}^\infty\sum_{m=0}^{\lfloor n/2\rfloor}u_n
  \binom n{2m}\binom{2m}m(An+B)x^ny^m=\frac C\pi
\end{align}
  for $A$, $B$, $C$, $x$, and $y$ given in Tables
  \ref{table: 6-1}--\ref{table: 8} in Appendix \ref{section: sporadic
    data}.
\end{Theorem}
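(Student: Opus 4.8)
The plan is to deduce the $1/\pi$-series from the bimodular identity already in hand by applying a suitable first-order differential operator and then specializing to CM-points. First I would rewrite the left-hand side of \eqref{series-form}. Let $\Phi(x,y)=\sum_{n,m}u_n\binom n{2m}\binom{2m}m x^ny^m$ be the power series on the left of \eqref{equation: WZ identity}, and recall from Corollary \ref{cor-WZ-id} (together with Proposition \ref{proposition: sporadic bimodular}) that $\Phi(x(\tau_1,\tau_2),y(\tau_1,\tau_2))=F(\tau_1,\tau_2)$. Since $\theta_x=x\,\partial/\partial x$ acts on $x^ny^m$ by multiplication by $n$, applying $A\theta_x+B$ to $\Phi$ produces exactly the summand $(An+B)u_n\binom n{2m}\binom{2m}m x^ny^m$. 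Hence the theorem is equivalent to the assertion that, at the CM-points $\tau_1^0,\tau_2^0$ underlying each table entry,
\[
\bigl(A\theta_x\Phi+B\Phi\bigr)\big|_{(x,y)=(x_0,y_0)}=\frac C\pi,
\]
where $x_0=x(\tau_1^0,\tau_2^0)$ and $y_0=y(\tau_1^0,\tau_2^0)$ are the tabulated (rational) values.

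Next I would convert $\theta_x\Phi$ into honest modular data by the chain rule used in the proof of Proposition \ref{proposition: sporadic PDEs}. With $G_{x,j}=\theta_{q_j}x/x$, $G_{y,j}=\theta_{q_j}y/y$, and $\Delta=G_{x,1}G_{y,2}-G_{y,1}G_{x,2}$, solving the two relations $\theta_{q_j}F=(\theta_x\Phi)G_{x,j}+(\theta_y\Phi)G_{y,j}$ gives $\theta_x\Phi=(G_{y,2}\,\theta_{q_1}F-G_{y,1}\,\theta_{q_2}F)/\Delta$. Because $F$ has weight $1$ in each variable, I would replace the holomorphic derivatives by the Maass--Shimura completions $\theta_{q_j}F=\partial_jF+\frac1{4\pi y_j}F$, where $y_j=\Im\tau_j$ and $\partial_jF$ is nearly holomorphic of weight $3$ in $\tau_j$ and weight $1$ in the other variable. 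This decomposes
\[
\theta_x\Phi=\frac{G_{y,2}\,\partial_1F-G_{y,1}\,\partial_2F}{\Delta}
+\frac1{4\pi}\,\frac{G_{y,2}/y_1-G_{y,1}/y_2}{\Delta}\,F,
\]
the first summand being nearly holomorphic bimodular of weight $1$ in each variable and the second carrying the factor $1/\pi$.

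The heart of the argument is then the arithmetic of CM-values. I would take $\tau_1^0,\tau_2^0$ to be CM-points of the same discriminant $d$ whose class group is (for most entries) an elementary $2$-group of order $8$; the symmetries recorded in Proposition \ref{proposition: sporadic bimodular} together with the interchange $\tau_1\leftrightarrow\tau_2$ force $x_0,y_0$ to be rational. Writing $\Omega_1,\Omega_2$ for the Chowla--Selberg periods of $\tau_1^0,\tau_2^0$, Shimura's theorem on CM-values of nearly holomorphic modular forms defined over $\overline\Q$ yields, up to algebraic factors, $F\sim\Omega_1\Omega_2$, $\partial_1F\sim\Omega_1^3\Omega_2$, $\partial_2F\sim\Omega_1\Omega_2^3$, and $\theta_{q_j}x,\theta_{q_j}y,G_{x,j},G_{y,j}\sim\Omega_j^2$ (these last being honest weight-$2$ forms, as derivatives of weight-$0$ functions), while $1/y_j$ is an algebraic multiple of $1/\sqrt{|d|}$. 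Substituting these into the previous display, the first summand of $\theta_x\Phi$ and the term $B\Phi$ are both algebraic multiples of $\Omega_1\Omega_2$, whereas the second summand is an algebraic multiple of $\frac1\pi\cdot\frac1{\sqrt{|d|}}\bigl(\Omega_2/\Omega_1-\Omega_1/\Omega_2\bigr)$. Since $\tau_1^0$ and $\tau_2^0$ share the discriminant $d$, the ratio $\Omega_2/\Omega_1$ is algebraic, so this last quantity is an algebraic multiple of $1/\pi$. Requiring the $\Omega_1\Omega_2$-part of $A\theta_x\Phi+B\Phi$ to vanish fixes the ratio $B/A$, and the surviving $1/\pi$-part is the desired $C/\pi$ with $C$ algebraic.

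The main obstacle is the explicit evaluation that turns this structural statement into the exact table entries. For each sporadic case and each discriminant one must (i) locate the CM-points and compute the rational values $x_0,y_0$, confirming that the class-group and symmetry configuration (including the $C_4\times C_2$ exceptions) indeed makes them rational; (ii) evaluate the relevant weight-$2$ forms and the completions $\partial_jF$ at the CM-points, for instance through known CM-values of the eta- and generalized-eta-products defining $t$ and $f$, or by matching against an already established one-variable Ramanujan series; and (iii) extract the precise algebraic numbers $A$, $B$, $C$, verifying that the cancellation of the $\Omega_1\Omega_2$-part is consistent with the tabulated ratio $A:B$. Steps (ii) and (iii) are the genuinely laborious parts, while the existence of the $1/\pi$-relation and the algebraicity of $C$ are guaranteed by the period computation above.
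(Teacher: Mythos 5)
Your proposal is correct in outline and shares the paper's skeleton (apply $A\theta_x+B$ to the Wan--Zudilin/bimodular identity, specialize at CM-points generating the same imaginary quadratic field, and use the Atkin--Lehner and Galois symmetries to force $x_0,y_0\in\Q$), but the step that actually produces $1/\pi$ and the algebraicity of the constants is handled by a genuinely different mechanism. You complete $\theta_{q_j}F$ to the Maass--Shimura derivative $\partial_jF$ of the weight-$(1,1)$ bimodular form $F$ itself, read off the $1/\pi$ from the correction term $F/(4\pi\Im\tau_j)$, and then invoke Shimura's general algebraicity theorem for CM-values of arithmetic nearly holomorphic forms together with the algebraicity of the period ratio $\Omega_1/\Omega_2$. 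The paper instead proves a precise one-variable identity (Lemma \ref{lemma: one variable pi}): writing $g=\theta_qt$ and $h=(g|_2\alpha)/g$ with $\alpha=\phi(\sqrt d)$ the CM endomorphism, one has $(\theta_qg/g)(\tau_d)-\tfrac12(\theta_qh)(\tau_d)=1/(2\pi\Im\tau_d)$, and feeding this into the differentiated identity yields closed-form expressions for $B$ and $C$ (Theorem \ref{theorem: general}) in terms of $t_j$, $\epsilon=f(\tau_2)/f(\tau_1)$, and $\delta_j=(\theta_qh_j/f^2)(\tau_j)$, each of which is then pinned down as an explicit algebraic number via Hecke-operator and integrality arguments (Section \ref{section: pi series}). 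Your route buys a quick structural proof that \emph{some} algebraic $A:B:C$ exists, but it leaves the actual constants --- including the nontrivial fact that $B/A$ is rational rather than merely algebraic for the tabulated cases --- entirely to a separate computation, for which one essentially has to reconstruct the paper's explicit quantities anyway; the paper's route is heavier to set up but is what makes the verification of the individual table entries feasible.
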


The proof will be given in Section \ref{section: pi series}.

\begin{Remark}
Using the same techniques, we also obtain many $2$-variable
$1/\pi$-series of the form \eqref{series-form} for
$u_n=(a)_n(1-a)_n/(n!)^2$, $a\in\{1/2,1/3,1/4\}$. They are given in
Tables \ref{table:a=2}--\ref{table:a=4} in Appendix
\ref{section:hyper-data}. They include all series conjectured or
proved in \cite{CWZ,Sun-list}, along with many new series missed by
\cite{CWZ,Sun-list}.
\end{Remark}

Note that for each $1/\pi$-series in the tables, there is a companion
series of the form
\begin{align}\label{spo-companion}
\sum_{n=0}^\infty\sum_{m=0}^{\lfloor n/2\rfloor}u_n
\binom{2m}m\binom n{2m}(A'm+B')x^ny^m=\frac{C'}\pi
\end{align}
(see Theorem \ref{theorem: general}). For example, for the case $(a,b,c)=(7,2,-8)$, there are series of the form \eqref{spo-companion} for
\begin{align*}
(x,y,A',B',C')=(47/441,1/47^2, 2835, 172, 402\sqrt{5}), \\
 (-97/1176, 1/994^2, 1164240, 43269, 53627\sqrt{5}), \,\, \text{etc.}
\end{align*}
Similarly, for $u_n=(1/2)_n^2/(n!)^2$, there are series of the form
\eqref{spo-companion} for
$$(x,y,A',B',C')=(-1/16, 16, 105, 12, 44), \quad (-17/32, 1/34^2, 240, 11, 31) \,\, \text{etc.}$$
However, in general the constants $A'$, $B'$, and $C'$ are much more
complicated (see the discussion following Theorem \ref{theorem:
  general}), so we will not list them in the paper.

\section{Bimodular properties of other known two-variable $1/\pi$-series}

In Sections 2 and 3 of \cite{Sun-list}, Sun listed several families of
conjectural $1/\pi$-series of the form
$$
\sum_{n=0}^\infty(An+B)\sum_{m=0}^n a_{n,m}x^ny^m=\frac C\pi
$$
for some coefficients $a_{n,m}$ expressible in terms of binomial
coefficients, some rational numbers $A$, $B$, $x$, $y$, and some
algebraic number $C$ (Conjectures 2, 3(ii), 3(iii), 6(i), 6(ii), and
I--VII). A close inspection suggests that $x$ and $y$ in several
families, including those in Conjectures 2, 6(i), and 6(ii), are
related by algebraic functions. In addition, Zudilin \cite{Zudilin-T2}
found that $x$ and $y$ in Conjecture VII are parameterized by a single
modular function on $\Gamma_0(7)$. Those series should be regarded as
one-variable $1/\pi$-series and will not be discussed here. The
bimodular properties of the remaining case are described below.
\medskip

\paragraph{\bf Series in Conjectures I-III} The series in Sun's Conjectures
I-III are of the form
$$
\sum_{n=0}^\infty\frac{(a)_n(1-a)_n}{(n!)^2}T_n(b,c)z^n
=\sum_{n=0}^\infty\frac{(a)_n(1-a)_n}{(n!)^2}T_n(bz,cz^2)
$$
with $a=1/2$,
$1/3$, and $1/4$, respectively. As mentioned in the introduction
section, they can also be written as
$$
\sum_{n=0}^\infty\sum_{m=0}^{\lfloor n/2\rfloor}
\frac{(a)_n(1-a)_n}{(n!)^2}\binom n{2m}\binom{2m}mx^ny^m,
\quad
x=bz,~y=\frac c{b^2}.
$$
The bimodular properties of these series were already studied in
\cite[Theorem 4.1]{Yang-Yui}.

\begin{Proposition}\label{prop-hyper}
  For $a=1/2$, let
  $$
  t(\tau)=\frac{\theta_2(\tau)^4}{\theta_3(\tau)^4}, \qquad
  f(\tau)=\theta_4(\tau)^2,
  $$
  where
  $$
  \theta_2(\tau)=\sum_{n\in\Z}q^{(n+1/2)^2}, \quad
  \theta_3(\tau)=\sum_{n\in\Z}q^{n^2}, \quad
  \theta_4(\tau)=\sum_{n\in\Z}(-1)^nq^{n^2}
  $$
  are the Jacobi theta functions.
  For $a=1/3$, let
  $$
  t(\tau)=-27\frac{\eta(3\tau)^{12}}{\eta(\tau)^{12}}, \qquad
  f(\tau)=\sum_{m,n\in\Z}q^{m^2+mn+n^2}.
  $$
  For $a=1/4$, let
  $$
  t(\tau)=-64\frac{\eta(2\tau)^{24}}{\eta(\tau)^{24}}, \qquad
  f(\tau)=(2E_2(2\tau)-E_2(\tau))^{1/2},
  $$
  where $E_2(\tau)=1-24\sum_{n=1}^\infty nq^n/(1-q^n)$ is the
  Eisenstein series of weight $2$ on $\SL(2,\Z)$.
  Set $t_1=t(\tau_1)$, $t_2=t(\tau_2)$,
  $$
  x(\tau_1,\tau_2)=-\frac{t_1+t_2}{(1-t_1)(1-t_2)}, \quad
  y(\tau_1,\tau_2)=\frac{t_1t_2}{(t_1+t_2)^2}, \quad
  F(\tau_1,\tau_2)=f(\tau_1)f(\tau_2).
  $$
  Then we have
  \begin{equation} \label{equation: hypergeometric cases}
  F(\tau_1,\tau_2)=\sum_{n=0}^\infty\sum_{m=0}^{\lfloor n/2\rfloor}
  \frac{(a)_n(1-a)_n}{(n!)^2}\binom n{2m}\binom{2m}mx^ny^m,
  \end{equation}
  and it satisfies the system of partial differential equations in
  \eqref{equation: hypergeometric PDE}.

  Moreover, for $a=1/2$, $x$ and $y$ are bimodular functions and $F$
  is a bimodular form of weight $1$ on
  $(\Gamma_0(4),\Gamma_0(4)+w)$, $w=\SM1021$, with characters
  $$
  \chi_1\left(\M\ast\ast cd\right)=(-1)^{c/4}\JS{-4}d, \qquad
  \chi_2(w)=-1.
  $$
  For $a=1/3$, $x$ and $y$ are bimodular functions and $F$ is a
  bimodular form of weight $1$ on $(\Gamma_0(3),\Gamma_0(3)+w_3)$ with
  characters
  $$
  \chi_1\left(\M\ast\ast\ast d\right)=\JS{-3}d, \qquad
  \chi_2(w_3)=-1.
  $$
  For $a=1/4$, $x$ and $y$ are bimodular functions and $F^2$ is a
  bimodular form of weight $2$ on $(\Gamma_0(2),\Gamma_0(2)+w_2)$ with
  trivial characters.
\end{Proposition}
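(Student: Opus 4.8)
The plan is to prove Proposition~\ref{prop-hyper} in three stages: first establish the hypergeometric identity~\eqref{equation: hypergeometric cases}, then verify the PDE system, and finally check the bimodular transformation laws for each of the three values of~$a$. The key observation that unifies everything is that the sporadic analysis of Propositions~\ref{proposition: sporadic bimodular} and~\ref{proposition: sporadic PDEs} was essentially driven by the two relations in~\eqref{equation: Gt Gf}, and the hypergeometric cases $u_n=(a)_n(1-a)_n/(n!)^2$ are precisely the Ap\'ery-like sequences with $c=0$. So I would first rewrite the defining formulas for $x$, $y$, $F$ in Proposition~\ref{prop-hyper} to match the $c=0$ specialization of the formulas in Proposition~\ref{proposition: sporadic bimodular}. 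Indeed, setting $c=0$ in those formulas gives $x=(t_1+t_2)/(1-at_1t_2\cdot 0)^2-\dots$; more carefully, with the normalizations here one checks $x=-(t_1+t_2)/((1-t_1)(1-t_2))$ and $y=t_1t_2/(t_1+t_2)^2$ arise from the substitution matching $a=1$ after rescaling $t$, so the algebraic relation~\eqref{equation: WZ identity} with $c=0$ becomes exactly~\eqref{equation: hypergeometric cases} once I identify each $t(\tau)$, $f(\tau)$ as the modular parametrization of the one-variable series $\sum (a)_n(1-a)_n/(n!)^2\, t^n = f$.

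For the \textbf{one-variable input} I would invoke the classical modular parametrizations: for $a=1/2$ the identity ${}_2F_1(1/2,1/2;1;t) = \theta_3(\tau)^2$ with $t=\theta_2^4/\theta_3^4$ is Jacobi's; for $a=1/3$ and $a=1/4$ the analogous statements with the given $t(\tau)$ and $f(\tau)$ are standard (signature-$3$ and signature-$4$ theories of elliptic integrals). Granting these, the single-variable series $f(\tau_j)=\sum_n (a)_n(1-a)_n/(n!)^2\, t_j^n$, and then~\eqref{equation: hypergeometric cases} is the $c=0$ instance of Corollary~\ref{cor-WZ-id}: the left-hand side of~\eqref{equation: WZ identity} with $c=0$ equals $f(\tau_1)f(\tau_2)=F(\tau_1,\tau_2)$, and the combinatorial rewriting~\eqref{equation: combinatorial} converts the double sum into the stated form. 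The PDE claim is immediate: setting $c=0$ in~\eqref{equation: PDEs} collapses the first equation to $\theta_x(\theta_x-2\theta_y)F - x(a\theta_x^2+a\theta_x+b)F=0$ with $b=a(1-a)$, which is exactly the system~\eqref{equation: hypergeometric PDE}; so I only need to note that the derivation of Proposition~\ref{proposition: sporadic PDEs} goes through verbatim when $c=0$ (the relations~\eqref{equation: Gt Gf} still hold, now with the right sides simplifying).

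The \textbf{bimodularity} is where the real work lies and will be done case by case, exactly paralleling the proof of Proposition~\ref{proposition: sporadic bimodular}. For $a=1/3$ and $a=1/4$ the groups are $\Gamma_0(3)$ and $\Gamma_0(2)$ with $t(\tau)$ an eta-quotient, so I would apply Lemma~\ref{lemma: eta} to compute $t|w_3$, $f|w_3$ (resp.\ $t|w_2$, $f^2|w_2$) and check that $t\mapsto 1/ct$-type and Atkin--Lehner relations produce the asserted invariance of $x$, $y$ and the character values $\chi_2(w_3)=-1$ (resp.\ trivial $\chi_2$ on $w_2$); the character $\chi_1=\left(\tfrac{-3}{\cdot}\right)$ or $\left(\tfrac{-4}{\cdot}\right)$ comes from the eta-multiplier. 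For $a=1/2$ the group is $(\Gamma_0(4),\Gamma_0(4)+w)$ with $w=\SM1021$ and $t,f$ built from theta functions, so I would use the theta transformation laws under $w$ and under $\Gamma_0(4)$ to verify~\eqref{equation: bimodular def 1}--\eqref{equation: bimodular def 2}. The main obstacle is bookkeeping the multiplier systems: I must confirm that the weight-$1$ character $\chi_1(\SM{*}{*}{c}{d})=(-1)^{c/4}\left(\tfrac{-4}{d}\right)$ on $\Gamma_0(4)$ is exactly what makes $F=f(\tau_1)f(\tau_2)$ transform correctly, and that the half-integral-weight subtleties of $\theta_4^2$ and of $(2E_2(2\tau)-E_2(\tau))^{1/2}$ (the $a=1/4$ case, where $F^2$ rather than $F$ is asserted bimodular) are handled by passing to $F^2$; this is precisely why the proposition states bimodularity of $F^2$ in signature~$4$.
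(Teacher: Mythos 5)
Your overall strategy is viable and, for the identity \eqref{equation: hypergeometric cases}, genuinely different from what the paper does: the paper simply cites \cite{Yang-Yui} for \eqref{equation: hypergeometric cases} and for the PDE system, whereas you derive both from the $c=0$ degeneration of the Wan--Zudilin identity \eqref{equation: WZ identity} together with the classical signature-$2$, $3$, $4$ modular parametrizations. That is a legitimate and more self-contained route (though you should invoke \cite[Theorem 2]{Wan-Zudilin} directly rather than Corollary \ref{cor-WZ-id}, which is stated only for the sporadic cases). The bimodularity verifications you outline are essentially identical to the paper's: eta/theta transformation computations showing that the relevant involution sends $t$ to $1/t$ in each case (Atkin--Lehner for $a=1/3,1/4$, and $\SM1021$ for $a=1/2$), the invariance of $x$ and $y$ under $(t_1,t_2)\mapsto(1/t_1,1/t_2)$, and the multiplier bookkeeping for $f$.

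There is, however, one concrete error you must repair for the first part to go through. You write that ``the single-variable series $f(\tau_j)=\sum_n (a)_n(1-a)_n/(n!)^2\,t_j^n$,'' but this is false for the $t$ and $f$ of Proposition \ref{prop-hyper} and contradicts the Jacobi identity you quote in the same sentence: for $a=1/2$ one has ${}_2F_1(1/2,1/2;1;t)=\theta_3(\tau)^2$, whereas $f=\theta_4(\tau)^2=(1-t)^{1/2}\theta_3(\tau)^2={}_2F_1\bigl(1/2,1/2;1;\tfrac{t}{t-1}\bigr)$ by Pfaff's transformation. The correct one-variable input in all three cases is $f(\tau)=\sum_n u_n s(\tau)^n$ with $s=-t/(1-t)$, and it is precisely the substitution $X=s_1$, $Y=s_2$ in \eqref{equation: Apery xy} with $(a,b,c)=(1,a(1-a),0)$ that produces the factors $(1-t_1)(1-t_2)$ in the stated $x$ and cancels them in $y$; a ``rescaling'' of $t$ does not. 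Your final formulas for $x$ and $y$ are correct, so the fix is local, but as written the key identification is wrong, and the same change of variable must be carried through your $c=0$ reduction of the PDE derivation: the relations \eqref{equation: Gt Gf} hold with $s$, not $t$, as the uniformizing parameter.
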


\begin{proof} The identity \eqref{equation: hypergeometric cases} was
  proved in \cite{Yang-Yui}. It is clear from the criterion given in
  \cite[Proposition 3.2.8]{Ligozat} that the functions $t(\tau)$ are
  modular on their respective groups. Moreover, for the cases $a=1/3$
  and $a=1/4$, it is easy to see that their respective Atkin-Lehner
  involutions map $t$ to $1/t$ and hence $x$ and $y$ are bimodular
  functions on the given groups. For $a=1/2$, we note that
  $$
  \theta_2(\tau)=2\frac{\eta(4\tau)^2}{\eta(2\tau)}, \qquad
  \theta_3(\tau)=\frac{\eta(2\tau)^5}{\eta(\tau)^2\eta(4\tau)^2},
  \qquad
  \theta_4(\tau)=\frac{\eta(\tau)^2}{\eta(2\tau)},
  $$
  and
  $$
  t(\tau)=16\frac{\eta(\tau)^8\eta(4\tau)^{16}}{\eta(2\tau)^{24}}.
  $$
  We then compute that
  \begin{equation*}
    \begin{split}
      \eta\left(\M1021\tau\right)&=e^{2\pi i/24}
      \sqrt{\frac{2\tau+1}i}\eta(\tau), \\
      \eta\left(2\M1021\tau\right)
      &=\eta\left(\M1011(2\tau)\right)
      =e^{2\pi i/12}\sqrt{\frac{2\tau+1}i}\eta(2\tau), \\
      \eta\left(4\M1021\tau\right)
      &=\eta\left(\M2{-1}10(\tau+1/2)\right)
      =e^{2\pi i/12}\sqrt{\frac{2\tau+1}{2i}}\eta(\tau+1/2) \\
      &=e^{10\pi i/48}\sqrt{\frac{2\tau+1}{2i}}
        \frac{\eta(2\tau)^3}{\eta(\tau)\eta(4\tau)}
    \end{split}
  \end{equation*}
  and hence
  $$
  t\left(\M1021\tau\right)
  =\frac1{t(\tau)}.
  $$
  It follows that $x$ and $y$ are bimodular functions on
  $(\Gamma_0(4),\Gamma_0(4)+w)$, $w=\SM1021$ in the case $a=1/2$.

  The bimodular property of $F^2$ in the case $a=1/4$ is obvious.
  That of $F$ in the case $a=1/3$ follows from general properties of
  the theta series $\sum_{m,n}q^{m^2+mn+n^2}$. Finally, for the case
  $a=1/2$, the computation above shows that
  $$
  f(\tau)\Big|\M1021=\frac1if(\tau), \qquad
  F(\tau_1,\tau_2)\Big|\M1021=-F(\tau_1,\tau_2).
  $$
  This completes the proof.
\end{proof}

\paragraph{\bf Series in Conjectures IV and 3(ii)(iii)}

In Conjecture IV of \cite{Sun-list}, Sun listed a family of
$1/\pi$-series of the form
$$
\sum_{n=0}^\infty\binom{2n}n^2T_{2n}(b,c)(An+B)C^n=\frac D\pi.
$$
These formulas were proved in \cite{Wan-Zudilin}. The key ingredient
in the proof is Wan and Zudilin's identity (\cite[Theorem
3]{Wan-Zudilin})
\begin{equation} \label{equation: WZ 1}
  \begin{split}
   &\sum_{n=0}^\infty\frac{(1/2)_n^2}{(n!)^2}
   P_{2n}\left(\frac{(X+Y)(1-XY)}{(X-Y)(1+XY)}\right)
   \left(\frac{X-Y}{1+XY}\right)^{2n} \\
   &\qquad=\frac{1+XY}2{}_2F_1(1/2,1/2;1;1-X^2)
   {}_2F_1(1/2,1/2;1;1-Y^2),
  \end{split}
\end{equation}
valid for $(X,Y)$ close to $(1,1)$, where $P_n(x)$ is the $n$th
Legendre polynomial defined by \eqref{equation: Legendre}. Using
\eqref{equation: combinatorial}, we see that
\begin{equation*}
  \begin{split}
   &P_n\left(\frac{(X+Y)(1-XY)}{(X-Y)(1+XY)}\right) \\
   &\qquad
   =\sum_{m=0}^n\binom nm^2\left(\frac{Y(1-X^2)}{(X-Y)(1+XY)}\right)^m
   \left(\frac{X(1-Y^2)}{(X-Y)(1+XY)}\right)^{n-m} \\
   &\qquad
   =\frac1{(X-Y)^n(1+XY)^n}T_n\left((X+Y)(1-XY), XY(1-X^2)(1-Y^2)
     \right).
  \end{split}
\end{equation*}
Thus, \eqref{equation: WZ 1} can be written as
\begin{equation*}
  \begin{split}
   &\frac{1+XY}2{}_2F_1(1/2,1/2;1;1-X^2)
   {}_2F_1(1/2,1/2;1;1-Y^2) \\
   &\qquad
   =\sum_{n=0}^\infty\frac{(1/2)_n^2}{(n!)^2}
   \frac{T_{2n}\left((X+Y)(1-XY), XY(1-X^2)(1-Y^2)\right)}
   {(1+XY)^{4n}}.
  \end{split}
\end{equation*}
Letting
$$
x=\frac{(X+Y)^2(1-XY)^2}{(1+XY)^4}, \quad
y=\frac{XY(1-X^2)(1-Y^2)}{(X+Y)^2(1-XY)^2},
$$
we then have
\begin{equation} \label{equation: WZ 2}
  \begin{split}
   &\frac{1+XY}2{}_2F_1(1/2,1/2;1;1-X^2)
   {}_2F_1(1/2,1/2;1;1-Y^2) \\
   &\qquad
   =\sum_{n=0}^\infty\frac{(1/2)_n^2}{(n!)^2}
   \sum_{m=0}^{n}\binom{2m}m\binom{2n}{2m}x^ny^m.
  \end{split}
\end{equation}
Note that $x$ and $y$ are both invariant under
\begin{equation} \label{equation: involutions T2n}
(X,Y)\longmapsto\left(\frac1X,\frac1Y\right), \quad
(X,Y)\longmapsto\left(\frac{X-1}{X+1},\frac{Y-1}{Y+1}\right),
\end{equation}
in addition to the obvious symmetry $(X,Y)\mapsto(Y,X)$. The modular
meanings of these symmetries are as follows.

Recall that
$$
\theta_3(\tau)^2={}_2F_1\left(\frac12,\frac12;1;
  \frac{\theta_2(\tau)^4}{\theta_3(\tau)^4}\right)
$$
and
$$
\theta_2(\tau)^4+\theta_4(\tau)^4=\theta_3(\tau)^4.
$$
Therefore, if we let
$$
t(\tau)=\frac{\theta_4(\tau)^2}{\theta_3(\tau)^2}
=\frac{\eta(\tau)^8\eta(4\tau)^4}{\eta(2\tau)^{12}},
$$
$t_1=t(\tau_1)$, $t_2=t(\tau_2)$, and
\begin{equation} \label{equation: T2n xy}
x=\frac{(t_1+t_2)^2(1-t_1t_2)^2}{(1+t_1t_2)^4}, \quad
y=\frac{t_1t_2(1-t_1^2)(1-t_2^2)}{(t_1+t_2)^2(1-t_1t_2)^2},
\end{equation}
then \eqref{equation: WZ 2} just says that
\begin{equation} \label{equation: T2n identity}
\frac{1+t_1t_2}2\theta_3(\tau_1)^2\theta_3(\tau_2)^2
=\sum_{n=0}^\infty\frac{(1/2)_n^2}{(n!)^2}
\sum_{m=0}^n\binom{2m}m\binom{2n}{2m}x^ny^m.
\end{equation}

The function $t(\tau)$ is a modular function on $\Gamma_0(8)$
holomorphic throughout the upper half-plane. Its
values at the four cusps $\infty$, $1/4$, $1/2$, and $0$, of $X_0(8)$
are $1$, $-1$, $\infty$, and $0$, respectively. In particular, it
generates the field of modular functions on $X_0(8)$. Let
$N(\Gamma_0(8))$ be the normalizer of $\Gamma_0(8)$ in
$\SL(2,\R)$. From the
description of $N(\Gamma_0(8))$ given in \cite{Conway-Norton}, we know
that $N(\Gamma_0(8))/\Gamma_0(8)$ is isomorphic to the dihedral group
$D_8$ of $8$ elements and is generated by
$$
\sigma_1=\M0{-1}8{-4}, \qquad\sigma_2=\M0{-1}80,
$$
with $\sigma_1^4=\sigma_2^2=\mathrm{id}$ and
$\sigma_2\sigma_1\sigma_2^{-1}=\sigma_1^{-1}$.
From the values of $t$ at the four cusps, it is easy to see that
$$
t\big|\sigma_1=\frac{t-1}{t+1}, \qquad
t\big|\sigma_2=\frac{1-t}{1+t}.
$$
Therefore, the invariances in \eqref{equation: involutions T2n} mean
that $x(\tau_1,\tau_2)$ and $y(\tau_1,\tau_2)$
are bimodular functions on $(\Gamma_0(8),N(\Gamma_0(8)))$.

Now let $F(\tau_1,\tau_2)$ be the function on the left-hand side of
\eqref{equation: T2n identity}. We have
\begin{equation*}
  \begin{split}
    \eta\left(\M0{-1}8{-4}\tau\right)
    &=\sqrt{\frac{8\tau-4}i}\eta(8\tau-4)
    =e^{-2\pi i/6}\sqrt{\frac{8\tau-4}i}\eta(8\tau), \\
    \eta\left(2\M0{-1}8{-4}\tau\right)
    &=\sqrt{\frac{4\tau-2}i}\eta(4\tau-2)
    =e^{-2\pi i/12}\sqrt{\frac{4\tau-2}i}\eta(4\tau), \\
    \eta\left(4\M0{-1}8{-4}\tau\right)
    &=\sqrt{\frac{2\tau-1}i}\eta(2\tau-1)
    =e^{-2\pi i/24}\sqrt{\frac{2\tau-1}i}\eta(2\tau).
  \end{split}
\end{equation*}
It follows that
$$
\theta_3(\tau)^2\Big|\M0{-1}8{-4}
=\frac{\sqrt 2}i\frac{\eta(4\tau)^{10}}{\eta(2\tau)^4\eta(8\tau)^4}
=\frac1{\sqrt 2i}(1+t(\tau))\theta_3(\tau)^2,
$$
and
\begin{equation*}
  \begin{split}
F(\tau_1,\tau_2)\Big|\M0{-1}8{-4}
&=-\frac14\left(1+\frac{t_1-1}{t_1+1}\frac{t_2-1}{t_2+1}\right)
(1+t_1)(1+t_2)\theta_3(\tau_1)^2\theta_3(\tau_2)^2 \\
&=-F(\tau_1,\tau_2).
\end{split}
\end{equation*}
A similar computation shows that
$F(\tau_1,\tau_2)\big|\SM0{-1}80=-F(\tau_1,\tau_2)$ as well.
We summarize the bimodular properties of the series in Sun's
Conjecture IV in the following proposition.

\begin{Proposition} \label{proposition: T2n}
  Let $x(\tau_1,\tau_2)$ and $y(\tau_1,\tau_2)$ be
  defined by \eqref{equation: T2n xy}. Let also $F(\tau_1,\tau_2)$ be
  the function on the left-hand side of \eqref{equation: T2n
    identity}. Then $x$ and $y$ are bimodular functions and $F$ is a
  bimodular form of weight $1$ on
  $(\Gamma_0(8),N(\Gamma_0(8)))$ with characters $(\chi_1,\chi_2)$
  given by
  $$
  \chi_1\left(\M\ast\ast\ast d\right)=\JS{-1}d, \quad
  \chi_2\left(\M0{-1}8{-4}\right)=\chi_2\left(\M0{-1}80\right)=-1.
  $$
  Moreover, $F$ satisfies the system
  \begin{equation*}
    \begin{split}
      &4\theta_x(\theta_x-\theta_y)F
        -x\left(2\theta_x+1\right)^2F
       -2x\left(2\theta_x+1\right)
        (4y\theta_x+(1-4y)\theta_y)F=0, \\
      &\theta_y^2F-y(2\theta_y-2\theta_x)(2\theta_y-2\theta_x+1)F=0
    \end{split}
  \end{equation*}
of partial differential equations.
\end{Proposition}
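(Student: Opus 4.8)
The plan is to dispatch the two claims of the proposition --- the bimodular/character assertion and the PDE system --- separately, leaning on the computations already made before the statement and on Proposition \ref{proposition: sporadic PDEs}.

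\emph{Bimodular form and characters.} The preceding paragraphs already establish that $t$ generates the function field of $X_0(8)$, that $\sigma_1,\sigma_2$ generate $N(\Gamma_0(8))/\Gamma_0(8)\cong D_8$, that $x$ and $y$ are bimodular functions, and (from the $\eta$-transformation calculations) that $F|\sigma_1=-F$; the parallel calculation for $\sigma_2$ gives $F|\sigma_2=-F$, hence $\chi_2(\sigma_1)=\chi_2(\sigma_2)=-1$. It therefore remains to identify $\chi_1$ and to check consistency. Since the prefactor $(1+t_1t_2)/2$ is $\Gamma_0(8)$-invariant of weight $0$ in each variable, the behaviour of $F$ under $\Gamma_0(8)$ in each variable is exactly that of the classical weight-$1$ form $\theta_3(\tau)^2=\sum_{m,n}q^{m^2+n^2}$, which lies in $M_1(\Gamma_0(4),\JS{-4}{\cdot})$; restricting to $\Gamma_0(8)\subset\Gamma_0(4)$ gives $\chi_1(\SM\ast\ast\ast d)=\JS{-1}d$. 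Because $\chi_1$ is quadratic, $\chi_1^2$ is trivial and equals $\chi_2|_{\Gamma_0(8)}$, while $\chi_2(\sigma_1)=\chi_2(\sigma_2)=-1$ is compatible with the relations $\sigma_1^4=\sigma_2^2=1$, $\sigma_2\sigma_1\sigma_2^{-1}=\sigma_1^{-1}$ of $D_8$ (whose abelianisation is $(\Z/2)^2$), so $\chi_2$ is a well-defined character.

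\emph{The PDE system.} Rather than rerun Theorem 2.1 of \cite{Yang-Yui} from scratch, I would reduce to Proposition \ref{proposition: sporadic PDEs}. The key observation is that, writing $f=\theta_3^2$ and $c=-1$, one has $F=\tfrac12(1-ct_1t_2)f(\tau_1)f(\tau_2)$, and that the variables \eqref{equation: T2n xy} are precisely $x=x_0^2$ and $y=y_0$, where $(x_0,y_0)$ are the sporadic-type coordinates \eqref{equation: Apery xy} attached to $(a,b,c)=(0,0,-1)$:
$$
x_0=\frac{(t_1+t_2)(1-t_1t_2)}{(1+t_1t_2)^2},\qquad
y_0=\frac{t_1t_2(1-t_1^2)(1-t_2^2)}{((t_1+t_2)(1-t_1t_2))^2}.
$$
Now $f=\theta_3(\tau)^2={}_2F_1(1/2,1/2;1;1-t^2)$, and a change of variable shows that $f$, as a function of $t$, satisfies $(\theta_t^2-t^2(\theta_t+1)^2)f=0$; this is exactly \eqref{equation: DE for un} with $(a,b,c)=(0,0,-1)$, so the relations \eqref{equation: Gt Gf} hold with these parameters. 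Consequently the computation in the proof of Proposition \ref{proposition: sporadic PDEs} --- which is purely formal once \eqref{equation: Gt Gf} and the coordinate formulas are in hand, and whose genericity hypotheses $c\neq0$, $a^2-4c\neq0$ are satisfied here --- shows that $F$, as a function of $(x_0,y_0)$, satisfies \eqref{equation: PDEs} with $(a,b,c)=(0,0,-1)$. Finally, since $x=x_0^2$ and $y=y_0$, one has $\theta_{x_0}=2\theta_x$ and $\theta_{y_0}=\theta_y$ as operators, and substituting these into that system reproduces the two displayed equations verbatim.

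\emph{Main obstacle and checks.} With this reduction the PDE claim becomes a clean change of variables, so the genuine work is: (i) verifying the single-variable equation $(\theta_t^2-t^2(\theta_t+1)^2)\theta_3^2=0$ (via the hypergeometric relation and Theorem 1 of \cite{Yang-ModDiff}, or directly from $q$-expansions), and (ii) confirming the algebraic identities $x=x_0^2$, $y=y_0$. Neither is hard, but (i) is the real content that licenses importing the Proposition \ref{proposition: sporadic PDEs} computation to the non-sporadic parameter triple $(0,0,-1)$. A reassuring structural check is that the absence of an $x^2$-term in the first PDE matches the absence of a linear $t$-term in the single-variable equation, while the replacement of $\theta_x$ by $2\theta_x$ in the second PDE is precisely the shadow of the even-index $T_{2n}$ and of the relation $x=x_0^2$.
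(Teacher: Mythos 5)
Your argument is correct. For the bimodular half you are doing essentially what the paper does: Proposition \ref{proposition: T2n} is stated as a summary of the computations immediately preceding it (the action of $\sigma_1,\sigma_2$ on $t$, the eta-function calculation giving $F|\sigma_1=-F$, and the ``similar computation'' for $\sigma_2$), and your identification of $\chi_1$ via $\theta_3^2\in M_1(\Gamma_0(4),\JS{-4}\cdot)$ together with the $\Gamma_0(8)$-invariance of the prefactor $(1+t_1t_2)/2$, plus the consistency check against the $D_8$ relations, only makes explicit what the paper leaves implicit.

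The PDE half is where you genuinely diverge, since the paper records the system without any derivation (the intended route is presumably a direct application of Theorem 2.1 of \cite{Yang-Yui}, recomputing the coefficients $r_i,s_i$ as in the proof of Proposition \ref{proposition: sporadic PDEs}). Your reduction to that proposition at the non-sporadic triple $(a,b,c)=(0,0,-1)$, followed by the substitution $x=x_0^2$, $y=y_0$, $\theta_{x_0}=2\theta_x$, checks out: specializing \eqref{equation: PDEs} gives $2\theta_x(2\theta_x-2\theta_y)F-x(2\theta_x+1)^2F-x(2\theta_x+1)\bigl(8y\theta_x+(2-8y)\theta_y\bigr)F=0$, which is the first displayed equation, and the second matches because $(2\theta_y-2\theta_x)$ and $(2\theta_y-2\theta_x+1)$ commute; the overall factor $\tfrac12$ in $F=\tfrac12(1-ct_1t_2)f_1f_2$ is irrelevant for a homogeneous linear system. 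This route buys a structural explanation of why the $T_{2n}$-system is the $(0,0,-1)$ sporadic system with $\theta_x$ doubled, at the price of one point you should spell out: \eqref{equation: Gt Gf} packages two facts, namely the second-order ODE for $f$ as a function of $t$ \emph{and} the proportionality $\theta_qt=C\,tf^2(1-at+ct^2)$. For $t=\theta_4^2/\theta_3^2$ and $f=\theta_3^2$ the latter reads $\theta_qt=-\tfrac12\,tf^2(1-t^2)$ (it follows from $\theta_q\lambda=\lambda(1-\lambda)\theta_3^4$ with $\lambda=\theta_2^4/\theta_3^4$), and the constant $-\tfrac12$ is harmless because it cancels in the logarithmic derivatives entering \eqref{equation: Gt Gf}. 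With that verification added, the reduction is complete and, in my view, cleaner than recomputing the Yang--Yui coefficients from scratch.
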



We now consider Sun's Conjectures 3(ii) and 3(iii), where the series
are of the form
$$
\sum_{n=0}^\infty(An+B)\binom{2n}n\sum_{m=0}^n\binom nm^2\binom{2m}n
C^nD^{2m-n}=\frac E\pi
$$
and
$$
\sum_{n=0}^\infty(An+B)\binom{2n}n\sum_{m=0}^n\binom nm^2\binom{2m}n
(-1)^mC^nD^{2m-n}=\frac E\pi,
$$
respectively. The bimodular properties of the two types of series are
clearly the same. In \cite[Theorem 2.3]{Rogers-Straub}, Rogers and
Straub show that if $s$ and $t$ are related to $X$ and $Y$ by
\begin{equation} \label{equation: RS}
-st=\left(\frac{X-Y}{4(1+XY)}\right)^2, \qquad
1+\frac{4s}t=\left(\frac{(X+Y)(1-XY)}{(X-Y)(1+XY)}\right)^2,
\end{equation}
then
\begin{equation*}
  \begin{split}
    &\sum_{n=0}^\infty\binom{2n}n\sum_{m=0}^n\binom nm^2\binom{2m}n
    (-1)^ms^nt^{2m-n} \\
    &\qquad\qquad
    =\frac{1+XY}2{}_2F_1(1/2,1/2;1;1-X^2){}_2F_1(1/2,1/2;1;1-Y^2).
  \end{split}
\end{equation*}
Thus, setting
$$
x=\frac st=\frac{XY(1-X^2)(1-Y^2)}{(X-Y)^2(1+XY)^2}, \quad
y=-t^2=\frac{(X-Y)^4}{16XY(1-X^2)(1-Y^2)},
$$
we have
\begin{equation*}
  \begin{split}
    &\sum_{n=0}^\infty\binom{2n}n\sum_{m=0}^n\binom nm^2
    \binom{2m}nx^ny^m \\
    &\qquad\qquad
    =\frac{1+XY}2{}_2F_1(1/2,1/2;1;1-X^2){}_2F_1(1/2,1/2;1;1-Y^2).
  \end{split}
\end{equation*}
As pointed out by Rogers and Straub \cite{Rogers-Straub}, both
expressions in \eqref{equation: RS} are invariant under
\eqref{equation: involutions T2n}.
Therefore, the bimodular properties of this series is the same as
those of the series in Conjecture IV.

\begin{Proposition} Let
  $$
  t(\tau)=\frac{\theta_4(\tau)^2}{\theta_3(\tau)^2}
  =\frac{\eta(\tau)^8\eta(4\tau)^4}{\eta(2\tau)^{12}},
  $$
  $t_1=t(\tau_1)$, $t_2=t(\tau_2)$, and
  $$
  x=\frac{t_1t_2(1-t_1^2)(1-t_2^2)}{(t_1-t_2)^2(1+t_1t_2)^2}, \quad
  y=\frac{(t_1-t_2)^4}{16t_1t_2(1-t_1^2)(1-t_2^2)}.
  $$
  Then
  \begin{equation*}
    \begin{split}
      \frac{1+t_1t_2}2\theta_3(\tau_1)^2\theta_3(\tau_2)^2
     =\sum_{n=0}^\infty\binom{2n}n\sum_{m=0}^n\binom nm^2
      \binom{2m}nx^ny^m. \\
    \end{split}
  \end{equation*}
  Their bimodular properties are the same as those in Proposition
  \ref{proposition: T2n}. They satisfy the system
  \begin{equation*}
    \begin{split}
      &(\theta_x-\theta_y)^2F
      +2x(\theta_x-2\theta_y)(2\theta_x+1)F=0,\\
      &\theta_y(2\theta_y-\theta_x)F-4xy(2\theta_x+1)(2\theta_y+1)F=0
    \end{split}
  \end{equation*}
  of partial differential equations.
\end{Proposition}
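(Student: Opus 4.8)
The plan is to obtain all three assertions—the closed form, the bimodular properties, and the differential system—by specializing to the modular parameterization $X=t_1=t(\tau_1)$, $Y=t_2=t(\tau_2)$, with $t(\tau)=\theta_4(\tau)^2/\theta_3(\tau)^2$, the material already assembled just above for Rogers and Straub's identity \eqref{equation: RS}. First I would prove the stated closed form by inserting $X=t_1$, $Y=t_2$ into the displayed consequence of \eqref{equation: RS}, namely
$$\sum_{n=0}^\infty\binom{2n}n\sum_{m=0}^n\binom nm^2\binom{2m}nx^ny^m=\frac{1+XY}2{}_2F_1(1/2,1/2;1;1-X^2){}_2F_1(1/2,1/2;1;1-Y^2),$$
where in that discussion $x=s/t$ and $y=-t^2$ (so that $x^ny^m=(-1)^ms^nt^{2m-n}$). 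Under $X=t_1$, $Y=t_2$ these rational functions become exactly the $x$ and $y$ of the statement, as a direct check of $x=\tfrac{XY(1-X^2)(1-Y^2)}{(X-Y)^2(1+XY)^2}$ and $y=\tfrac{(X-Y)^4}{16XY(1-X^2)(1-Y^2)}$ confirms. On the right-hand side, $\theta_2^4+\theta_4^4=\theta_3^4$ gives $1-X^2=1-t_1^2=\theta_2(\tau_1)^4/\theta_3(\tau_1)^4$, hence $\theta_3(\tau_1)^2={}_2F_1(1/2,1/2;1;1-t_1^2)$ and likewise for $\tau_2$, so the right-hand side collapses to $\tfrac{1+t_1t_2}2\theta_3(\tau_1)^2\theta_3(\tau_2)^2$. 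This is an identity of analytic functions on the relevant domain, extended by analytic continuation from the region where the series converges.

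For the bimodular properties I would reuse Proposition \ref{proposition: T2n}. The function $F(\tau_1,\tau_2)=\tfrac{1+t_1t_2}2\theta_3(\tau_1)^2\theta_3(\tau_2)^2$ is \emph{literally} the function on the left-hand side of \eqref{equation: T2n identity}, so its being a bimodular form of weight $1$ on $(\Gamma_0(8),N(\Gamma_0(8)))$ with the stated characters is already established and nothing new is needed. For $x$ and $y$: being rational functions of $t_1,t_2$ with $t$ modular on $\Gamma_0(8)$, they are automatically modular in each variable for $\Gamma_0(8)$. For the diagonal action of $N(\Gamma_0(8))$ it suffices, as noted after \eqref{equation: RS}, that $-st=xy$ and $1+4s/t=1+4x$ are invariant under the involutions \eqref{equation: involutions T2n}; hence so are $x$ and $y$. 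Translating through $X=t_1$, $Y=t_2$, these two involutions are the diagonal actions of $t\mapsto(t-1)/(t+1)$ and $t\mapsto1/t$; in the generators $\sigma_1,\sigma_2$ of the discussion preceding Proposition \ref{proposition: T2n} one has $t\mapsto(t-1)/(t+1)=t|\sigma_1$ and $t\mapsto1/t=t|\sigma_1^3\sigma_2$, and $\sigma_1$ together with $\sigma_1^3\sigma_2$ generate $N(\Gamma_0(8))/\Gamma_0(8)\cong D_8$. Thus $x,y$ are invariant under the full diagonal action, so they are bimodular functions on $(\Gamma_0(8),N(\Gamma_0(8)))$, and the properties coincide with those in Proposition \ref{proposition: T2n}.

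The differential system is the only genuinely laborious part, and here the cleanest route is a direct verification on the double series, which avoids the modular bookkeeping altogether. Writing $c_{n,m}=\binom{2n}n\binom nm^2\binom{2m}n$ and using that $\theta_x,\theta_y$ act by multiplication by $n,m$ while $x,y$ shift indices, the second equation $\theta_y(2\theta_y-\theta_x)F-4xy(2\theta_x+1)(2\theta_y+1)F=0$ is equivalent to the two-term recurrence
$$m(2m-n)c_{n,m}=4(2n-1)(2m-1)c_{n-1,m-1},$$
and the first equation $(\theta_x-\theta_y)^2F+2x(\theta_x-2\theta_y)(2\theta_x+1)F=0$ to
$$(n-m)^2c_{n,m}=2(2m-n+1)(2n-1)c_{n-1,m}.$$
Each of these reduces to a standard ratio computation of adjacent binomial coefficients (with the boundary cases $2m=n$, $m=n$, and $m=(n-1)/2$ checked to hold trivially because the relevant factor on one side vanishes). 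Since the closed form of the first paragraph identifies $F$, as a function of $x$ and $y$, with this double series, the two PDEs follow.

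For consistency with the earlier propositions one can instead derive the system conceptually from Theorem 2.1 of \cite{Yang-Yui}, exactly as in Proposition \ref{proposition: sporadic PDEs}. The one new input is the analogue of \eqref{equation: Gt Gf} for $(t,f)=(\theta_4^2/\theta_3^2,\theta_3^2)$: since $\theta_3^2={}_2F_1(1/2,1/2;1;1-t^2)$, a quadratic change of variable in the Gauss hypergeometric equation yields $(1-t^2)\theta^2f-2t^2\theta f-t^2f=0$ with $\theta=t\,d/dt$, whence $\tfrac{\theta_qG_t-2G_tG_f}{G_t^2}=\tfrac{-2t^2}{1-t^2}$ and $\tfrac{\theta_qG_f-G_f^2}{G_t^2}=\tfrac{t^2}{1-t^2}$ by \cite{Yang-ModDiff}. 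One then computes $G_{x,j},G_{y,j},G_{F,j}$ by the chain rule—noting that the prefactor $\tfrac{1+t_1t_2}2$ contributes $\tfrac{t_1t_2G_{t_j}}{1+t_1t_2}$ to $G_{F,j}$—substitutes into the coefficient formulas for $r_0,\dots,s_3$, and takes the combinations clearing denominators. The main obstacle lies precisely here: because the present $x$ and $y$ carry the factors $(t_1-t_2)$ and $(1+t_1t_2)$ in their denominators, expressing $r_i,s_i$ as rational functions of $x,y$ alone requires eliminating $t_1,t_2$ and verifying that the result collapses to the stated polynomial-coefficient form—a step the combinatorial verification bypasses entirely.
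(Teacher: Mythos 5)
Your argument is correct, and for the identity and the bimodular properties it follows the paper's own route exactly: the paper proves this proposition by specializing Rogers and Straub's identity at $X=t_1$, $Y=t_2$ and observing that, since both expressions in \eqref{equation: RS} are invariant under the involutions \eqref{equation: involutions T2n}, the functions $x$ and $y$ (and the form $F$, which is literally the left-hand side of \eqref{equation: T2n identity}) inherit the bimodular properties already established in Proposition \ref{proposition: T2n}. Where you genuinely diverge is the differential system: the paper simply asserts it, implicitly leaving it to the machinery of Theorem 2.1 of \cite{Yang-Yui} as carried out in Proposition \ref{proposition: sporadic PDEs}, whereas you verify it directly by translating the two operators into the contiguous recurrences
$$
m(2m-n)c_{n,m}=4(2n-1)(2m-1)c_{n-1,m-1},\qquad
(n-m)^2c_{n,m}=2(2m-n+1)(2n-1)c_{n-1,m}
$$
for $c_{n,m}=\binom{2n}n\binom nm^2\binom{2m}n$; both ratios check out, and the boundary cases are handled by the vanishing of $\binom{2m}n$ for $2m<n$ and of $\binom{n-1}{n}$. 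This combinatorial verification is more elementary and fully checkable, and it sidesteps the real difficulty you correctly identify in the modular route, namely eliminating $t_1,t_2$ from the coefficient formulas when $x$ and $y$ carry the factors $(t_1-t_2)$ and $(1+t_1t_2)$ in their denominators; what it does not give is the conceptual explanation, furnished by the bimodular framework, of why a closed system with rational coefficients in $x,y$ must exist in the first place.
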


\subsection{Series in Conjecture V}

In \cite[Conjecture V]{Sun-list}, Sun recorded one single
$1/\pi$-series
$$
\sum_{n=0}^\infty\frac{(1/3)_n(2/3)_n}{(n!)^2}
T_{3n}(61,1)\frac{1638n+277}{(-80)^{3n}}=\frac{44\sqrt{105}}\pi
$$
involving $T_{3n}$. In \cite[(34)]{Wan-Zudilin}, Wan and Zudilin gave
another example of such series. Here we shall discuss the
bimodular properties of these series.

According to \cite[Theorem 3]{Wan-Zudilin}, one has
\begin{equation*}
  \begin{split}
   &\sum_{n=0}^\infty\frac{(1/3)_n(2/3)_n}{(n!)^2}P_{3n}
    \left(\frac{X+Y-2X^2Y^2}{W(X-Y)}\right)\left(
      \frac{X-Y}W\right)^{3n} \\
    &\qquad=\frac W3{}_2F_1(1/3,2/3;1;1-X^3){}_2F_1(1/3,2/3;1;1-Y^3),
  \end{split}
\end{equation*}
where $W=\sqrt{1+4XY(X+Y)}$, for $(X,Y)$ close to $(1,1)$. Using
\eqref{equation: Legendre} and \eqref{equation: combinatorial}, this
can be written as
\begin{equation*}
  \begin{split}
    &\frac W3{}_2F_1(1/3,2/3;1;1-X^3){}_2F_1(1/3,2/3;1;1-Y^3) \\
    &\qquad=\sum_{n=0}^\infty\frac{(1/3)_n(2/3)_n}{W^{6n}(n!)^2}
     T_{3n}(X+Y-2X^2Y^2,XY(1-X^3)(1-Y^3)).
  \end{split}
\end{equation*}
Setting
$$
x=\frac{(X+Y-2X^2Y^2)^3}{(1+4XY(X+Y))^3}, \quad
y=\frac{XY(1-X^3)(1-Y^3)}{(X+Y-2X^2Y^2)^2},
$$
we find that
\begin{equation*}
  \begin{split}
    &\frac W3{}_2F_1(1/3,2/3;1;1-X^3){}_2F_1(1/3,2/3;1;1-Y^3) \\
    &\qquad=\sum_{n=0}^\infty\frac{(1/3)_n(2/3)_n}{(n!)^2}
    \sum_{m=0}^{\lfloor 3n/2\rfloor}\binom{2m}m\binom{3n}{2m}
    x^ny^m.
  \end{split}
\end{equation*}

On the other hand, Borwein and Borwein \cite[Theorem
2.3]{Borwein-cubic} showed if we set
$$
L(\tau)=\sum_{m,n\in\Z}q^{m^2+mn+n^2}
$$
and
$$
a(\tau):=L(\tau), \quad
b(\tau):=\frac12(3L(3\tau)-L(\tau)), \quad
c(\tau):=\frac12(L(\tau/3)-L(\tau)),
$$
then
$$
a(\tau)={}_2F_1\left(\frac13,\frac23;1;\frac{c(\tau)^3}{a(\tau)^3}\right)
$$
and
$$
a(\tau)^3=b(\tau)^3+c(\tau)^3.
$$
Therefore, letting
$$
t(\tau)=\frac{b(\tau)}{a(\tau)},
$$
$t_1=t(\tau_1)$, $t_2=t(\tau_2)$, and
\begin{equation} \label{equation: xy T3n}
x=\left(\frac{t_1+t_2-2t_1^2t_2^2}{1+4t_1t_2(t_1+t_2)}\right)^3, \qquad
y=\frac{t_1t_2(1-t_1^3)(1-t_2^3)}{(t_1+t_2-2t_1^2t_2^2)^2},
\end{equation}
we have
\begin{equation} \label{equation: cubic}
  \begin{split}
   &\sum_{n=0}^\infty\frac{(1/3)_n(2/3)_n}{(n!)^2}
    \sum_{m=0}^{\lfloor 3n/2\rfloor}\binom{2m}m\binom{3n}{2m}
    x^ny^m \\
  &\qquad=\frac13\sqrt{1+4t_1t_2(t_1+t_2)}{}_2F_1(1/3,2/3;1;1-t_1^3)
  {}_2F_1(1/3,2/3;1;1-t_2^3) \\
  &\qquad=\frac13\sqrt{1+4t_1t_2(t_1+t_2)}a(\tau_1)a(\tau_2).
\end{split}
\end{equation}
The function $t(\tau)$ is modular on $\Gamma_0(9)$. The relation
between $t$ and the Hauptmodul $j_9(\tau)=\eta(\tau)^3/\eta(9\tau)^3$
on $X_0(9)$ is
$$
t=\frac{j_9}{j_9+9}.
$$

Note that
$$
\Gamma_0(9)=\M1003\Gamma(3)\M1003^{-1}.
$$
Since $\Gamma(3)$ is a normal subgroup of $\SL(2,\Z)$,
$\SM1003\SL(2,\Z)\SM1003^{-1}$ normalizes $\Gamma_0(9)$. From
\cite[Section 3]{Conway-Norton}, we see that this is precisely the
normalizer $N(\Gamma_0(9))$ of $\Gamma_0(9)$ in
$\mathrm{PSL}(2,\R)$. The quotient group $N(\Gamma_0(9))/\Gamma_0(9)$
is generated by
$$
\sigma_1=\M0{-1/3}30, \qquad \sigma_2=\M1031,
$$
of orders $2$ and $3$, respectively. Now, we have
$$
t\Big|\sigma_1=\frac{j_9}{j_9+9}\Big|\sigma_1
=\frac{27/j_9}{27/j_9+9}=\frac{1-t}{1+2t}.
$$
For $\sigma_2$, we compute that
\begin{equation*}
  \begin{split}
    \eta\left(\M1031\tau\right)&=\sqrt{\frac{3\tau+1}i}\eta(\tau), \\
    \eta\left(9\M1031\tau\right)&=\eta\left(\M3{-1}10(\tau+1/3)
    \right)=e^{2\pi i/8}\sqrt{\frac{\tau+1/3}i}
    \eta(\tau+1/3),
  \end{split}
\end{equation*}
and
$$
j_9\Big|\M1031=\sqrt{27}e^{-6\pi i/8}
\frac{\eta(\tau)^3}{\eta(\tau+1/3)^3}
=\frac{\beta j_9}{j_9-\alpha},
$$
where $\alpha=(-9+3\sqrt{-3})/2$ and $\beta=(-9-3\sqrt{-3})/2$.
It follows that
$$
t\big|\sigma_2=\rho^2t, \quad \rho=e^{2\pi i/3}.
$$
We check that the functions $x$ and $y$ of $t_1$ and $t_2$ are
invariant under
$$
(t_1,t_2)\mapsto\left(\frac{1-t_1}{1+2t_1},\frac{1-t_2}{1+2t_2}\right),
\quad
(t_1,t_2)\mapsto\left(\rho t_1,\rho t_2\right).
$$
In other words, $x$ and $y$ are bimodular functions on
$(\Gamma_0(9),N(\Gamma_0(9)))$.

Let $F(\tau_1,\tau_2)$ be the function in the last expression of
\eqref{equation: cubic}. We now consider the bimodular property of
$F^2$. We check that
$$
a(\tau)^2=\frac12(3E_2(3\tau)-E_2(\tau)), \qquad
E_2(\tau)=1-24\sum_{n=1}^\infty\frac{nq^n}{1-q^n}.
$$
Then
$$
a(\tau)^2\Big|\sigma_1=\frac12(3E_2(3\tau)-9E_2(9\tau))
=-a(\tau)^2(1+2t(\tau))^2.
$$
Also, since $a(\tau)^2$ is a modular form of weight $2$ on
$\Gamma_0(3)$, $a(\tau)^2\big|\sigma_2=a(\tau)^2$. It follows that
$F(\tau_1,\tau_2)^2$ is a bimodular form of weight $2$ on
$(\Gamma_0(9),N(\Gamma_0(9)))$ with trivial characters. We summarize
our findings in the following proposition.

\begin{Proposition} Let $x(\tau_1,\tau_2)$ and $y(\tau_1,\tau_2)$ be
  defined by \eqref{equation: xy T3n} and $F(\tau_1,\tau_2)$ be the
  function in the last expression of \eqref{equation: cubic}. Then $x$
  and $y$ are bimodular functions and $F^2$ is a bimodular form of
  weight $2$ with trivial characters on
  $(\Gamma_0(9),N(\Gamma_0(9)))$. As a function of $x$ and $y$, $F$
  satisfies
  \begin{equation*}
    \begin{split}
      &3\theta_x\left(3\theta_x-2\theta_y\right)F
      -x\left(3\theta_x+2y+1\right)
      \left(3\theta_x+2\right)F, \\
      &\qquad
      -6x(2\theta_x+1)((1-4y)\theta_y+9y\theta_x)F=0, \\
      &\theta_y^2F-y(2\theta_y-3\theta_x)(2\theta_y-3\theta_x+1)F=0.
    \end{split}
  \end{equation*}
\end{Proposition}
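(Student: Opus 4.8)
The bimodular assertions are precisely the findings assembled in the discussion preceding the proposition: the functions $x$ and $y$ of \eqref{equation: xy T3n} were checked to be invariant under the generators $\sigma_1,\sigma_2$ of $N(\Gamma_0(9))/\Gamma_0(9)$, while the transformations $a(\tau)^2\mapsto -a(\tau)^2(1+2t)^2$ under $\sigma_1$ and $a(\tau)^2\mapsto a(\tau)^2$ under $\sigma_2$, together with the matching behaviour of the prefactor $W^2=1+4t_1t_2(t_1+t_2)$, show that $F^2$ is invariant of weight $2$ with trivial characters. Hence the only substantive task is the system of partial differential equations. The plan is to exploit the explicit expansion furnished by \eqref{equation: cubic}, namely $F=\sum_{n\ge0}\sum_{m=0}^{\lfloor 3n/2\rfloor}c_nd_{n,m}x^ny^m$ with $c_n=(1/3)_n(2/3)_n/(n!)^2$ and $d_{n,m}=\binom{2m}m\binom{3n}{2m}$, and to verify each equation coefficientwise, using that $\theta_x$ and $\theta_y$ act on $x^ny^m$ as multiplication by $n$ and $m$. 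Since $x$ and $y$ are algebraically independent, this power series is exactly the representation of $F$ in the $(x,y)$-coordinates, so a coefficientwise check is legitimate.

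First I would dispose of the second equation. Extracting the coefficient of $x^ny^m$ from $\theta_y^2F-y(2\theta_y-3\theta_x)(2\theta_y-3\theta_x+1)F=0$ turns it into the two-term recurrence
\[
m^2d_{n,m}=(3n-2m+1)(3n-2m+2)\,d_{n,m-1},
\]
in which the hypergeometric factor $c_n$ cancels entirely. This is immediate from the ratio $d_{n,m}/d_{n,m-1}=(3n-2m+1)(3n-2m+2)/m^2$ of the two binomial coefficients, so the second equation needs no arithmetic input beyond the combinatorics of $d_{n,m}$.

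The first equation is where the real work lies. Writing $a_{n,m}=c_nd_{n,m}$ and extracting the coefficient of $x^ny^m$ produces a single linear relation among $a_{n,m}$, $a_{n-1,m}$, and $a_{n-1,m-1}$. I would feed in the hypergeometric recurrence $9(n+1)^2c_{n+1}=(3n+1)(3n+2)c_n$ together with the $n$- and $m$-shift recurrences for $d_{n,m}$ to express $a_{n-1,m}$ and $a_{n-1,m-1}$ as explicit rational multiples of $a_{n,m}$; after cancelling $a_{n,m}$ the relation collapses to a single polynomial identity in $n$ and $m$, which one then verifies by expansion. As sanity checks along the way, the identity should specialize correctly on the boundary $m=0$, and all three terms should vanish on the degenerate locus $3n=2m$. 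I expect the main obstacle to be purely the bookkeeping of these mixed index shifts and the assembly of the rational factors; no conceptual difficulty arises once both recurrences are in hand.

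As an alternative in keeping with the rest of the paper, one could instead derive both equations from Theorem 2.1 of \cite{Yang-Yui}, exactly as in the proof of Proposition \ref{proposition: sporadic PDEs}: compute the logarithmic derivatives $G_{x,j},G_{y,j},G_{F,j}$ from the one-variable differential relations for $t$ and $a$ (via Theorem 1 of \cite{Yang-ModDiff}), substitute into the coefficient formulas, and clear denominators. The extra difficulty here, absent from the sporadic cases where the prefactor was the rational function $1-ct_1t_2$, is that $F=\frac13\,W\,a(\tau_1)a(\tau_2)$ with $W=\sqrt{1+4t_1t_2(t_1+t_2)}$ carries a genuine square-root factor, so that $G_{F,j}$ acquires the additional term $\frac12\theta_{q_j}\log W^2$ and the computation becomes noticeably heavier. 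This is precisely why I would favour the direct coefficientwise route above.
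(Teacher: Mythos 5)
Your proposal is correct. The bimodular half coincides with the paper's own argument: the proposition is stated as a summary of the computations immediately preceding it (the action of $\sigma_1,\sigma_2$ on $t$, the invariance of $x,y$ from \eqref{equation: xy T3n}, and the transformations $a(\tau)^2|\sigma_1=-a(\tau)^2(1+2t)^2$, $a(\tau)^2|\sigma_2=a(\tau)^2$ together with the compensating behaviour of $W^2$), so there is nothing to add there. For the differential system the paper gives no explicit derivation; the intended route, by analogy with Proposition \ref{proposition: sporadic PDEs}, is Theorem 2.1 of \cite{Yang-Yui} — computing the coefficients from the logarithmic derivatives of $x$, $y$, $F$ and clearing denominators — with, as you observe, the extra burden that $F$ carries the square-root prefactor $W$. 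Your coefficientwise verification is therefore a genuinely different and more elementary route, and it does go through: with $a_{n,m}=c_nd_{n,m}$ the second equation reduces to $m^2d_{n,m}=(3n-2m+1)(3n-2m+2)\,d_{n,m-1}$ exactly as you state, and extracting the coefficient of $x^ny^m$ from the first equation (keeping track of the fact that the $2y$ in $3\theta_x+2y+1$ and the $y$'s in the last operator shift $m$) gives the three-term relation $3n(3n-2m)a_{n,m}=\bigl((3n-1)(3n-2)+6m(2n-1)\bigr)a_{n-1,m}+\bigl(2(3n-1)-24(m-1)(2n-1)+54(n-1)(2n-1)\bigr)a_{n-1,m-1}$, which collapses to a polynomial identity in $n$ and $m$ once the ratios $a_{n-1,m}/a_{n,m}$ and $a_{n-1,m-1}/a_{n,m}$ are substituted from the recurrences for $c_n$ and $d_{n,m}$ (the indices with $2m\ge 3n$ must be treated separately, where all three terms vanish or are killed by the factor $3n-2m$ — this is precisely your boundary check, and it works). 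What your route buys is a self-contained, purely combinatorial proof of the system; what the paper's implicit route buys is a uniform explanation, valid for all the cases treated, of why a system with rational-function coefficients must exist in the first place. Either is acceptable here.
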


\section{Ramanujan-type $1/\pi$-series from bimodular
  forms} \label{section: pi series}

In this section we shall describe procedures to obtain Ramanujan-type
$1/\pi$-series from bimodular forms.

\subsection{General form of $1/\pi$-series}
We first review the notion of CM-points on modular curves. Let $\O$ be
an order in $M(2,\Q)$, that is, a $\Z$-module of rank $4$ that is also
a subring with unity of $M(2,\Q)$. Let $\O^1$ be the group of elements
of determinant $1$ in $\O$. We call the quotient space
$\O^1\backslash\H$, after compactification by adding cusps, the
modular curve associated to $\O$ and denote it by $X(\O)$. Here we
will only consider two types of orders
$$
\left\{\M abcd\in M(2,\Z):N|c\right\}, \qquad
\left\{\M abcd\in M(2,\Z):N|c,a\equiv d\mod N\right\},
$$
where $N$ is a positive integer. The groups of
elements of determinant $1$ in the two cases are $\Gamma_0(N)$ and
$\Gamma_1(N)$ and the modular curves are $X_0(N)$ and $X_1(N)$,
respectively.

Let $\O$ be an order in $M(2,\Q)$. For an embedding
$\phi:\Q(\sqrt{d_0})\hookrightarrow M(2,\Q)$ from a
quadratic number field $\Q(\sqrt{d_0})$ to $M(2,\Q)$, it can be shown
that the intersection of $\phi(\Q(\sqrt{d_0}))$ and $\O$ is $\phi(R)$
for some quadratic order $R$ in $\Q(\sqrt{d_0})$. Let $d$ be the
discriminant of this quadratic order $R$. We say $\phi$ is an
\emph{optimal embedding} of discriminant $d$ with respect to $\O$. Now if $d_0<0$, then elements in $\phi(R)$ share a (unique)
common fixed point $\tau_\phi$ in $\H$. We call this point a
\emph{CM-point} of discriminant $d$ on the modular curve $X(\O)$.
Notice that if $\phi:\Q(\sqrt{d})\hookrightarrow M(2,\Q)$ is an
optimal embedding of discriminant $d$ with respect to $\O$, then so
is $-\phi$ and they share the same fixed point $\tau_\phi$. We say
$\phi$ is \emph{normalized} if
$$
\phi(\alpha)\begin{pmatrix}\tau_\phi\\1\end{pmatrix}
=\alpha\begin{pmatrix}\tau_\phi\\1\end{pmatrix}
$$
for all $\alpha\in\Q(\sqrt d)$. It is clear that this condition
amounts to the assumption that the $(2,1)$-entry of $\phi(\sqrt d)$
is positive.

\begin{Lemma} \label{lemma: one variable pi}
  Let $t$ be a nonconstant modular function on a modular
  curve $X$ assoicated to some order $\O$ in $M(2,\Q)$.
  Let $g=t'/2\pi i=\theta_qt$. Let $\tau_d$ be a CM-point of
  discriminant $d$ on $X$ with the corresponding normalized optimal
  embedding $\phi$. Let $\alpha=\phi(\sqrt d)$ and
  $h=(g|_2\alpha)/g$. We have
  \begin{equation} \label{equation: one variable pi}
  \frac{\theta_qg}{\theta_qt}(\tau_d)-\frac12(\theta_qh)(\tau_d)
  =\frac1{2\pi\Im\tau_d}
  \end{equation}
\end{Lemma}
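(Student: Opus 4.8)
The plan is to exploit the fact that the first term is a logarithmic derivative. Since $g=\theta_q t$, we have $\theta_q g/\theta_q t=\theta_q g/g=:L$, the logarithmic derivative of the weight-$2$ meromorphic modular form $g$, so the left-hand side of \eqref{equation: one variable pi} is $L(\tau_d)-\tfrac12(\theta_q h)(\tau_d)$. First I would extract the fixed-point data encoded in the normalized optimal embedding. Writing $\alpha=\phi(\sqrt d)=\SM{a_0}{b_0}{c_0}{d_0}$, the fact that $\phi$ is an algebra embedding gives $\tr\alpha=0$ and $\det\alpha=-d$ (the norm of $\sqrt d$), while the normalization (positive $(2,1)$-entry, i.e. $c_0>0$) together with the eigenvector relation $\alpha\binom{\tau_d}{1}=\sqrt d\binom{\tau_d}{1}$ yields the two facts I will use repeatedly: $\tau_d$ is the fixed point of $\alpha$, and the automorphy factor satisfies $c_0\tau_d+d_0=\sqrt d$, where $\sqrt d=i\sqrt{|d|}$ is the branch with positive imaginary part. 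Comparing imaginary parts of the last equality gives $\Im\tau_d=\sqrt{|d|}/c_0$.

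Next I would compute $\theta_q h$ entirely through logarithmic derivatives. With the weight-$2$ slash $(g|_2\alpha)(\tau)=\det(\alpha)(c_0\tau+d_0)^{-2}g(\alpha\tau)$, the chain rule gives
\[
\theta_q\log(g|_2\alpha)(\tau)=L(\alpha\tau)\,\frac{\det\alpha}{(c_0\tau+d_0)^2}-\frac{c_0}{\pi i\,(c_0\tau+d_0)},
\]
where the second, anomalous term comes solely from differentiating the automorphy factor $(c_0\tau+d_0)^{-2}$. Since $h=(g|_2\alpha)/g$ and $\theta_q h=h\,\theta_q\log h=h\,\bigl(\theta_q\log(g|_2\alpha)-L\bigr)$, I now evaluate at $\tau=\tau_d$. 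There $\alpha\tau_d=\tau_d$ forces $L(\alpha\tau_d)=L(\tau_d)$, while $\det\alpha/(c_0\tau_d+d_0)^2=-d/(\sqrt d)^2=-1$ gives both $h(\tau_d)=-1$ and the collapse of the first term of the display to $-L(\tau_d)$. Hence $\theta_q\log h(\tau_d)=-2L(\tau_d)-c_0/(\pi i\sqrt d)$, so that $\tfrac12(\theta_q h)(\tau_d)=L(\tau_d)+c_0/(2\pi i\sqrt d)$.

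Subtracting, the modular contributions cancel and $L(\tau_d)-\tfrac12(\theta_q h)(\tau_d)=-c_0/(2\pi i\sqrt d)$. Finally I would convert this to the claimed form using $\sqrt d=i\sqrt{|d|}$ and $\Im\tau_d=\sqrt{|d|}/c_0$: since $2\pi i\sqrt d=-2\pi\sqrt{|d|}$, one gets $-c_0/(2\pi i\sqrt d)=c_0/(2\pi\sqrt{|d|})=1/(2\pi\Im\tau_d)$, as desired. The one genuinely delicate point, rather than a real obstacle, is the bookkeeping: pinning down the correct branch of $\sqrt d$ (so that $c_0\tau_d+d_0=+i\sqrt{|d|}$ and not its negative) and tracking the $\frac1{2\pi i}$ and $\det\alpha$ factors. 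The mechanism behind the $1/\Im\tau_d$ term is transparent in this framing: it is precisely the weight-$2$ quasimodular anomaly $-c_0/(\pi i(c_0\tau+d_0))$ attached to $g$ — the same phenomenon responsible for the non-holomorphic completion of $E_2$ — evaluated at the CM fixed point.
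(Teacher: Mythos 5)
Your proof is correct. It rests on the same two pillars as the paper's argument --- the fixed-point identities $\alpha\tau_d=\tau_d$, $c_0\tau_d+d_0=\sqrt d$, $\det\alpha/(c_0\tau_d+d_0)^2=-1$, and hence $h(\tau_d)=-1$ --- but it differs in how the derivative of $h$ is handled. The paper routes the computation through the Shimura--Maass operator $\partial_k$, using the equivariance $\partial_2(g|_2\alpha)=(\partial_2g)|_4\alpha$ together with the product rule on $gh$ to obtain $2(\partial_2g/g)(\tau_d)=(\theta_qh)(\tau_d)$; the term $1/(2\pi\Im\tau_d)$ then appears only at the end, when $\partial_2g$ is rewritten as $\theta_qg-g/(2\pi\Im\tau)$. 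You instead differentiate $\log(g|_2\alpha)$ directly by the chain rule, so the same term appears as the anomaly $-c_0/\bigl(\pi i(c_0\tau+d_0)\bigr)$ produced by the automorphy factor, evaluated at $c_0\tau_d+d_0=i\sqrt{|d|}$ and converted via $\Im\tau_d=\sqrt{|d|}/c_0$. The two computations are equivalent --- the Shimura--Maass equivariance is proved by exactly the cancellation you carry out by hand --- so your route is essentially an inlined, self-contained version of the paper's; what it buys is independence from the Shimura--Maass formalism, at the cost of having to track the branch of $\sqrt d$ and the $2\pi i$ normalizations explicitly, which you do correctly.
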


\begin{proof} The proof is essentially the same as that of Equation
  (4) in \cite{Yang-Ramanujan}. For convenience of the reader, we
  reproduce the proof here.

  Write
  $$
  \alpha=\M{b_1}{b_2}{b_3}{b_4}.
  $$
  Since $\alpha$ is the image of $\sqrt d$ under the optimal embedding
  $\phi$, we have $\tr\alpha=0$ and $\det\alpha=|d|$. As $\tau_d$ is a
  fixed point of $\alpha$, it follows that
  $$
  \tau_d=\frac{(b_1-b_4)+\sqrt{(b_1-b_4)^2+4b_2b_3}}{2b_3}
  =\frac{b_1+\sqrt d}{b_3}
  $$
  (note that $\phi$ is normalized, so $b_3>0$)
  and
  \begin{equation} \label{equation: pi 1}
  \frac{\det\alpha}{(b_3\tau_d+b_4)^2}=\frac{|d|}{d}=-1.
  \end{equation}
  Consequently, by the definition of the slash operator,
  \begin{equation} \label{equation: pi 2}
  h(\tau_d)=\frac{\det\alpha}{(b_3\tau_d+b_4)^2}
  \frac{g(\alpha\tau_d)}{g(\tau_d)}=-1.
  \end{equation}

  Now recall that the Shimura-Maass operator $\partial_k$ of weight
  $k$ for a smooth function $f:\H\to\C$ and an integer $k$ is defined
  to be
  $$
  (\partial_kf)(\tau):=\frac1{2\pi i}\left(f'(\tau)+
    \frac{kf(\tau)}{\tau-\overline\tau}\right).
  $$
  The Shimura-Maass operators satisfy
  \begin{equation} \label{equation: Shimura 1}
  \partial_{k_1+k_2}(f_1f_2)=(\partial_{k_1}f_1)f_2+f_1
  (\partial_{k_2}f_2)
  \end{equation}
  and
  \begin{equation} \label{equation: Shimura 2}
  \partial_k(f|_k\gamma)=(\partial_kf)|_{k+2}\gamma
  \end{equation}
  for all smooth functions $f_1,f_2,f:\H\to\C$, any integers
  $k_1,k_2,k$, and any $\gamma\in\GL^+(2,\R)$ (see Equations (1.5) and
  (1.8) of \cite{Shimura-Maass}). The second property in particular
  implies that if $f$ is modular of weight $k$ on a subgroup $\Gamma$
  of $\GL^+(2,\R)$, then $\partial_kf$ is modular of weight $k+2$ on
  $\Gamma$.

  Now consider $\partial_2(g|_2\alpha)/g$. By \eqref{equation:
    Shimura 2}, it is equal to $(\partial_2g)|_4\alpha$. On the other
  hand, by \eqref{equation: Shimura 1},
  $$
  \frac{(\partial_2g)|_4\alpha}g=\frac{\partial_2(g|_2\alpha)}g
  =\frac{\partial_2(gh)}g
  =h\frac{\partial_2g}g+\partial_0h
  =h\frac{\partial_2g}g+\theta_qh.
  $$
  Evaluating the two sides at $\tau_d$ and using \eqref{equation: pi
    1} and \eqref{equation: pi 2}, we find that
  \begin{equation} \label{equation: pi 3}
  \frac{\partial_2g}g(\tau_d)=-\frac{\partial_2g}g(\tau_d)
  +(\theta_qh)(\tau_d).
  \end{equation}
  Since
  $$
  (\partial_2g)(\tau)=(\theta_qg)(\tau)+\frac{2g(\tau)}
  {2\pi i(\tau-\overline\tau)}
  =(\theta_qg)(\tau)-\frac{g(\tau)}{2\pi\Im\tau},
  $$
  \eqref{equation: pi 3} can be written as
  $$
  \frac{\theta_qg}g(\tau_d)-\frac12(\theta_qh)(\tau_d)
  =\frac1{2\pi\Im\tau_d}.
  $$
  This completes the proof of the lemma.
\end{proof}

Then the general form of $1/\pi$-series can be stated as follows.

\begin{Theorem} \label{theorem: general}
  Let $\{u_n\}$ be one of the six sporadic Ap\'ery-like
  sequences with parameters $(a,b,c)$, congruence subgroup $\Gamma$,
  modular function $t$, and modular form $f$ given in Table
  \ref{table: sporadic}. Let $x(X,Y)$ and $y(X,Y)$ be defined by
  \eqref{equation: Apery xy}. For two CM-points $\tau_1$ and $\tau_2$
  in $\H$ of discriminiants $d_1$ and $d_2$, respectively, such that
  $\Q(\sqrt{d_1})=\Q(\sqrt{d_2})$, set
  $$
  t_1=t(\tau_1), \quad t_2=t(\tau_2), \quad x_0=x(t_1,t_2), \quad
  y_0=y(t_1,t_2).
  $$
  Assume that the series $\sum_n\sum_{m\le\lfloor n/2\rfloor}u_n
  \binom{2m}m\binom n{2m}x_0^ny_0^m$ converges absolutely.
  Then there exist algebraic numbers $B_j$ and $C_j$, $j=1,2$, such
  that
  \begin{equation} \label{equation: general 1}
  \sum_{n=0}^\infty\sum_{m=0}^{\lfloor n/2\rfloor}u_n
  \binom{2m}m\binom n{2m}(n+B_1)x_0^ny_0^m
  =\frac{C_1}\pi
  \end{equation}
  and
  \begin{equation} \label{equation: general 2}
  \sum_{n=0}^\infty\sum_{m=0}^{\lfloor n/2\rfloor}u_n
  \binom{2m}m\binom n{2m}(m+B_2)x_0^ny_0^m
  =\frac{C_2}\pi.
  \end{equation}
  To be more concrete, let $\phi_j$ be the optimal embeddings defining
  the CM-points $\tau_j$ and $\alpha_j=\phi_j(\sqrt{d_j})$, $j=1,2$.
  Set
  $$
  g=\theta_qt, \qquad h_j=(g|_2\alpha_j)/g,
  $$
  and
  $$
  \delta_j=\frac{\theta_qh_j}{f^2}(\tau_j).
  $$
  Set also $\epsilon=f(\tau_2)/f(\tau_1)$. Then
  \begin{equation*}
    \begin{split}
      B_1&=\frac{c(t_1(\theta_xY)|_{(t_1,t_2)}
        +t_2(\theta_xX)|_{(t_1,t_2)})}
      {1-ct_1t_2}
      +(\theta_xX)|_{(t_1,t_2)}\frac{-\delta_1+2-4at_1+6ct_1^2}
      {4t_1(1-at_1+ct_1^2)} \\
      &\qquad\qquad+(\theta_xY)|_{(t_1,t_2)}
      \frac{-\delta_2+2-4at_2+6ct_2^2}{4t_2(1-at_2+ct_2^2)}.
    \end{split}
  \end{equation*}
  and
  $$
  C_1=\frac{\epsilon(1-ct_1t_2)(\theta_xX)|_{(t_1,t_2)}}
  {4t_1(1-at_1+ct_1^2)\Im\tau_1}
  +\frac{\epsilon^{-1}(1-ct_1t_2)(\theta_xY)|_{(t_1,t_2)}}
  {4t_2(1-at_2+ct_2^2)\Im\tau_2},
  $$
  where $\theta_x=x\partial/\partial x$, and similar expressions with
  $\theta_x$ replaced by $\theta_y:=y\partial/\partial y$ hold for
  $B_2$ and $C_2$.
\end{Theorem}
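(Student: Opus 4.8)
The plan is to read the left-hand side of \eqref{equation: general 1} as a single differential operator applied to a generating function, and then evaluate at a CM-point. Put $F(x,y)=\sum_{n,m}u_n\binom{2m}m\binom n{2m}x^ny^m$; since $\theta_x=x\,\partial/\partial x$ brings down a factor $n$, the left side of \eqref{equation: general 1} is precisely $\bigl((\theta_x+B_1)F\bigr)(x_0,y_0)$. By Corollary \ref{cor-WZ-id}, under the substitution $x=x(\tau_1,\tau_2)$, $y=y(\tau_1,\tau_2)$ this power series equals the bimodular form $F(\tau_1,\tau_2)=(1-ct_1t_2)f(\tau_1)f(\tau_2)$. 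Thus the whole problem reduces to computing $\theta_xF$ at $(\tau_1,\tau_2)$ and then choosing $B_1$. Since \eqref{equation: general 2} is obtained verbatim by replacing $\theta_x$ with $\theta_y$, I would treat only \eqref{equation: general 1}.

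First I would rewrite $\theta_x$ in terms of $\tau_1,\tau_2$. Regarding $F$ as a function of $t_1,t_2$ and changing variables through $X=t_1$, $Y=t_2$, the chain rule gives
\[
\frac{\theta_xF}F=(\theta_xX)\frac{G_{F,1}}{g_1}+(\theta_xY)\frac{G_{F,2}}{g_2},
\]
where $g_j=\theta_{q_j}t_j$, $G_{F,j}=\theta_{q_j}F/F$, and $\theta_xX,\theta_xY$ are evaluated at $(t_1,t_2)$. From $F=(1-ct_1t_2)f_1f_2$ one computes $G_{F,1}/g_1=-ct_2/(1-ct_1t_2)+G_{f_1}/g_1$ with $G_{f_j}=\theta_{q_j}f_j/f_j$, and symmetrically for $G_{F,2}/g_2$. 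A preliminary identity I would record, which follows from the first relation in \eqref{equation: Gt Gf} together with the normalization $t\sim q$ at the cusp, is $\theta_qt=f^2\,t\,(1-at+ct^2)$, whence $g_j=f_j^2t_j(1-at_j+ct_j^2)$.

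The heart of the matter is the evaluation of $G_{f_j}$ at the CM-points. Combining the two relations in \eqref{equation: Gt Gf} yields $\theta_qg/\theta_qt=2G_f+(g/t)(1-2at+3ct^2)/(1-at+ct^2)$; substituting this into Lemma \ref{lemma: one variable pi} and solving for $G_f$ gives
\[
G_{f_j}(\tau_j)=\frac1{4\pi\,\Im\tau_j}+\frac{f_j^2}4\bigl(\delta_j-2+4at_j-6ct_j^2\bigr),
\]
where $\delta_j=(\theta_qh_j/f^2)(\tau_j)$. Dividing by $g_j=f_j^2t_j(1-at_j+ct_j^2)$ cancels the transcendental factor $f_j^2$ out of the second summand, so $\theta_xF/F$ separates into an algebraic part and a part carrying the factor $1/\pi$. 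I would then define $B_1$ to be the negative of the algebraic part; expanding it and using the inverse-function-theorem identities $\theta_xX=xy_{t_2}/(x_{t_1}y_{t_2}-x_{t_2}y_{t_1})$ and $\theta_xY=-xy_{t_1}/(x_{t_1}y_{t_2}-x_{t_2}y_{t_1})$ (with $x_{t_j}=\partial x/\partial t_j$, etc.) reproduces the stated expression for $B_1$. With this choice $(\theta_x+B_1)F$ equals $F$ times the $1/\pi$-part; since $F=(1-ct_1t_2)f_1f_2$ while the $1/\pi$-part carries $1/g_j=1/(f_j^2t_j(1-at_j+ct_j^2))$, the periods collapse to $\epsilon=f_2/f_1$, and one reads off exactly $C_1/\pi$ with the $C_1$ of the statement.

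Two matters require genuine input beyond the formal computation. The algebraicity of $B_1$ and $C_1$ is arithmetic: $\delta_j$ is the value at a CM-point of the weight-$0$ meromorphic modular function $\theta_qh_j/f^2$, hence algebraic; $\Im\tau_j=\sqrt{|d_j|}/(2a_j)$ is algebraic; and $\epsilon$ is algebraic because $\tau_1,\tau_2$ lie in the same imaginary quadratic field, so that their CM-periods are algebraically proportional by Shimura's theory. The principal obstacle, however, is analytic rather than algebraic: the generating-function identity and the chain-rule computation are justified only near $q_1=q_2=0$, so I must argue---using the assumed absolute convergence at $(x_0,y_0)$ together with analytic continuation of $F(\tau_1,\tau_2)$ along a path joining the cusp to $(\tau_1,\tau_2)$, and the non-vanishing of the Jacobian $x_{t_1}y_{t_2}-x_{t_2}y_{t_1}$ and of $g_j$ at the CM-point---that the operator value $\bigl((\theta_x+B_1)F\bigr)(x_0,y_0)$ really equals the value of the series in \eqref{equation: general 1}.
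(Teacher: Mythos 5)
Your proposal is correct and follows essentially the same route as the paper's proof: apply $\theta_x$ to the Wan--Zudilin identity \eqref{equation: WZ identity}, convert to the variables $t_1,t_2$ by the chain rule, use the relation $\theta_qt=f^2t(1-at+ct^2)$ to isolate the logarithmic derivative of $f$, and then evaluate via Lemma \ref{lemma: one variable pi}; your reformulation through $G_{f_j}$ rather than $f\,df/dt$ is only a cosmetic repackaging, and your closing remarks on algebraicity and convergence supply details the paper leaves implicit.
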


We remark that
$$
(\theta_xX)(X,Y)=\frac{X(1-cY^2)(1-aX+cX^2)}
{1-a(X+Y)+c(X^2+4XY+Y^2)-acXY(X+Y)+c^2X^2Y^2}
$$
and a similar expression with the roles of $X$ and $Y$ switched holds
for $(\theta_xY)(X,Y)$. Note that if we let $\alpha$ and $\beta$ be
the two numbers such that $1-aX+cX^2=(1-\alpha X)(1-\beta X)$, then
the denominator of the expression above can be factorized as
$$
(1-\alpha(X+Y)+cXY)(1-\beta(X+Y)+cXY).
$$
The expressions for $\theta_yX$ and $\theta_yY$ are more complicated.
We have
\begin{equation*}
  \begin{split}
  (\theta_yX)(X,Y)&=\frac{XY(1-aX+cX^2)(1-aY+cY^2)}{(Y-X)(1-cXY)^2}\\
  &\times\frac{1-2aX+3cX^2+cXY(3-2aX+cX^2)}
  {1-a(X+Y)+c(X^2+4XY+Y^2)-acXY(X+Y)+c^2X^2Y^2}.
  \end{split}
\end{equation*}

\begin{proof}[Proof of Theorem \ref{theorem: general}]
  The proof consists mostly of straightforward calculus. Abusing
  notations, for variables $\tau_j\in\H$, $j=1,2$, we let
  $t_j=t(\tau_j)$, $f_j=\sum_{n=0}^\infty u_nt_j^n$, $g_j=g(\tau_j)$,
  $x=x(t_1,t_2)$, and $y=y(t_1,t_2)$. By \eqref{equation: WZ
    identity},
  $$
  \sum_{n=0}^\infty\sum_{m=0}^{\lfloor n/2\rfloor}u_n\binom{2m}m
  \binom n{2m}x^ny^m=(1-ct_1t_2)f_1f_2.
  $$
  Applying the differential operator $\theta_x:=x\partial/\partial x$
  on the two sides and using the chain rule, we obtain
  \begin{equation} \label{equation: main theorem tmp}
    \begin{split}
      \sum_{n=0}^\infty\sum_{m=0}^{\lfloor n/2\rfloor}
      u_n\binom{2m}m\binom n{2m}nx^ny^m
      &=-c(t_1\theta_xt_2+t_2\theta_xt_1)f_1f_2 \\
      &\qquad+(1-ct_1t_2)\left(f_2\frac{df_1}{dt_1}\theta_xt_1
      +f_1\frac{df_2}{dt_2}\theta_xt_2\right).
    \end{split}
  \end{equation}
  We check case by case that the modular forms $f$ and $g$ are
  related by
  $$
  g=f^2t(1-at+ct^2).
  $$
  Hence,
  $$
  f\frac{df}{dt}=\frac1{2t(1-at+ct^2)}\frac{dg}{dt}
  -f^2\frac{1-2at+3t^2}{2t(1-at+ct^2)}.
  $$
  Plugging this into \eqref{equation: main theorem tmp}, evaluating at
  two CM-points $\tau_1$ and $\tau_2$ of discriminants $d_1$ and $d_2$
  with $\Q(\sqrt{d_1})=\Q(\sqrt{d_2})$, and setting
  $\epsilon=f(\tau_2)/f(\tau_1)$, we obtain
  \begin{equation*}
    \begin{split}
      &\sum_{n=0}^\infty\sum_{m=0}^{\lfloor n/2\rfloor}
      u_n\binom{2m}m\binom n{2m}nx^ny^m
      =-c(t_1\theta_xt_2+t_2\theta_xt_1)f_1f_2 \\
      &\qquad+\epsilon(1-ct_1t_2)\theta_xt_1
      \left(\frac1{2t_1(1-at_1+ct_1^2)}\frac{dg_1}{dt_1}
        -f_1^2\frac{1-2at_1+3ct_1^2}{2t_1(1-at_1+ct_1^2)}\right) \\
      &\qquad+\frac1\epsilon(1-ct_1t_2)\theta_xt_2
      \left(\frac1{2t_2(1-at_2+ct_2^2)}\frac{dg_2}{dt_2}
        -f_2^2\frac{1-2at_2+3ct_2^2}{2t_2(1-at_2+ct_2^2)}\right).
    \end{split}
  \end{equation*}
  Finally, substituting \eqref{equation: one variable pi} into the
  last expression and simplifying, we obtain the claimed formula.
\end{proof}

\begin{Remark}\label{rem-thm-Sun} Following the same computation, we can also obtain the
  general form of $1/\pi$-series for $u_n=(a)_n(1-a)_n/(n!)^2$ with
  $a\in \{1/2,1/3,1/4\}$.

  Let $t(\tau), x(\tau_1,\tau_2)$ and
  $y(\tau_1,\tau_2)$ be given in Proposition \ref{prop-hyper},
  and assume other notations and conditions in Theorem \ref{theorem:
    general}.
  Then \eqref{equation: general 1} and \eqref{equation: general 2}
  hold with
  \begin{align*}
      B_1&=\frac{(1-t_1)(1-t_2)}{4(1-t_1t_2)}(4-\delta_1-\delta_2), \\
      C_1&=\frac{(1-t_1)(1-t_2)}{4(1-t_1t_2)}\left(\frac{\epsilon}{\Im \tau_1}+\frac{\epsilon^{-1}}{\Im \tau_2} \right).
  \end{align*}
  and
\begin{align}
 B_2=\frac{1}{4(1-t_1t_2)(t_2-t_1)}\left((2-\delta_1)(1-t_1^2)t_2-(2-\delta_2)
 (1-t_2^2)t_1 \right), \\
 C_2=\frac{1}{4(1-t_1t_2)(t_2-t_1)}\left(\frac{\epsilon}{\Im \tau_1}(1-t_1^2)t_2-\frac{\epsilon^{-1}}{\Im \tau_2}
 (1-t_2^2)t_1 \right).
 \end{align}
\end{Remark}

\subsection{$1/\pi$-series with rational $x$ and $y$}

In this section, we shall describe situations where the values of the
bimodular functions $x$ and $y$ are rational numbers.

Let $\{u_n\}$ be one of the sporadic Ap\'ery-like sequences and let
the notations $(a,b,c)$, $\alpha$, $\beta$, $\Gamma$, $G$, $t$, $x$,
and $y$ be given as in Proposition \ref{proposition: sporadic
  bimodular}.
\medskip

\paragraph{\bf Case $\Gamma=\Gamma_0(6)$}
Consider the first three cases where
$\Gamma=\Gamma_0(6)$ and $G=N(\Gamma_0(6))$. For a negative
discriminant $d$, we let $\CM(d)$ denote the set of CM-points of
discriminant $d$ on $X_0(6)$. It is clear that the group of
Atkin-Lehner involutions acts on $\CM(d)$.

\begin{Lemma}[{\cite[Theorem 2]{Ogg}}] Let $d$ be a negative
  discriminant. Write it as $d=r^2d_0$, where $d_0$ is a fundamental
  discriminant. Then the number $|\CM(d)|$ of CM-points of
  discriminant $d$ on $X_0(6)$ is
  $$
  |\CM(d)|=h(d)\times\prod_{p|6}\nu_p(d),
  $$
  where $h(d)$ is the class number of the quadratic order of
  discriminant $d$ and $\nu_p(d)$ is defined by
  $$
  \nu_p(d)=\begin{cases}
    \displaystyle 1+\JS{d}p, &\text{if }p\nmid r, \\
    2, &\text{if }p|r. \end{cases}
  $$
\end{Lemma}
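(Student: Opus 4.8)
The plan is to realize $\CM(d)$ as a set of conjugacy classes of optimal embeddings and then invoke Eichler's local--global formula for counting such embeddings. First I would recall that, as in the definition of CM-points given above, a CM-point of discriminant $d$ on $X_0(6)$ is the fixed point $\tau_\phi\in\H$ of a normalized optimal embedding $\phi$ of the quadratic order $R=\O_d$ of discriminant $d$ into the Eichler order $\O=\{\SM abcd\in M(2,\Z):6\mid c\}$, and that two such embeddings produce the same point of $X_0(6)=\Gamma_0(6)\backslash\H$ precisely when they are conjugate by an element of $\Gamma_0(6)=\O^1$. Thus $|\CM(d)|$ equals the number of $\Gamma_0(6)$-conjugacy classes of normalized optimal embeddings $R\hookrightarrow\O$.

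Next I would apply Eichler's theory of optimal embeddings into an Eichler order. The order $\O$ has squarefree level $6$, so its local completion $\O_p$ is the maximal order $M(2,\Z_p)$ for every $p\nmid 6$ and the standard Eichler order of level $p$ for $p\in\{2,3\}$. Since $M(2,\Q)$ is split with a single type, Eichler's local--global principle asserts that the number of conjugacy classes of optimal embeddings of $R$ into $\O$ factors as $h(d)\prod_p m_p$, where $h(d)=h(R)$ is the class number of $R$ and $m_p$ is the number of local optimal embeddings of $R_p$ into $\O_p$ up to local conjugation. For $p\nmid 6$ the local order $\O_p$ is maximal, and the local optimal embedding number of any quadratic order into $M(2,\Z_p)$ is $1$; hence only the primes $p\mid 6$ contribute a nontrivial factor, matching the shape $h(d)\prod_{p\mid 6}\nu_p(d)$ of the claimed formula.

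It then remains to identify $m_p=\nu_p(d)$ for $p=2,3$, which I would do with Eichler's explicit local embedding numbers for an Eichler order of level exactly $p$. Writing $d=r^2 d_0$ with $d_0$ fundamental, the local order $R_p$ is maximal exactly when $p\nmid r$; in that case the embedding number is $m_p=1+\JS{d_0}p=1+\JS dp$, which equals $2,0,1$ according as $p$ splits, is inert, or ramifies, i.e.\ exactly $1+\JS dp$. When $p\mid r$ the order $R_p$ is non-maximal, and the level-$p$ Eichler embedding number is $m_p=2$. These two cases are precisely the definition of $\nu_p(d)$, so multiplying gives $|\CM(d)|=h(d)\prod_{p\mid 6}\nu_p(d)$.

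The main obstacle I anticipate is twofold. The first is the explicit computation of the local embedding number $m_p=2$ in the non-maximal case $p\mid r$, which requires tracking how the non-maximal local quadratic order $R_p$ sits inside the two maximal orders determined by the local Eichler order $\O_p$. The second, more delicate, bookkeeping issue is reconciling conjugacy by $\Gamma_0(6)=\O^1$ with the conjugacy by the full unit group used in the standard Eichler formula, together with the $\phi\leftrightarrow-\phi$ ambiguity that the normalization condition (positivity of the $(2,1)$-entry of $\phi(\sqrt d)$) is designed to remove. It is exactly here that a spurious factor of $2$ could enter, so the hard part will be to verify that the orientation built into normalized optimal embeddings makes the two counts agree and yields the class number $h(d)$ with no extraneous constant.
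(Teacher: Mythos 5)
Your outline is correct and is, in substance, the standard proof of the quoted result; note, however, that the paper itself offers no proof of this lemma at all --- it is simply imported as Theorem 2 of \cite{Ogg} --- so there is nothing in the text to match your argument against except the analogous counting lemma that the authors do prove later for $X_0(8)$. There they argue exactly along your lines but organize the global bookkeeping differently: instead of invoking Eichler's trace formula for optimal embeddings and then reconciling $\O^\times$-conjugacy with $\O^1=\Gamma_0(N)$-conjugacy, they use Shimura reciprocity to say that $\Gal(H/K)$ acts on $\CM(d)$ with every orbit of size exactly $h(d)$, and that two CM-points lie in the same orbit if and only if their normalized optimal embeddings are everywhere locally equivalent; the count $h(d)\prod_p\nu_p(d)$ then drops out with $\nu_p(d)$ the number of local optimal embeddings of $R_p$ into $\O_p$ modulo $\O_p^\times$. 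That formulation buys you precisely the two things you flag as delicate: the orientation/normalization and the $\pm\phi$ ambiguity are absorbed into the statement about Galois orbits of actual points on the curve, so no spurious factor of $2$ can appear, while your route requires an explicit check that $\O^1$-classes and $\O^\times$-classes of normalized embeddings agree. The remaining local computations you defer are genuinely standard: for an Eichler order of level exactly $p$ in the split algebra one has $m_p=1+\JS dp$ when $p\nmid r$ and $m_p=2$ when $p\mid r$ (the Eichler symbol value $1$ at primes dividing the conductor), as in \cite[Ch.~II--III]{Vigneras}; these are the same facts the authors verify by hand, matrix by matrix, in their $X_0(8)$ and $X_0(9)$ lemmas. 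So I see no gap in your plan, only work left in the two places you correctly identified, and the Shimura-reciprocity packaging used elsewhere in the paper is the cleanest way to discharge the second of them.
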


Note that the function $\nu_p(d)-1$ is called the Eichler symbol (in
the case $M(2,\Q)$) in literature.

\begin{Lemma} Let $d$ be a negative discriminant such that the set
  $\CM(d)$ of CM-points of discriminant $d$ on $X_0(6)$ is nonempty.
  Let $K=\Q\sqrt d)$, and $R$, $h(d)$, and $H$ be the quadratic order
  of discriminant $d$, its class number, and its ring class field,
  respectively.
  \begin{enumerate}
    \item[(i)] Assume that $6|d$. Then
      $\Q(t(\tau))=H\cap\R$ for any $\tau\in\CM(d)$.
    \item[(ii)] Assume that $6\nmid d$. Then $\Q(t(\tau))=H$ for any
      $\tau\in\CM(d)$.
  \end{enumerate}

  Moreover, assume that the class group of $R$ is an elementary
  $2$-group, i.e., that each genus of the class group consists of one
  single class. Let $W$ be the full Atkin-Lehner group on $X_0(6)$.
  If $|\CM(d)|$ is $2$ or $4$, then the values of
  $x(\tau_1,\tau_2)$ and $y(\tau_1,\tau_2)$ are both rational
  numbers for any $\tau_1,\tau_2\in\CM(d)$; if $|\CM(d)|=8$, then the
  same conclusion holds for any $\tau_1,\tau_2\in\CM(d)$ such that
  $\tau_2$ is not in the $W$-orbit of $\tau_1$.
\end{Lemma}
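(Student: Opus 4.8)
The plan is to deduce everything from the theory of complex multiplication together with the bimodular symmetries of $x$ and $y$. First I would establish the parenthetical statements (i) and (ii) about the field $\Q(t(\tau))$. Since $X_0(6)$ has genus $0$ and $t$ is a Hauptmodul with rational $q$-expansion, the value $t(\tau)$ generates over $K=\Q(\sqrt d)$ the field of definition of the point $[\tau]\in X_0(6)$, which by Shimura's reciprocity law is the ring class field $H$ of the order $R$; that is, $K(t(\tau))=H$. As $[H:K]=h(d)$ and $[K:\Q]=2$, the field $\Q(t(\tau))$ is then either $H$ itself or an index-$2$ subfield, the former occurring precisely when $\sqrt d\in\Q(t(\tau))$. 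To decide which, I would analyze the action of complex conjugation $c:\tau\mapsto-\overline\tau$ on the CM-point. Complex conjugation inverts the ideal class and, at the level structure, is governed by the Fricke involution $w_6$: when $6\mid d$ the primes above $2$ and $3$ are ramified and the level-$6$ structure is conjugation-stable, so $c$ fixes $[\tau]$, the value $t(\tau)$ is real, and $\Q(t(\tau))=H\cap\R$; when $6\nmid d$ conjugation acts through $w_6$ nontrivially, the $\Gal(\overline\Q/\Q)$-orbit of $[\tau]$ has full size $2h(d)$, whence $\Q(t(\tau))=H$.

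For the \emph{Moreover} part I would first record that, because $\mathrm{Cl}(d)$ is an elementary $2$-group, inversion acts trivially on it, so $\Gal(H/\Q)\cong\mathrm{Cl}(d)\times\langle c\rangle$ is again an elementary $2$-group and $H/\Q$ is abelian. By (i) and (ii) every $t(\tau)$, and hence $x(\tau_1,\tau_2)$ and $y(\tau_1,\tau_2)$, lies in $H$; these values are rational exactly when they are fixed by $\Gal(H/\Q)$. Since $x$ and $y$ are rational functions of $t_1,t_2$ with rational coefficients, any $\sigma\in\Gal(H/\Q)$ acts on the pair $(P_1,P_2)=([\tau_1],[\tau_2])$ \emph{diagonally}, and it suffices to show that $(\sigma P_1,\sigma P_2)$ lies in the orbit of $(P_1,P_2)$ under the group $G'$ generated by the simultaneous Atkin-Lehner action $(P_1,P_2)\mapsto(wP_1,wP_2)$, $w\in W$, and the interchange $(P_1,P_2)\mapsto(P_2,P_1)$, because $x$ and $y$ are $G'$-invariant by bimodularity. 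The essential input is the classical dictionary from CM theory: on CM-points the involutions $w_2$ and $w_3$ realize the Artin automorphisms $\sigma_{[\mathfrak p_2]}$ and $\sigma_{[\mathfrak p_3]}$ of the primes above $2$ and $3$, while $c$ is handled by the computation in (i)/(ii).

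The heart of the argument is then purely group-theoretic. Let $G_W=\langle[\mathfrak p_2],[\mathfrak p_3]\rangle\le\mathrm{Cl}(d)$ be the subgroup realized by $W$. If $\sigma$ restricts to an element of $G_W$, its diagonal action is a simultaneous Atkin-Lehner translation and so fixes $x$ and $y$; the generator $c$ is absorbed by (i)/(ii). It remains to treat a class $[\mathfrak q]\in\mathrm{Cl}(d)\setminus G_W$. Writing the class-group action additively on the $\mathrm{Cl}(d)$-torsor of CM-points, one has $[\mathfrak q]P_1-[\mathfrak q]P_2=P_1-P_2$, of order dividing $2$; a short computation then produces $w\in W$ with $(\sigma P_1,\sigma P_2)=(wP_2,wP_1)$ precisely when $[\mathfrak q]+(P_1-P_2)\in G_W$, i.e. when $P_1$ and $P_2$ lie in \emph{different} $G_W$-cosets, equivalently in different $W$-orbits. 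A genus-theory count, combined with Ogg's formula $|\CM(d)|=h(d)\,\nu_2(d)\nu_3(d)$, shows that $[\mathrm{Cl}(d):G_W]=1$ exactly when $|\CM(d)|\in\{2,4\}$ and $[\mathrm{Cl}(d):G_W]=2$ when $|\CM(d)|=8$; in the former case no class outside $G_W$ exists and every pair works, while in the latter the single nontrivial coset forces the restriction that $\tau_2$ not lie in the $W$-orbit of $\tau_1$. The argument for $y$ is identical, as it too is a $G'$-invariant rational function of $t_1,t_2$.

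The hardest part, and the place I would be most careful, is assembling the precise CM-theoretic dictionary: that $w_2,w_3$ act on $\CM(d)$ as the Artin symbols $\sigma_{[\mathfrak p_2]},\sigma_{[\mathfrak p_3]}$, and that complex conjugation is given by $w_6$ together with class inversion, with the dichotomy $6\mid d$ versus $6\nmid d$ controlling whether conjugation fixes the point. Matching these statements to Ogg's local factors $\nu_p(d)$ and to the genus-theoretic description of $G_W\le\mathrm{Cl}(d)$ is the delicate bookkeeping on which the orbit-size dichotomy, and hence the exact hypotheses in the three cases $|\CM(d)|=2,4,8$, ultimately rests.
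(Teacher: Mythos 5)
Your strategy for the \emph{Moreover} part is the same as the paper's: exploit that $x$ and $y$ are invariant both under the diagonal action of $W$ and under interchanging $\tau_1\leftrightarrow\tau_2$, realize the interchange by the Galois element carrying $t(\tau_1)$ to $t(\tau_2)$ (an involution because the class group is an elementary $2$-group), observe that this element lies outside the subgroup realized by $W$ precisely when $\tau_2\notin W\tau_1$, and finally use reality to descend from $K$ to $\Q$. That skeleton is correct and is exactly how the paper argues. (For parts (i) and (ii) the paper simply cites Theorem 5.2 of Gonz\'alez--Rotger rather than rederiving them; your sketch of that step is plausible in outline but would need the full Shimura reciprocity for non-maximal orders to justify $K(t(\tau))=H$ and the size of the $\Gal(\overline\Q/\Q)$-orbit.)

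The genuine gap is in the CM-theoretic dictionary on which your ``purely group-theoretic'' step rests. You assert that $w_2$ and $w_3$ always act on $\CM(d)$ as the Artin symbols $\sigma_{[\mathfrak p_2]},\sigma_{[\mathfrak p_3]}$ and that complex conjugation is $w_6$ composed with class inversion. This is only correct when $6\mid d$. If $2\nmid d$ (resp. $3\nmid d$) and $\CM(d)\neq\emptyset$, then $2$ (resp. $3$) splits in $K$, $\nu_2(d)\nu_3(d)>1$, and $\CM(d)$ is a union of several $\Gal(H/K)$-orbits rather than a single $\mathrm{Cl}(d)$-torsor; the Atkin--Lehner involution at the split prime does \emph{not} lie in the image of $\Gal(H/K)$ but instead permutes these orbits and realizes complex conjugation composed with an Artin symbol (this is Lemma 5.10 of Gonz\'alez--Rotger, which the paper invokes for exactly this reason). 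Your identification of $W$-orbits with $G_W$-cosets, and the computation $[\mathfrak q]+(P_1-P_2)\in G_W$, therefore do not apply as written in those cases; and the paper's own Example for $d=-20$ shows complex conjugation realized by $w_3$, not $w_6$. So the case $6\mid d$ of your argument goes through in essentially the paper's way, but the cases $6\nmid d$ (and the pairs $\tau_1,\tau_2$ lying in different $\Gal(H/K)$-orbits of $\CM(d)$) require the corrected dictionary and a separate, if similar, argument --- which is the ``delicate bookkeeping'' you flagged but did not actually supply.
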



\begin{proof} The assertion about $|\CM(d)|$ follows from the previous
  lemma, while that about $\Q(t(\tau))$ follows from Theorem 5.2 of
  \cite{Gonzalez-Rotger}. For the assertion about
  rationality of $x$ and $y$, here we will only prove the case
  $|\CM(d)|=8$ since the proof of the case $|\CM(d)|=2$ or $|\CM(d)|=4$
  is similar and simpler.



  Assume that $|\CM(d)|=8$ and the class group of $R$ is an elementary
  $2$-group. When $6|d$, by Lemma 5.9 of
  \cite{Gonzalez-Rotger}, the action of each element of $W$ coincides
  with that of some element of $\Gal(H/K)$. Say, $G$ is the subgroup
  of $\Gal(H/K)$ corresponding to $W$. Let $\tau_1$ and $\tau_2$
  be elements in $\CM(d)$ such that $\tau_2$ is not in the $W$-orbit
  of $\tau_1$. Consider $x(\tau_1,\tau_2)$. Since
  $x(w\tau_1,w\tau_2)=x(\tau_1,\tau_2)$ for all $w\in W$, we have
  $x(\tau_1,\tau_2)^\sigma=x(\tau_1,\tau_2)$ for all $\sigma\in W$.
  Let $\sigma'$ be the element in $\Gal(H/K)$ such that
  $t(\tau_1)^{\sigma'}=t(\tau_2)$. Since the class group is assumed to
  be an elementary $2$-group, we also have
  $t(\tau_2)^{\sigma'}=t(\tau_1)$. It follows that
  $x(\tau_1,\tau_2)^{\sigma'}=x(\tau_2,\tau_1)=x(\tau_1,\tau_2)$ as
  well. Therefore, $x(\tau_1,\tau_2)$ is invariant under
  $\Gal(H/K)$. By the first half of the lemma, we have
  $x(\tau_1,\tau_2)\in K\cap\R=\Q$. The same conclusion holds for
  $y(\tau_1,\tau_2)$. When $(d,6)=3$, by Lemmas 5.9 and 5.10 of
  \cite{Gonzalez-Rotger}, the action of $w_3$ coincides with some
  element of $\Gal(H/K)$ and that of $w_2$ coincides with complex
  conjugation. By a similar reasoning as in the case $6|d$, we see
  that $x(\tau_1,\tau_2)$ and $y(\tau_1,\tau_2)$
  are rational numbers for all $\tau_1,\tau_2\in\CM(d)$ such that
  $\tau_2$ is not in the $W$-orbit of $\tau_1$.
  The proof of the remaining cases is omitted.
\end{proof}

\begin{Example} \label{example: 6}
\begin{enumerate}
  \item[(i)] The class group of the quadratic order of
    discriminant $-420$ is an elementary $2$-group of order $8$.
    We represent the CM-points $\tau$ by their corresponding quadratic
    forms $[A,B,C]:=Ax^2+Bxy+Cy^2$ (i.e.,
    $\tau=(-B+\sqrt{B^2-4AC})/2A$). The $8$ CM-points of
    discriminant $-420$ are represented by
    \begin{equation*}
      \begin{split}
        Q_1=[6,6,19],~Q_2=[30,30,11],&
        ~Q_3=[42,42,13],~Q_4=[66,30,5],\\
        Q_5=[78,42,7],~Q_6=[114,6,1],&
        ~Q_7=[210,210,53],~Q_8=[318,210,35]
      \end{split}
    \end{equation*}
    The actions of $w_2$, $w_3$, and $w_6$ map $Q_1$ to $Q_8$, $Q_7$,
    and $Q_6$, respectively. Thus, we choose
    \begin{equation} \label{equation: tau 6}
    \tau_1=\frac{-6+\sqrt{-420}}{12}, \qquad
    \tau_2=\frac{-30+\sqrt{-420}}{60},
    \end{equation}
    corresponding to $Q_1$ and $Q_2$, respectively. Since
    $-420=-3\times140=-7\times 60=-20\times 21$, we expect that the
    values of $t$ at CM-points of discriminant $-420$ belong to
    $\Q(\sqrt3,\sqrt5,\sqrt7)$. Indeed, for the case
    $(a,b,c)=(-17,-6,72)$ and
    $t(\tau)=\eta(2\tau)\eta(6\tau)^5/\eta(\tau)^5\eta(3\tau)$, we
    find
    \begin{equation*}
      \begin{split}
    t(\tau_1)&=\frac{3\sqrt3-5}{12}\cdot\frac{5-\sqrt{21}}2
    \cdot\frac{\sqrt7-\sqrt3}2\cdot
    (\sqrt{21}-2\sqrt5)(4-\sqrt{15}), \\
    t(\tau_2)&=\frac{3\sqrt3+5}{12}\cdot\frac{5-\sqrt{21}}2
    \cdot\frac{\sqrt7-\sqrt3}2\cdot
    (\sqrt{21}-2\sqrt5)(4+\sqrt{15}),
      \end{split}
    \end{equation*}
    and
    $$
      x(\tau_1,\tau_2)=-\frac{71}{1008}, \qquad
    y(\tau_1,\tau_2)=\frac1{142^2}.
    $$
    They are indeed rational numbers.
  \item[(ii)] Let $d=-20$. The $4$ CM-points $\tau_j$, $j=1,\ldots,4$
    of discriminant $-20$ are represented by
    $$
    Q_1=[6,2,1],~Q_2=[6,-2,1],~Q_3=[18,14,3],~Q_4=[18,-14,3].
    $$
    The actions of $w_2$, $w_3$, and $w_6$ map $Q_1$ to $Q_3$, $Q_2$,
    and $Q_4$, respectively. In the case $(a,b,c)=(-17,-6,72)$, we
    have
    $$
    t(\tau_1)=\frac{-2+\sqrt5-i(2+\sqrt5)}{36}, \qquad
    t(\tau_3)=\frac{-2-\sqrt5+i(2-\sqrt5)}{36},
    $$
    $t(\tau_2)=\overline{t(\tau_1)}$, and
    $t(\tau_4)=\overline{t(\tau_3)}$. We find
    $$
    x(\tau_1,\tau_2)=\infty, \qquad
    x(\tau_1,\tau_3)=-\frac9{80}, \qquad
    x(\tau_1,\tau_4)=-\frac18,
    $$
    and
    $$
    y(\tau_1,\tau_2)=\frac14, \qquad
    y(\tau_1,\tau_3)=y(\tau_1,\tau_4)=\frac1{324}.
    $$
  \item[(iii)] Occasionally, there are $\tau_1\in\CM(d_1)$ and
    $\tau_2\in\CM(d_2)$ with $d_1\neq d_2$ and
    $\Q(\sqrt{d_1})=\Q(\sqrt{d_2})$ such that $x(\tau_1,\tau_2)$
    and $y(\tau_1,\tau_2)$ are rational numbers. For example, let
    $d_1=-12$ and $d_2=-48$. By the lemma above, $t(\tau)\in\Q$ for
    $\tau\in\CM(-12)$. Since $h(-48)=2$ and $|\CM(-48)|=4$, the
    Atkin-Lehner group $W$ acts on $\CM(-48)$ transitively. It follows
    that $x(\tau_1,\tau_2)$ and $y(\tau_1,\tau_2)$ are rational for
    $\tau_1\in\CM(-12)$ and $\tau_2\in\CM(-48)$. For example, let
    $$
    \tau_1=\frac{-3+\sqrt{-3}}6, \qquad
    \tau_2=\frac{\sqrt{-3}}6,
    $$
    with corresponding quadratic forms $[6,6,2]$ and $[12,0,1]$.
    In the case $(a,b,c)=(-17,-6,72)$, we have
    $$
    t(\tau_1)=-\frac1{12}, \qquad
    t(\tau_2)=\frac{1+\sqrt3}6,
    $$
    and
    $$
    x(\tau_1,\tau_2)=-\frac5{36}, \qquad
    y(\tau_1,\tau_2)=-\frac1{50}.
    $$
  \end{enumerate}
\end{Example}

\paragraph{\bf Case $\Gamma=\Gamma_0(8)$} We next consider the case
$(a,b,c)=(12,4,32)$ with $\Gamma=\Gamma_0(8)$ and $G$ generated by
$\Gamma$, $w_8$, and $\SM4184$. As before, we let $\CM(d)$ denote the
set of CM-points of discriminant $d$ on $X_0(8)$. For our purpose, we
need to know how $w_8$, $\SM4184$, and the complex conjugation act on
$\CM(d)$. In the subsequent discussion, we let $d$ be a negative
discriminant, $K=\Q(\sqrt d)$, and $R$, $h(d)$, and $H$ be the
quadratic order of discriminant $d$, its class number, and its ring
class field, respectively. Write $d=r^2d_0$, where $d_0$ is a
fundamental discriminant.

Recall that the Galois group $\Gal(H/K)$ acts on the set $\CM(d)$
through the Shimura reciprocity and each $\Gal(H/K)$-orbit contains
$h(d)$ points. Moreover, two CM-points lie in the same
$\Gal(H/K)$-orbit if and only if their normalized optimal embeddings
are locally equivalent at every finite place. Hence
$$
  |\CM(d)|=h(d)\prod_{p\text{ primes}}\nu_p(d)
$$
where $\nu_p(d)$ is the number of optimal embeddings of discriminant
$d$ from $K$ into $\O_p:=\SM\Z\Z{8\Z}\Z\otimes_\Z\Z_p$ modulo
conjugation by $\O^\times_p$ (see \cite[Section 5, Chapter
III]{Vigneras}). Since $\O_p=\SM{\Z_p}{\Z_p}{\Z_p}{\Z_p}$ for odd
primes $p$, we have $\nu_p(d)=1$ for odd primes $p$ and
$|\CM(d)|=\nu_2(d)h(d)$. We now determine $\nu_2(d)$ and describe the
$\Gal(H/K)$-orbits of $\CM(d)$ and how $w_8$, $\SM4184$, and the
complex conjugation act on them. (Note that one can obtain a formula
for $\nu_2(d)$ from Theorem 2 of \cite{Ogg}.)

\begin{Lemma} \label{lemma: actions 8}
  In the following, we represent a CM-point by the
  corresponding quadratic form $[A,B,C]:=Ax^2+Bxy+Cy^2$ with
  $B^2-4AC=d$, $8|A$, and $(A/8,B,C)=1$ (i.e., $\tau=(-B+\sqrt
  d)/2A$).
  \begin{enumerate}
    \item[(i)] If $d_0\equiv 5\mod 8$ and $4\nmid r$ or if $4|d_0$ and
      $2\nmid r$, then $|\CM(d)|=0$.
    \item[(ii)] Assume that $d\equiv 1\mod 8$. Let $b$ be an integer
      such that $b^2\equiv d\mod 32$. Then $\nu_2(d)=2$ and the two
      $\Gal(H/K)$-orbits of $\CM(d)$ are
      $$
      \{[A,B,C]:B\equiv b\mod 16\}, \qquad
      \{[A,B,C]:B\equiv -b\mod 16\}.
      $$
      The actions of $w_8$ and the complex conjugation swap the two
      orbits. The action of $\SM4184$ maps the two orbits to the first
      two orbits of $\CM(4d)$ in (iii).
    \item[(iii)] Assume that $d_0\equiv 1\mod 8$ and $2\|r$. Let
      $b_1$ and $b_2$ be integers such that
      $b_1^2-d_0\equiv 8\mod 16$ and $b_2^2-d_0\equiv 0\mod 16$. Then
      $\nu_2(d)=6$ and the six $\Gal(H/K)$-orbits are
      \begin{equation*}
        \begin{split}
          C_1:&\{[A,B,C]:B\equiv 2b_1\mod 16\}, \\
          C_2:&\{[A,B,C]:B\equiv -2b_1\mod 16\}, \\
          C_3:&\{[A,B,C]:B\equiv 2b_2\mod 16,~C\text{ even}\}, \\
          C_4:&\{[A,B,C]:B\equiv 2b_2\mod 16,~C\text{ odd}\}, \\
          C_5:&\{[A,B,C]:B\equiv -2b_2\mod 16,~C\text{ even}\}, \\
          C_6:&\{[A,B,C]:B\equiv -2b_2\mod 16,~C\text{ odd}\}.
        \end{split}
      \end{equation*}
      The action of $w_8$ interchanges $C_1$ with $C_2$, $C_3$ with
      $C_6$, and $C_4$ with $C_5$. The action of the complex
      conjugation interchanges $C_1$ with $C_2$, $C_3$ with $C_5$, and
      $C_4$ with $C_6$. The action of $\SM4184$ interchanges $C_3$
      with $C_5$, $C_4$ with $C_6$, and maps $C_1$ and $C_2$ to the
      two orbits of $\CM(d/4)$ in (ii).
    \item[(iv)] If $d_0$ is odd and $4\|r$, then $\nu_2(d)=4$ and
      the four $\Gal(H/K)$-orbits are
      \begin{equation*}
        \begin{split}
          C_1:&\{[A,B,C]:B\equiv 4\mod 16,~C\text{ even}\}, \\
          C_2:&\{[A,B,C]:B\equiv 4\mod 16,~C\text{ odd}\}, \\
          C_3:&\{[A,B,C]:B\equiv -4\mod 16,~C\text{ even}\}, \\
          C_4:&\{[A,B,C]:B\equiv -4\mod 16,~C\text{ odd}\}.
        \end{split}
      \end{equation*}
      The action of $w_8$ interchanges $C_1$ with $C_4$ and $C_2$ with
      $C_3$. The action of the complex conjugation interchanges $C_1$
      with $C_3$ and $C_2$ with $C_4$. The action of $\SM4184$
      interchanges $C_1$ with $C_2$ and $C_3$ with $C_4$.
    \item[(v)] If $4\|d_0$ and $2\|r$, then $\nu_2(d)=2$ and
      the two $\Gal(H/K)$-orbits are
      $$
      \{[A,B,C]:B\equiv 4\mod 16\}, \qquad
      \{[A,B,C]:B\equiv-4\mod 16\}.
      $$
      All $w_8$, $\SM4184$, and the complex conjugation switch the two
      orbits.
    \item[(vi)] If $8|d_0$ and $2\|r$, then $\nu_2(d)=2$ and the two
      $\Gal(H/K)$-orbits are
      $$
      \{[A,B,C]:B\equiv 0\mod 16\}, \qquad
      \{[A,B,C]:B\equiv 8\mod 16\}.
      $$
      All $w_8$, $\SM4184$, and the complex conjugation fix every
      orbit.
    \item[(vii)] If $64|d$, then $\nu_2(d)=4$ and the four
      $\Gal(H/K)$-orbits are
      \begin{equation*}
        \begin{split}
          C_1:&\{[A,B,C]:B\equiv 0\mod 16,~C\text{ even}\}, \\
          C_2:&\{[A,B,C]:B\equiv 0\mod 16,~C\text{ odd}\}, \\
          C_3:&\{[A,B,C]:B\equiv 8\mod 16,~C\text{ even}\}, \\
          C_4:&\{[A,B,C]:B\equiv 8\mod 16,~C\text{ odd}\}.
        \end{split}
      \end{equation*}
      The action of $w_8$ interchanges of $C_1$ with $C_2$ and $C_3$
      with $C_4$. The action of $\SM4184$ interchanges $C_1$ with
      $C_4$ and $C_2$ with $C_3$. The complex conjugation fixes every
      orbit.
  \end{enumerate}
  In particular,
  \begin{enumerate}
    \item[(i)] if $32\nmid d$, then $\Q(t(\tau))=H$ for all
      $\tau\in\CM(d)$, and
    \item[(ii)] if $32|d$ and the class group of $R$ is an elementary
      $2$-group, then $\Q(t(\tau))=H\cap\R$ for all
      $\tau\in\CM(d)$.
  \end{enumerate}
\end{Lemma}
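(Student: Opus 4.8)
The plan is to reduce the entire statement to a purely $2$-adic computation. Since $\O_p=M(2,\Z_p)$ for every odd prime $p$, all optimal embeddings are locally conjugate there, so $\nu_p(d)=1$ and, as recorded just before the lemma, the number of $\Gal(H/K)$-orbits on $\CM(d)$ equals $\nu_2(d)$, with each orbit carrying exactly $h(d)$ points. First I would classify, up to $\O_2^\times$-conjugacy, the optimal embeddings of the local quadratic order of discriminant $d$ into the Eichler order $\O_2=\SM{\Z_2}{\Z_2}{2^3\Z_2}{\Z_2}$ of level $2^3=8$. This classification splits according to $v_2(d)$ together with the class of $d_0$ modulo $8$, and this case division is precisely (i)--(vii): in each case one reads off the value of $\nu_2(d)$ and a complete set of local conjugacy invariants. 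The resulting values of $\nu_2(d)$ can be cross-checked against Theorem 2 of \cite{Ogg}, and the general machinery is that of \cite[Chapter III]{Vigneras}.

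Next I would translate these local invariants into congruence conditions on quadratic-form representatives. Writing each CM-point as $\tau=(-B+\sqrt d)/2A$ for a form $[A,B,C]$ with $8\mid A$ and $(A/8,B,C)=1$, the $2$-adic conjugacy type is encoded by the residue of $B$ modulo $16$, supplemented by the parity of $C$ in exactly those cases where $A/8$ is even. The point to verify is that two such forms define CM-points lying in the same $\Gal(H/K)$-orbit precisely when these invariants agree; this is the local--global principle for optimal embeddings together with the Shimura reciprocity law, in complete analogy with the treatment of $X_0(6)$ via \cite{Ogg} and \cite{Gonzalez-Rotger}. This yields the explicit orbit descriptions $C_1,\dots$ listed in each part.

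The third and most delicate step is to compute the three prescribed actions on these orbits. Complex conjugation and $w_8$ are routine: conjugation sends $[A,B,C]\mapsto[A,-B,C]$, that is, $B\mapsto -B$ with $C$ fixed, while $w_8=\SM0{-1}80$ sends $\tau\mapsto-1/(8\tau)$, and its effect on $B\bmod 16$ and on the parity of $C$ is read off directly from the transformed form. The element $\SM4184$ is the subtle one, because it does \emph{not} preserve the discriminant. Using the factorization $\SM4184\tau=\SM1021(2\tau+1/2)$ already recorded in the proof of Proposition \ref{proposition: sporadic bimodular}, the inner doubling $\tau\mapsto 2\tau$ multiplies the discriminant by $4$ while the $\SL(2,\Z)$-part $\SM1021$ preserves it; one checks moreover that $\SM4184$ is an involution on $X_0(8)$ (indeed $\SM4184^2=8\,\SM3183$ with $\SM3183\in\Gamma_0(8)$). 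Consequently $\SM4184$ sends the ``least $2$-divisible'' orbits of $\CM(d)$ up to orbits of $\CM(4d)$, while on the ``most $2$-divisible'' orbits the quadrupled form becomes imprimitive and reduces, sending them down to $\CM(d/4)$; this is exactly the mixed behavior in (ii)--(vii). I would track this by following how the doubling acts on $[A,B,C]$ and then applying $\SM1021$, matching source and target orbits case by case. The hard part will be keeping the $2$-adic bookkeeping consistent across the discriminant-changing involution $\SM4184$ and confirming that the induced permutations of the $\nu_2(d)$ orbits are exactly those asserted.

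Finally, the two ``in particular'' assertions about $\Q(t(\tau))$ follow from the orbit picture just established, via the field-of-definition result \cite[Theorem 5.2]{Gonzalez-Rotger}. The dividing line is whether complex conjugation fixes the orbits, which happens precisely when $v_2(d)\ge 5$, i.e.\ when $32\mid d$. When $32\nmid d$ (cases (ii)--(v)) complex conjugation swaps orbits, so $t(\tau)$ is non-real and generates the full ring class field, giving $\Q(t(\tau))=H$; when $32\mid d$ and the class group of $R$ is an elementary $2$-group, cases (vi) and (vii) show that complex conjugation fixes every orbit, forcing $t(\tau)$ to be real and hence $\Q(t(\tau))=H\cap\R$.
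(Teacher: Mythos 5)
Your proposal follows essentially the same route as the paper's proof: reduce to the classification of optimal embeddings into $\O_2=\SM{\Z_2}{\Z_2}{8\Z_2}{\Z_2}$ up to $\O_2^\times$-conjugacy, translate the local invariants into congruences on $B\bmod 16$ and the parity of $C$, compute the actions of $w_8$, complex conjugation, and $\SM4184$ explicitly, and deduce the field of definition of $t(\tau)$ from whether complex conjugation preserves the $\Gal(H/K)$-orbits. The only cosmetic difference is that you track $\SM4184$ via the factorization $\SM4184\tau=\SM1021(2\tau+1/2)$ acting on quadratic forms, whereas the paper conjugates the local embedding matrix $\SM ab{8c}{-a}$ by $\SM4184$ directly; these are equivalent computations.
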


\begin{proof}
  For $a\in\Z_2$, let $v_2(a)$ denote the $2$-adic valuation of $a$.
  Let $\phi:R\to\O_2$, $\O_2=\SM{\Z_2}{\Z_2}{8\Z_2}{\Z_2}$,
  be an optimal embedding of
  discriminant $d$. We remark that $\phi$ is completely determined by
  $\phi(\sqrt d)$.

  Consider the case $d$ is odd first. We have
  $$
  \phi(\sqrt d)=\M ab{8c}{-a}
  $$
  for some $a,b,c\in\Z_2$ with $a^2+8bc=d$. It is clear that if $d$ is
  not congruent to $1$ modulo $8$, then no such $a$, $b$, $c$ exist
  and hence $\nu_2(d)=0$. Now assume that $d\equiv 1\mod 8$. Note that
  since $\phi$ is an optimal embedding of discriminant $d$, we must
  have $\phi((1+\sqrt d)/2)\in\O_2$. In other words, we have $2|b,c$
  and $a^2\equiv d\mod 32$. Now we check by direct computation the
  following properties.
  \begin{enumerate}
  \item[(i)] Among the residue classes modulo $16$, there are
    precisely two residue classes $a$ such that $a^2\equiv d\mod 32$.
  \item[(ii)] There exists $k\in\Z_2$ such that the $2$-adic valuation
    of the $(1,2)$-entry of
    $$
    \M1k01\M ab{8c}{-a}\M1{-k}01
    $$
    is $1$. In other words, $\phi$ is $\O_2^\times$-equivalent to the
    one given by $\sqrt d\mapsto\SM ab{8c}{-a}$ with $a^2\equiv d\mod
    32$, $2\|b$ and $2|c$, which we assume from now on.
  \item[(iii)] Let $\SM ab{8c}{-a}$ be the matrix from (ii). We have
    $$
    \M{b/2}001^{-1}\M ab{8c}{-a}\M{b/2}001=\M a2{8bc}{-a}.
    $$
    Hence $\phi$ is $\O_2^\times$-equivalent to $\sqrt d\mapsto\SM
    a2{(d-a^2)/2}{-a}$.
  \item[(iv)] Two optimal embeddings $\sqrt
    d\to\SM{a_1}2{(d-a_1^2)/2}{-a_1}$ and $\sqrt
    d\to\SM{a_2}2{(d-a_2^2)/2}{-a_2}$ are $\O_2^\times$-equivalent if
    and only if $a_1\equiv a_2\mod 16$.
  \end{enumerate}
  Based on these properties, we conclude that if $d\equiv 1\mod 8$,
  then there are two $\Gal(H/K)$-orbits of CM-points of discriminant
  $d$ and two points in $\CM(d)$ are in the same $\Gal(H/K)$-orbit if
  and only if their corresponding quadratic forms
  $A_1x^2+B_1xy+C_1y^2$ and $A_2x^2+B_2xy+C_2y^2$ satisfy $B_1\equiv
  B_2\mod 16$.

  We next consider the case $4|d$. We must have $\phi(\sqrt
  d/2)\in\O_2$, say
  $$
  \phi(\sqrt d/2)=\M ab{8c}{-a}
  $$
  for some $a,b,c\in\O_2$ satisfying $a^2+8bc=d/4$.
  If $d/4$ is odd, then $a$ must be odd. If $d/4\equiv 5\mod 8$, no
  such $a$, $b$, and $c$ can exist and hence $\nu_2(d)=0$. When
  $d/4\equiv 1\mod 8$, it is clear that for each odd element $a$ of
  $\O_2$, there are $b$ and $c$ in $\O_2$ such that $a^2+8bc=d/4$.
  We now check the following properties.
  \begin{enumerate}
    \item[(i)] Among the $4$ odd residue classes modulo $8$, there are
      exactly $2$ classes $a$ such that $8\|(d/4-a^2)$ and the other
      two classes satisfy $16|(d/4-a^2)$.
    \item[(ii)] In the case $8\|(d/4-a^2)$, $\phi$ is
      $\O_2^\times$-equivalent to $\sqrt d/2\mapsto\SM
      a1{d/4-a^2}{-a}$. In the case $16|(d/4-a^2)$, $\phi$ is
      $\O_2^\times$-equivalent to $\sqrt d/2\mapsto\SM
      a1{d/4-a^2}{-a}$ or $\sqrt d/2\mapsto\SM a{(d/4-a^2)/8}8{-a}$
      and the two are inequivalent.
    \item[(iii)] Two optimal embeddings $\sqrt
      d/2\mapsto\SM{a_1}1{(d/4-a_1^2)}{-a_1}$ and $\sqrt
      d/2\mapsto\SM{a_2}1{(d/4-a_2^2)}{-a_2}$ are
      $\O_2^\times$-equivalent if and only if $a_1\equiv a_2\mod 8$.
      The same property also holds for embeddings of the form $\sqrt
      d/2\mapsto\SM a{(d/4-a^2)/8}8{-a}$.
  \end{enumerate}
  It follows that $\nu_2(d)=6$. Also, two CM-points of discriminant
  $d$ are in the same $\Gal(H/K)$-orbit if and only if their
  corresponding quadratic forms $A_1x^2+B_1xy+C_1y^2$ and
  $A_2x^2+B_2xy+C_2y^2$ satisfy $B_1\equiv B_2\mod 8$ and $C_1\equiv
  C_2\mod 2$.

  The proof of the remaining cases is similar. We skip the details
  and just summarize our findings as follows.
  \begin{enumerate}
    \item[(i)] If $8\|d$, no $a,b,c\in\Z_2$ can satisfy
      $a^2+8bc=d/4$. Thus, $\nu_2(d)=0$.
    \item[(ii)] If $16\|d$ and $d/16\equiv 3\mod 4$ (i.e. $2\|r$),
      then any optimal embedding $\phi$ is $\O_2^\times$-equivalent
      to $\sqrt d/2\mapsto\M21{d/4-4}{-2}$ or $\sqrt
      d/2\mapsto\M{-2}1{d/4-4}{2}$ and the two are not
      $\O_2^\times$-equivalent. (Note that $8\|(d/4-4)$.) Hence
      $\nu_2(d)=2$.
    \item[(iii)] If $32\|d$, then $2\|r$ and any optimal embedding
      is $\O_2^\times$-equivalent to $\sqrt d/2\mapsto\SM01{d/4}0$
      or $\sqrt d/2\mapsto\SM41{d/4-16}{-4}$ and the two are not
      $\O_2^\times$-equivalent. Hence $\nu_2(d)=2$.
    \item[(iv)] If $64|d$, then $4|r$ and any optimal embedding is
      $\O_2^\times$-equivalent to one of $\sqrt
      d/2\mapsto\SM01{d/4}0$, $\sqrt d/2\mapsto\SM0{d/32}80$, $\sqrt
      d/2\mapsto\SM41{d/4-16}{-4}$, and $\sqrt
      d/2\mapsto\SM4{d/32-2}8{-4}$. Hence $\nu_2(d)=4$.
  \end{enumerate}

  Now the action $w_8$ maps the quadratic form $[A,B,C]$ to
  $[8C,-B,A/8]$ and the action of the complex conjugation maps
  $[A,B,C]$ to $[A,-B,C]$. Finally, we compute that
  $$
  \M4184\M ab{8c}{-a}\M4184^{-1}
  =\M{3a-4b+4c}{-a+2b-c}{8a-8b+16c}{-3a+4b-4c}.
  $$
  From these, we easily obtain our description of the actions in each
  case. Note that when $R$ is an elementary $2$-group, the ring class
  field $H$ is of the form $\Q(\sqrt d,\sqrt{a_1},\ldots,\sqrt{a_k})$
  for some positive rational numbers $a_1,\ldots,a_k$. In the case
  $32|d$, since the complex conjugation fixes every $\Gal(H/K)$-orbit
  and $\Gal(H/K)$ acts transitively on each orbit, we must have
  $t(\tau)\in\Q(\sqrt{a_1},\ldots,\sqrt{a_k})$ for all
  $\tau\in\CM(d)$.
This complete the proof of the lemma.
\end{proof}

\begin{Corollary} \label{corollary: rationality 8}
  Let $d$ be a negative discriminant such that the set
  $\CM(d)$ of CM-points of discriminant $d$ on $X_0(8)$ is
  nonempty and write $d=r^2d_0$, where $d_0$ is a fundamental
  discriminant. Let $R$, $h(d)$, and $H$ be the quadratic order of
  discriminant $d$, its class number, and its ring class field,
  respectively. Let $W$ be the subgroup of
  $N(\Gamma_0(8))/\Gamma_0(8)$ generated by the Atkin-Lehner
  involution $w_8$ and $\SM4184$.
  \begin{enumerate}
  \item[(i)] Assume that $d\equiv 1\mod 8$ and $h(d)=1$ (i.e.,
    $d=-7$). Let $\tau_1$ and $\tau_2$ be any two points in $\CM(d)$
    and the first two orbits of $\CM(4d)$ described in the previous
    lemma, or let $\tau_1$ and $\tau_2$ be any two points in the last
    four orbits of $\CM(4d)$, or let $\tau_1$ and $\tau_2$ be any two
    points in $\CM(16d)$ such that $\tau_2$ is not in the $W$-orbit of
    $\tau_1$.
  \item[(ii)] Assume that $d\equiv 1\mod 8$ and $h(d)=2$. Let $\tau_1$
    and $\tau_2$ be any two points in $\CM(d)$ and the first two
    orbits of $\CM(4d)$ described in the previous lemma such that
    $\tau_2$ is not in the $W$-orbit of $\tau_1$, or let $\tau_1$ and
    $\tau_2$ be any two points in the last four orbits of $\CM(4d)$
    such that $\tau_2$ is not in the $W$-orbit of $\tau_1$.
  \item[(iii)] Assume that $4\|d_0$ and $2\|r$. Assuming that $h(d)=1$
    or $h(d)=2$, let $\tau_1$ and $\tau_2$ be any two points in
    $\CM(d)$, or assuming that the class group of $R$ is the Klein
    $4$-group, let $\tau_1$ and $\tau_2$ be any two points in $\CM(d)$
    such that $\tau_2$ is not in the $W$-orbit of $\tau_1$.
  \item[(iv)] Assume that $8|d_0$ and $2\|r$. Assuming that the class
    group of $R$ is an elementary $2$-group of order $2$ or $4$,
    let $\tau_1$ and $\tau_2$ be any two points in the same
    $\Gal(H/K)$-orbit in $\CM(d)$, or
    assuming that the class group of
    $R$ is an elementary $2$-group of order $8$, let $\tau_1$ and
    $\tau_2$ be two points in the same $\Gal(H/K)$-orbit in $\CM(d)$
    such that $\tau_2$ is not in the $W$-orbit of $\tau_1$.
  \item[(v)] Assume that $d=-64$. Let $\tau_1$ and $\tau_2$ be any two
    points in $\CM(-64)$ such that $\tau_2$ is not in the $W$-orbit of
    $\tau_1$.
  \end{enumerate}
  Then the values of $x(\tau_1,\tau_2)$ and $y(\tau_1,\tau_2)$ are
  rational numbers.
\end{Corollary}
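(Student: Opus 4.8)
The plan is to show that the algebraic numbers $x(\tau_1,\tau_2)$ and $y(\tau_1,\tau_2)$ are fixed by the full Galois group of an abelian extension of $\Q$, which forces them into $\Q$. First I would record that $x$ and $y$ are rational functions, with coefficients in $\Q$, of $t_1=t(\tau_1)$ and $t_2=t(\tau_2)$, and that, by the two assertions at the end of Lemma \ref{lemma: actions 8}, each value $t(\tau)$ occurring here lies in the ring class field $H$ of the corresponding order; since in every case under consideration the class group is an elementary $2$-group (or has order at most $2$), $H$ is the genus field, a multiquadratic and hence abelian extension of $\Q$. Taking $H$ to be the compositum of the finitely many ring class fields involved (the discriminants $d$, $4d$, $16d$ all share $K=\Q(\sqrt{d_0})$), we obtain $x(\tau_1,\tau_2),y(\tau_1,\tau_2)\in H$ with $H/\Q$ abelian. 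It then suffices to prove that both values are fixed by every $\rho\in\Gal(H/\Q)$, and the argument for $y$ is word-for-word the same as for $x$.

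The mechanism for producing invariances is to match each arithmetic symmetry with a geometric one. For $\rho\in\Gal(H/\Q)$ we have $x(\tau_1,\tau_2)^\rho=x(t_1^\rho,t_2^\rho)$, that is, $\rho$ acts by the \emph{diagonal} Galois action on the pair $(\tau_1,\tau_2)$. On the other hand, $x$ is bimodular on $(\Gamma_0(8),G)$, hence unchanged under the diagonal action of $W=\langle w_8,\SM4184\rangle$ and under the swap $(\tau_1,\tau_2)\mapsto(\tau_2,\tau_1)$. I would use two facts. By Shimura reciprocity $\Gal(H/K)$ acts simply transitively on each of its orbits in $\CM(\cdot)$, and (the class group being an elementary $2$-group) every element is an involution. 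Moreover $w_8$ is defined over $\Q$, complex conjugation $\iota$ commutes with $\Gal(H/K)$ on the abelian field $H$, and the permutations of the orbits effected by $w_8$, $\SM4184$, and $\iota$ are exactly those tabulated in Lemma \ref{lemma: actions 8}; consequently, whenever one of these three preserves a $\Gal(H/K)$-orbit it acts there as translation by a fixed element of $\Gal(H/K)$.

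With this in hand the proof becomes a counting argument, carried out along the division (i)--(v). Write $G_1\le\Gal(H/K)$ for the subgroup of translations induced by the diagonal $W$-action on the orbit(s) meeting $\{\tau_1,\tau_2\}$, and let $\sigma'$ be the element carrying $t_1$ to $t_2$. The $W$-invariance of $x$ shows $x(\tau_1,\tau_2)$ is fixed by $G_1$; the swap-invariance together with $\sigma'^2=1$ shows it is fixed by $\sigma'$; and the explicit orbit actions show that $\iota$ agrees with a diagonal $W$-translate followed, if necessary, by a swap, giving invariance under $\iota$ as well. The hypotheses imposed on $\tau_1,\tau_2$ in each case---most importantly ``$\tau_2$ not in the $W$-orbit of $\tau_1$''---are designed precisely to guarantee $\sigma'\notin G_1$, so that a comparison of orders (using $|W|=4$, $[\Gal(H/\Q):\Gal(H/K)]=2$, and $|\Gal(H/K)|=h(d)$ or the genus order) yields $\langle G_1,\sigma',\iota\rangle=\Gal(H/\Q)$. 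Hence $x(\tau_1,\tau_2)$, and likewise $y(\tau_1,\tau_2)$, is fixed by all of $\Gal(H/\Q)$ and lies in $\Q$.

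The hard part will be the orbit bookkeeping, not the field theory. One must verify in each of (i)--(v) that the listed orbit permutations of $w_8$, $\SM4184$, and $\iota$ really do conspire with the regular $\Gal(H/K)$-action so that $\langle G_1,\sigma',\iota\rangle$ exhausts the Galois group. The cross-discriminant cases (i) and (ii), where $\tau_1,\tau_2$ are allowed to lie in $\CM(d)$ together with orbits of $\CM(4d)$, or jointly in $\CM(16d)$, are the most delicate: there $t_1$ and $t_2$ need not be $\Gal(H/K)$-conjugate, and the bridge between the two sets is supplied by $\SM4184\in W$, which by Lemma \ref{lemma: actions 8}(ii),(iii) carries the two orbits of $\CM(d)$ onto the first two orbits of $\CM(4d)$. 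Applying this diagonal $\SM4184$-symmetry first reduces those cases to a single-discriminant count of the type above, after which the same genus-theoretic argument closes the verification.
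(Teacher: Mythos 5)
Your proposal is correct and takes essentially the same route as the paper: the paper's (largely implicit) proof of this corollary follows the model of its $\Gamma_0(6)$ lemma and the worked examples, namely matching the diagonal action of $W=\langle w_8,\SM4184\rangle$, the swap $(\tau_1,\tau_2)\mapsto(\tau_2,\tau_1)$, and complex conjugation against the $\Gal(H/K)$-action on the orbits tabulated in Lemma \ref{lemma: actions 8}, and then concluding rationality from invariance under the whole Galois group of the (multiquadratic) ring class field. Your packaging via $\Gal(H/\Q)$ abelian, rather than the paper's ``fixed by $\Gal(H/K)$ and real, hence in $K\cap\R=\Q$,'' is an equivalent formulation, and your identification of the cross-discriminant cases bridged by $\SM4184$ as the delicate bookkeeping matches where the actual work lies.
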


\begin{Example}
  \begin{enumerate}
  \item[(i)] Let $d=-112$. The four $\Gal(H/K)$-orbits of $\CM(-112)$
    are
    \begin{equation*}
      \begin{split}
      \{Q_1=[8,4,4],~Q_2=[56,-28,4]\}, \quad
      &\{Q_3=[32,4,1],~Q_4=[32,-28,7]\}, \\
      \{Q_5=[8,-4,4],~Q_6=[56,28,4]\}, \quad
      &\{Q_7=[32,-4,1],Q_8=[32,28,7]\}
      \end{split}
    \end{equation*}
    Let $\tau_j$, $j=1,\ldots,8$, be the corresponding CM-points.
    The actions of $w_8$, $\SM4184$, and the complex conjugation map
    $Q_1$ to $Q_7$, $Q_4$, and $Q_5$, respectively. The $W$-orbit of
    $Q_1$ is $\{Q_1,Q_4,Q_6,Q_7\}$. We expect that $x(\tau_1,\tau_j)$
    and $y(\tau_1,\tau_j)$ will be rational for $j=2,3,5,8$. Indeed,
    we find
    $$
    x(\tau_1,\tau_2)=\frac{16}{63}, \quad
    x(\tau_1,\tau_3)=\frac1{8}, \quad
    x(\tau_1,\tau_5)=-\frac1{252}, \quad
    x(\tau_1,\tau_8)=\frac1{8},
    $$
    and
    $$
    y(\tau_1,\tau_2)=y(\tau_1,\tau_3)=\frac1{256}, \quad
    y(\tau_1,\tau_5)=y(\tau_1,\tau_8)=16.
    $$
    The case $j=5$ will yield a $1/\pi$-series.
  \item[(ii)] Let $d=-15$. The two $\Gal(H/K)$-orbits of $\CM(-15)$
    are
    $$
    \{Q_1=[8,7,2],~Q_2=[16,7,1]\}, \quad
    \{Q_3=[8,-7,2],~Q_4=[16,-7,1]\}.
    $$
    The first two $\Gal(H/K)$-orbits of $\CM(-60)$ are
    $$
    \{Q_5=[8,6,3],~Q_6=[24,6,1]\}, \quad
    \{Q_7=[8,-6,3],~Q_8=[24,-6,1]\}.
    $$
    Let $\tau_j$ be the corresponding CM-points.
    The actions of $w_8$, $\SM4184$, and the complex conjugation map
    $Q_1$ to $Q_4$, $Q_7$, and $Q_3$, respectively, and the $W$-orbit
    of $Q_1$ is $\{Q_1,Q_4,Q_6,Q_7\}$. Hence, $x(\tau_1,\tau_j)$ and
    $y(\tau_1,\tau_j)$ should be rational numbers for $j=2,3,5,8$.
    Indeed, we have
    $$
    x(\tau_1,\tau_2)=\frac7{45}, \quad
    x(\tau_1,\tau_3)=-\frac7{40}, \quad
    x(\tau_1,\tau_5)=\frac7{12}, \quad
    x(\tau_1,\tau_8)=\frac7{32},
    $$
    and
    $$
    y(\tau_1,\tau_2)=y(\tau_1,\tau_8)=\frac1{49}, \quad
    y(\tau_1,\tau_3)=y(\tau_1,\tau_5)=\frac{16}{49}.
    $$
  \item[(iii)] Let $d=-480$. There are two $\Gal(H/K)$-orbits
    \begin{equation*}
      \begin{split}
       &\{Q_1=[8,0,15],~Q_2=[24,0,5],~Q_3=[40,0,3],~Q_4=[120,0,1],\\
       &\qquad Q_5=[88,64,13],~Q_6=[104,64,11],~Q_7=[136,128,31],
       ~Q_8=[248,128,17]\} \\
      \end{split}
    \end{equation*}
    and
    \begin{equation*}
      \begin{split}
        &\{Q_1'=[8,8,17],~Q_2'=[136,8,1],~Q_3'=[24,24,11],
        ~Q_4'=[88,24,3], \\
        &\qquad Q_5'=[40,40,13],~Q_6'=[104,40,5],
        ~Q_7=[120,120,31],~Q_8'=[248,120,15]\}.
      \end{split}
    \end{equation*}
    Let $\tau_j$ and $\tau_j'$, $j=1,\ldots,8$, be the corresponding
    CM-points. In the first orbit, the $W$-orbit of $Q_1$ is
    $\{Q_1,Q_4,Q_7,Q_8\}$. We find
    $$
    x(\tau_1,\tau_2)=\frac{11}{240}, \quad
    x(\tau_1,\tau_3)=\frac{31}{320}, \quad
    x(\tau_1,\tau_5)=\frac{11}{16}, \quad
    x(\tau_1,\tau_6)=\frac{31}{96},
    $$
    and
    $$
    y(\tau_1,\tau_2)=y(\tau_1,\tau_5)=\frac1{22^2}, \quad
    y(\tau_1,\tau_3)=y(\tau_1,\tau_6)=\frac1{62^2}.
    $$
    The cases of $\tau_2$ and $\tau_3$ will yield $1/\pi$-series.
    In the second orbit, the $W$-orbit of $Q_1'$ is
    $\{Q_1',Q_2',Q_7',Q_8'\}$. We find that
    $$
    x(\tau_1',\tau_3')=-\frac7{96}, \quad
    x(\tau_1',\tau_4')=\frac{49}{320}, \quad
    x(\tau_1',\tau_5')=-\frac7{16}, \quad
    x(\tau_1',\tau_6')=\frac{49}{240},
    $$
    and
    $$
    y(\tau_1',\tau_3')=y(\tau_1',\tau_5')=\frac1{14^2}, \quad
    y(\tau_1',\tau_4')=y(\tau_1',\tau_6')=\frac1{98^2}.
    $$
    The case of $\tau_3'$ will yield a $1/\pi$-series.
  \end{enumerate}
\end{Example}

\paragraph{\bf Case $\Gamma=\Gamma_0(9)$}

As before, for a negative discriminant $d$, let $K=\Q(\sqrt d)$.
Also, let $R$, $h(d)$, and $H$ be the quadratic order of discriminant
$d$, the class number of $R$, and the ring class field of $R$,
respectively. Let $\CM(d)$ denote the set of CM-points of discriminant
$d$ on $X_0(9)$.

\begin{Lemma} \label{lemma: orbits 9}
  Write a negative discriminant $d$ as $d=r^2d_0$, where
  $d_0$ is a fundamental discriminant. In the following, we represent
  a CM-point by the corresponding quadratic form
  $[A,B,C]:=Ax^2+Bxy+Cy^2$ with $B^2-4AC=d$, $9|A$, and $(A/9,B,C)=1$
  (i.e., $\tau=(-B+\sqrt d)/2A$).
  \begin{enumerate}
  \item[(i)] If $d_0\not\equiv 1\mod 3$ and $3\nmid r$, then
    $|\CM(d)|=0$.
  \item[(ii)] Assume that $d_0\equiv 1\mod 3$ and $3\nmid r$. Let $b$ be
    an integer such that $b^2\equiv d\mod 9$. Then
    $\nu_3(d)=2$ and the two $\Gal(H/K)$-orbits are
    $$
    \{[A,B,C]:B\equiv b\mod 9\}, \qquad
    \{[A,B,C]:B\equiv-b\mod 9\}.
    $$
    The action of $w_9$ and the complex conjugation both swap the two
    orbits. The action of $\SM{-3}{-2}93$ maps all the points in
    $\CM(d)$ to the last orbit in (ii).
  \item[(iii)] Assume that $d_0\equiv 1\mod 3$ and $3\|r$. Then
    $\nu_3(d)=5$ and the five $\Gal(H/K)$-orbits are
    \begin{equation*}
      \begin{split}
        C_1:&\{[A,B,C]:B\equiv 3\mod 9,~3\nmid C\}, \\
        C_2:&\{[A,B,C]:B\equiv 3\mod 9,3|C\}, \\
        C_3:&\{[A,B,C]:B\equiv-3\mod 9,~3\nmid C\}, \\
        C_4:&\{[A,B,C]:B\equiv-3\mod 9,3|C\}, \\
        C_5:&\{[A,B,C]:B\equiv 0\mod 9\}.
      \end{split}
    \end{equation*}
    The action of $w_9$ interchanges $C_1$ with $C_4$, $C_2$ with
    $C_3$, and fixes $C_5$. The action of the complex conjugation
    interchanges $C_1$ with $C_3$, $C_2$ with $C_4$, and fixes $C_5$.
    The action of $\SM{-3}{-2}93$ maps $C_5$ to $\CM(d/9)$ and permutes
    the other CM-points. (It may not map an orbit to an orbit.)
  \item[(iv)] Assume that $d_0\equiv2\mod 3$ and $3\|r$. Then
    $\nu_3(d)=3$ and the three $\Gal(H/K)$-orbits are
    \begin{equation*}
      \begin{split}
        C_1:&\{[A,B,C]: B\equiv 3\mod 9\}, \\
        C_2:&\{[A,B,C]:B\equiv -3\mod 9\}, \\
        C_3:&\{[A,B,C]:B\equiv0\mod 9\}.
      \end{split}
    \end{equation*}
    Both $w_9$ and the complex conjugation switch $C_1$ with $C_2$ and
    fix $C_3$. The action of $\SM{-3}{-2}93$ fixes $C_3$ and permutes
    the other CM-points. (It may not map an orbit to an orbit.)
  \item[(v)] Assume that $27|d$. Then $\nu_3(d)=4$ and the four
    $\Gal(H/K)$-orbits are
    \begin{equation*}
      \begin{split}
        C_1:&\{[A,B,C]:B\equiv 3\mod 9\}, \\
        C_2:&\{[A,B,C]:B\equiv-3\mod 9\}, \\
        C_3:&\{[A,B,C]:B\equiv 0\mod 9,~3\nmid C\}, \\
        C_4:&\{[A,B,C]:B\equiv 0\mod 9,~3|C\}.
      \end{split}
    \end{equation*}
    The action of $w_9$ interchanges $C_1$ with $C_2$ and $C_3$ with
    $C_4$. The action of the complex conjugation interchanges $C_1$
    with $C_2$ and fixes $C_3$ and $C_4$. The action of
    $\SM{-3}{-2}93$ may not map an orbit to an orbit.
  \end{enumerate}
\end{Lemma}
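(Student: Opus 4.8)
The plan is to follow the proof of Lemma~\ref{lemma: actions 8} step for step, replacing the prime $2$ and the order $\SM\Z\Z{8\Z}\Z$ by the prime $3$ and $\O=\SM\Z\Z{9\Z}\Z$. By the count of Vign\'eras, $|\CM(d)|=h(d)\prod_p\nu_p(d)$, where $\nu_p(d)$ is the number of optimal embeddings of discriminant $d$ from $K=\Q(\sqrt d)$ into $\O_p:=\O\otimes_\Z\Z_p$ modulo $\O_p^\times$-conjugation. For every $p\neq 3$ one has $\O_p=\SM{\Z_p}{\Z_p}{\Z_p}{\Z_p}$, whence $\nu_p(d)=1$ and $|\CM(d)|=\nu_3(d)h(d)$. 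The entire statement therefore reduces to a $3$-adic computation: determine $\nu_3(d)$, describe the $\O_3^\times$-conjugacy classes of optimal embeddings into $\O_3=\SM{\Z_3}{\Z_3}{9\Z_3}{\Z_3}$, and translate each class into a congruence condition on the quadratic form $[A,B,C]$.

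An optimal embedding $\phi$ into $\O_3$ is determined by $\phi(\sqrt d)=\SM ab{9c}{-a}$ with $a,b,c\in\Z_3$ and $a^2+9bc=d$. Since $2$ is a unit in $\Z_3$, one checks that $R\otimes\Z_3=\Z_3[\sqrt d]$ in every case, so $\sqrt d$ is a local generator and optimality is simply the requirement that $a,b,c$ are not all divisible by $3$. Under the normalized correspondence the form $[A,B,C]$ (with $9\mid A$ and $(A/9,B,C)=1$) goes to $\phi(\sqrt d)=\SM{-B}{-2C}{2A}{B}$, so that $a=-B$, $b=-2C$, $9c=2A$, and $a^2+9bc=B^2-4AC=d$. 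Thus the residue of $B$ modulo $9$ is the residue of $-a$, the divisibility $3\mid C$ is the divisibility $3\mid b$, and the optimality condition is exactly the primitivity $(A/9,B,C)=1$. This produces the invariants of (ii)--(v): the primary one is the class of $B$ modulo $9$ (giving the $\pm$ splitting through $a\equiv\pm b_0$), and the secondary one, needed only when $3\mid r$, is $3\mid C$ versus $3\nmid C$.

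I would then carry out the enumeration in the five regimes. In case (i) the equation $a^2\equiv d$ has no admissible solution (either $d$ is a nonsquare unit mod $3$, or $3\|d$ forces $3\mid a$ hence $9\mid d$), so $\nu_3(d)=0$. In the remaining cases I count the residues of $a$ modulo $9$ satisfying $a^2\equiv d$ to the needed $3$-adic precision, together with the secondary class of $b$ modulo $3$; I verify via conjugation by $\SM1k01$ and by diagonal matrices that embeddings with the same invariant data are $\O_3^\times$-equivalent, and check inequivalence otherwise. This yields $\nu_3(d)=2,5,3,4$ in (ii)--(v), with the orbits as listed. The symmetries are then read off on forms: $w_9:[A,B,C]\mapsto[9C,-B,A/9]$ and complex conjugation $[A,B,C]\mapsto[A,-B,C]$ act transparently on $B\bmod 9$ and on $C\bmod 3$, while for the normalizer element I compute
$$
\M{-3}{-2}93\M ab{9c}{-a}\M{-3}{-2}93^{-1}
$$
and read off its effect on $(a,b,c)$; since $\det\SM{-3}{-2}93=9$, this conjugation can drop the $3$-level, which is why $\SM{-3}{-2}93$ sends the appropriate orbit into $\CM(d/9)$ and, in cases (iii)--(v), need not carry an orbit to an orbit.

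The main obstacle will be the local bookkeeping at $3$ in the degenerate regimes $3\mid r$ (cases (iii)--(v)), where the orbit count splits according to the secondary invariant $3\mid C$ --- the $3$-adic analogue of the ``$C$ even/odd'' dichotomy of Lemma~\ref{lemma: actions 8} --- and where one must confirm that the pair (class of $B$ mod $9$, class of $C$ mod $3$) is a complete and well-defined $\O_3^\times$-conjugacy invariant. The second delicate point is the non-involutive, level-lowering action of $\SM{-3}{-2}93$: rather than phrasing it as a permutation of the orbits inside a single $\CM(d)$, I would establish the claimed correspondences directly from the conjugation formula above, tracking explicitly how it interchanges embeddings of discriminant $d$ with those of discriminant $d/9$.
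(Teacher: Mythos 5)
Your proposal is correct and follows essentially the same route as the paper: the paper's own proof is almost entirely omitted (``very similar to that of Lemma~\ref{lemma: actions 8}''), reducing to the same $3$-adic classification of optimal embeddings into $\O_3=\SM{\Z_3}{\Z_3}{9\Z_3}{\Z_3}$ and only writing out the conjugation of $\SM ab{9c}{-a}$ by $\SM{-3}{-2}93$ to justify that $C_3$ is fixed in case (iv). Your dictionary $\phi(\sqrt d)=\SM{-B}{-2C}{2A}{B}$, the primary/secondary invariants ($B\bmod 9$ and $3\mid C$), and the resulting counts $\nu_3(d)=2,5,3,4$ all check out against the statement.
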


\begin{Remark} The messiness of the action of $\SM{-3}{-2}93$ is due
  to the fact that $\SM{-3}{-2}93$ only normalizes $\Gamma_0(9)$, but
  not $\SM\Z\Z{9\Z}\Z$.
\end{Remark}

\begin{proof} We omit most of the proof since it is very similar to
  that of Lemma \ref{lemma: actions 8}. Here we merely indicate why
  the action of $\SM{-3}{-2}93$ fixes the orbit $C_3$ in (iv).

  Recall that a $\Gal(H/K)$-orbit is determined by its local optimal
  embeddings. In the case of $X_0(9)$, we only need to consider the
  prime $3$. Let $\O_3=\SM{\Z_3}{\Z_3}{9\Z_3}{\Z_3}$. In the
  case $d_0\equiv 2\mod 3$ and $3\|r$, we can check that two optimal
  embeddings of discriminant $d$ of $K$ into $\O_3$ defined by
  $$
  \sqrt d\longmapsto\M{a_1}{b_1}{9c_1}{-a_1}, \quad
  \sqrt d\longmapsto\M{a_2}{b_2}{9c_2}{-a_2}
  $$
  are $\O_3^\times$-equivalent if and only if $a_1\equiv a_2\mod 9$
  and hence $\nu_3(d)=3$. Now assume that $\phi:R\to\O_3$ is an
  optimal embedding of discriminant $d$, say $\phi(\sqrt d)=\SM
  ab{9c}{-a}$ with $a,b,c\in\Z_3$, $\gcd(a,b,c)=1$, and
  $a^2+9bc=d$. We compute that
  $$
  \M{-3}{-2}93\M ab{9c}{-a}\M{-3}{-2}93^{-1}
  =\M{-3a+3b-6c}{-4a/3+b-4c}{6a-9b+9c}{3a-3b+6c}.
  $$
  When $a\equiv 0\mod 9$, since $(a/3)^2+bc=d/9\equiv 2\mod
  3$, we have $b\equiv-c\mod 3$ and hence $-3a+3b-6c\equiv 0\mod 9$.
  This shows that the action of $\SM{-3}{-2}93$ fixes the orbit
  $C_3$.
\end{proof}

When the class group of $R$ is an elementary $2$-group, we can deduce
rationality criteria for $x$ and $y$ analogous to those given in
Corollary \ref{corollary: rationality 8}. However, there are
situations where the class group is not an elementary $2$-group, but
the values of $x$ and $y$ are still rational. It is complicated to
summarize the conditions into a formal statement, so we will just give
one example.

\begin{Example} \label{example: 9}
  Let $d=-1008$. The class group is isomorphic to the
  direct product of a cyclic group of order $4$ and a cyclic group of
  order $2$. Consider the orbit $C_3$ in Part (iv) of Lemma
  \ref{lemma: orbits 9}. It contains
  \begin{equation*}
    \begin{split}
      Q_1=[9,0,28],~Q_2=[36,0,7],&~Q_3=[63,0,4],~Q_4=[252,0,1],\\
      Q_5=[99,90,23],~Q_6=[99,-90,23],&~Q_7=[261,180,32],
      ~Q_8=[261,-180,32].
    \end{split}
  \end{equation*}
  Let $\tau_j$, $j=1,\ldots,8$, be the corresponding CM-points.
  Using the standard cycle notations, the actions of $w_9$,
  $\SM{-3}{-2}93$, and the complex conjugation are
  $(1,4)(2,3)(5,6)(7,8)$, $(1,7)(2,5)(3,6)(4,8)$, and $(5,6)(7,8)$,
  respectively. Moreover, the form class group is generated by $Q_5$
  of order $4$ and $Q_2$ of order $2$ with $Q_4$ being the identity
  element. In the cycle notation, the multiplications by $Q_5$ and by
  $Q_2$ are $(5,1,6,4)(2,8,3,7)$ and $(2,4)(1,3)(5,8)(6,7)$,
  respectively. Let $\sigma_5$ and $\sigma_2$ be their corresponding
  elements in $\Gal(H/K)$.

  Consider $x(\tau_1,\tau_2)$. According the the Shimura reciprocity
  law \cite[Main Theorem II]{Shimura-CM}, if we let $\beta_1$ and
  $\beta_2$ be elements in $\O=\SM\Z\Z{9\Z}\Z$ with positive
  determinants such that
  $$
  \left(99\Z+\M{-45}{-28}{9}{-45}\Z\right)\O=\beta_1\O, \quad
  \left(99\Z+\M{-45}{-7}{36}{-45}\Z\right)\O=\beta_2\O
  $$
  then
  $$
  t(\tau_1)^{\sigma_5}=t(\beta_1^{-1}\tau_1), \qquad
  t(\tau_2)^{\sigma_5}=t(\beta_2^{-1}\tau_2).
  $$
  Here we may choose $\beta_1=\SM9{-1}{18}9$ and
  $\beta_2=\SM{27}8{18}9$ and find that
  $$
  t(\tau_1)^{\sigma_5}=t(\tau_5), \qquad
  t(\tau_2)^{\sigma_5}=t(\tau_7).
  $$
  It follows that
  $$
  x(\tau_1,\tau_2)^{\sigma_5}=x(\tau_5,\tau_7)
  =x\left(\M{-3}{-2}93\tau_2,\M{-3}{-2}93\tau_1\right)
  =x(\tau_2,\tau_1)=x(\tau_1,\tau_2).
  $$
  By a similar computation, we can also show that
  $$
  x(\tau_1,\tau_2)^{\sigma_2}
  =x(\tau_3,\tau_4)=x(w_9\tau_2,w_9\tau_1)
  =x(\tau_2,\tau_1)=x(\tau_1,\tau_2).
  $$
  (Alternatively and slightly more abstractly, if we write $t(\tau_j)$   as $t([Q_j])$ and let $\sigma_j$ denote the element in $\Gal(H/K)$
  corresponding to $\tau_j\in\CM(d)$, then the Shimura reciprocity
  law can be stated as $t([Q_i])^{\sigma_j}=t([Q_j]^{-1}[Q_i])$. From
  this, we may deduce the same conclusion.)

  Since $\sigma_2$ and $\sigma_5$ generate $\Gal(H/K)$ and the value
  of $x(\tau_1,\tau_2)$ is real, we find that $x(\tau_1,\tau_2)\in\Q$
  and so does $y(\tau_1,\tau_2)$. Indeed, we have
  $$
  x(\tau_1,\tau_2)=\frac{52}{675}, \qquad
  y(\tau_1,\tau_2)=\frac1{2704}.
  $$
  In addition to $(\tau_1,\tau_2)$, we may also consider
  $(\tau_1,\tau_3)$ and find
  $$
  x(\tau_1,\tau_3)=\frac{13}{27}, \qquad
  y(\tau_1,\tau_3)=\frac1{2704}.
  $$
\end{Example}

\paragraph{\bf Case $\Gamma=\Gamma_1(5)$}

When the parameters $(a,b,c)$ are $(11,3,-1)$, the modular curve is
$X_1(5)$. In this case, the values of the modular function $t$ at
CM-points lie in the ray class fields of imaginary quadratic orders
(see \cite{Ramachandra}).

As before, for a negative discriminant $d$, let $R$ and $h(d)$
denote the quadratic order of discriminant $d$ and its class number,
respectively. Also, we write $d$ as $d=r^2d_0$, where $d_0$ is a
fundamental discriminant. We let $\CM_0(d)$ and $\CM_1(d)$ denote the
sets of CM-points of discriminant $d$ on $X_0(5)$ and $X_1(5)$.

We first recall that the order of $M(2,\Q)$ in the case of $X_1(5)$
is
$$
\O:=\left\{\M abcd\in M(2,\Z): 5|c,~a\equiv d\mod 5\right\}.
$$
Assume that $\phi$ is an optimal embedding of discriminant $d$ into
$\O$, say,
\begin{equation} \label{equation: embedding 5}
  \phi(\sqrt d)=\M ab{5c}{-a}
\end{equation}
with $a^2+5bc=d$. In order for $\phi(\sqrt d)$ to be in $\O$, the
integer $a$ must be divisible by $5$. It follows that the discriminant
of an optimal embedding into $\O$ must be a multiple of
$5$. Furthermore, if $5|r$ and $\JS{d/25}5=-1$ (respectively,
$\JS{d/25}5=0$), then $6|h(d)$ (respectively, $5|h(d)$) and the values
of $x$ and $y$ at pairs of CM-points of discriminant $d$ will not be
rational numbers. Thus, we only need to consider the case $25\|d$ and
$d/25\equiv\pm1\mod 5$ and the case $5\|d$.

In the case $25\|d$ and $d/25\equiv\pm1\mod 5$, there are two types of
points in $\CM_1(d)$. One is that the integers $b$ and $c$ in
\eqref{equation: embedding 5} are both divisible by $5$ and the other
is that one of $b$ and $c$ is not divisible by $5$. Points of the
first type are mapped to $\CM_0(d/25)$ under the covering $X_1(5)\to
X_0(5)$, while those of the second type are mapped to $\CM_0(d)$.
The number of points of the first type is
$$
\begin{cases}
  2, &\text{if }d=-100, \\
  4h(d/25), &\text{if }d\neq-100, \end{cases}
$$
and the number of points of the second type is $4h(d)=16h(d/25)$.
We see that the values of $x$ and $y$ at pairs of points from
$\CM_1(d)$ can possibly be rational when $h(d/25)\le 2$. However, in
practice, we do not find any such discriminants that yield
rational-valued $x$ and $y$.

In the case $5\|d$, we have $|\CM_1(d)|=2h(d)$. The ray class groups
in general are not elementary $2$-groups even when the class groups
without modulus are elementary $2$-groups. Thus, it is complicated to
state criteria for $x$ and $y$ to have rational values at points
from $\CM_1(d)$, so here we will simply give an example.
(In fact, we find only two discriminants that
yield $1/\pi$-series.)

\begin{Example} Let $d=-760$ with $h(d)=4$ and $|\CM_1(d)|=8$.
  Let $K=\Q(\sqrt d)$, $R$ be the ring of integers in $K$, and $\mathfrak
  p$ be the unique prime ideal of $R$ lying above $5$. Let $G$ and $H$ be
  the ray class group and the ray class field of $R$ of modulus
  $\mathfrak p$, respectively. In terms of the form class group, the
  $8$ elements of the class group are
  \begin{equation*}
    \begin{split}
      Q_1=[5,0,38],~Q_2=[10,0,19],&~Q_3=[95,0,2],~Q_4=[190,0,1], \\
      Q_5=[970,780,157],~Q_6=[515,420,86],&~Q_7=[430,420,103],
      ~Q_8=[785,780,194].
    \end{split}
  \end{equation*}
  Let $\tau_j$, $j=1,\ldots,8$, be the corresponding CM-points.
  The form class group is generated by $Q_1$ of order $4$ and $Q_6$ of
  order $2$. Using the cycle notations, the multiplication by $Q_1$ is
  given by $(1,8,5,4)(2,3,6,7)$ and that by $Q_6$ is
  $(1,7)(2,8)(3,5)(4,6)$. Also, the actions of $w_5$ and
  $\SM2{-1}5{-2}$ are $(1,4)(2,3)(5,8)(6,7)$ and
  $(1,5)(2,6)(3,7)(4,8)$, respectively. Let $\sigma_j$ denote the
  element in $\Gal(H/K)$ corresponding to $\tau_j\in\CM(d)$. Then
  following a similar computation as Example \ref{example: 9}, we can
  show that
  $$
  x(\tau_1,\tau_3)^{\sigma_1}=x(\tau_4,\tau_2)
  =x(w_5\tau_1,w_5\tau_3)=x(\tau_1,\tau_3),
  $$
  $$
  x(\tau_1,\tau_3)^{\sigma_6}=x(\tau_7,\tau_5)
  =x\left(\M{-2}{-1}5{2}\tau_1,\M{-2}{-1}5{2}\tau_3\right)
  =x(\tau_1,\tau_3)
  $$
  and hence $x(\tau_1,\tau_3)\in K$. Since $t(\tau_1)$ and $t(\tau_3)$
  are clearly real, we find that $x(\tau_1,\tau_3)\in\Q$ and so does
  $y(\tau_1,\tau_3)$. Indeed, we find that
  $$
  x(\tau_1,\tau_3)=\frac{19601}{217800}, \qquad
  y(\tau_1,\tau_3)=\frac1{39202^2}.
  $$
\end{Example}


\subsection{Evaluations of the constants in Theorem \ref{theorem:
    general}}

The constants $B_j$ and $C_j$ in Theorem \ref{theorem: general} depend
on the values of several modular functions at CM-points, including
$t_j$, $\epsilon$, and $\delta_j$. Here we shall describe the
strategies to determine their values.
\medskip

\paragraph{\bf Determination of $t_j$}
The evaluation of $t_j$ is the easiest. The value of the modular
function $1/t$ at a CM-point are algebraic integers since $1/t$ is
integral over $\Z[j]$ and the value of the elliptic $j$-function at a
CM-point is known to be an algebraic
integer. Thus, one only needs to evaluate $1/t$ at all
CM-points in the same Galois orbit to sufficient precision and
recognize any symmetric sum of these values as a rational
integer. This will give us the minimal polynomial of $t_j$ over $\Q$
and hence its exact value.
\medskip

\paragraph{\bf Determination of $\epsilon$}
The determination of $\epsilon$ is a little more complicated. Here we
first give an example and explain the general strategy later.

\begin{Example} \label{example: 6 2}
  Consider the case $(a,b,c)=(-17,-6,72)$ with
  $\Gamma=\Gamma_0(6)$,
  \begin{equation} \label{equation: 6 t f}
  t(\tau)=\frac{\eta(2\tau)\eta(6\tau)^5}{\eta(\tau)^5\eta(3\tau)},
  \qquad
  f(\tau)=\frac{\eta(\tau)^6\eta(6\tau)}{\eta(2\tau)^3\eta(3\tau)^2}.
  \end{equation}
  Let $\tau_1$ and $\tau_2$ be the two CM-points of discriminant
  $-420$ given by \eqref{equation: tau 6} in Example \ref{example:
    6}(i). To obtain a $1/\pi$-series using $\tau_1$ and $\tau_2$, we
  are required to determine the value of
  $\epsilon:=f(\tau_2)/f(\tau_1)$.

  Let $R$ be the quadratic order of discriminant $d=-420$. The class
  group of $R$ is generated by $\mathfrak p_2$, $\mathfrak p_3$, and
  $\mathfrak p_5$, the unique prime ideals lying above $2$, $3$, and
  $5$, respectively. Since the quadratic forms associated to $\tau_1$
  and $\tau_2$ are $[6,6,19]$ and $[30,30,11]$, respectively, i.e.,
  the ideals of $R$ associated to $\tau_1$ and $\tau_2$ are $\mathfrak
  p_2\mathfrak p_3$ and $\mathfrak p_2\mathfrak p_3\mathfrak p_5$,
  respectively, there exists a matrix $\gamma$ of determinant $5$ in
  $\SM\Z\Z{6\Z}\Z$ such that $\gamma\tau_1=\gamma_2$. Indeed, we find
  that $\gamma=\SM1{-2}05$ has this property.

  Now let $\gamma_0=\SM5001$ and $\gamma_j=\SM1j05$, $j=1,\ldots,5$,
  be the six matrices defining the Hecke operator $T_5$ on spaces of
  modular forms on $\Gamma_0(6)$ and let $g_j:=f^2|\gamma_j$. Then we
  have
  $$
  \prod_{j=0}^5\left(x-\frac{g_j}{f^2}\right)\in\Q(t)[x].
  $$
  Using Fourier expansions, we can determine this polynomial $P(x)$
  (which is too complicated to be displayed here). We then specialize
  this polynomial $P(x)\in\Q(t)[x]$ at $t=t(\tau_1)$, whose value is
  given in Example \ref{example: 6}(i), and obtain a polynomial $p(x)$
  in $x$ of degree $6$ over $\Q(\sqrt3,\sqrt5,\sqrt7)$. Since
  $\tau_2=\SM1{-2}05\tau_1$,
  $f(\tau_2)^2/5f(\tau_1)^2=g_3(\tau_1)/f(\tau_1)^2$ is
  a root of $p(x)$. Observe that $\gamma_j\tau_1$, $j\neq 3$,
  are CM-points of discriminant $-5^2\cdot420$. This means that among
  the six roots of $p(x)$, five are in the ring class field of the
  quadratic order of discriminant $-5^2\cdot420$ and the remaining
  root is in $\Q(\sqrt3,\sqrt5,\sqrt7)$. This is the value of
  $f(\tau_2)^2/5f(\tau_1)^2$. After completing these tedious
  calculations, we find that
  $$
  \frac{f(\tau_2)}{f(\tau_1)}
  =\frac{\sqrt5}2(-5+3\sqrt3)(3+\sqrt7)(4-\sqrt{15})(6+\sqrt{35}).
  $$
  (Notice that $f(\tau_2)/\sqrt5f(\tau_1)$ is a unit in
  $\Q(\sqrt3,\sqrt5,\sqrt7)$. There should be a theoretical
  explanation for this phenomenon, but we will not pursue it here.)
\end{Example}

From the example, it should be evident how to determine
$\epsilon=f(\tau_2)/f(\tau_1)$. Namely, we find a matrix $\gamma$ in
the relevant order ($\SM\Z\Z{N\Z}\Z$ except for the case
$(a,b,c)=(11,3,-1)$) of $M(2,\Q)$. Let $n=\det\gamma$ and $\gamma_j$
be the matrices defining the Hecke operators $T_n$, express the
polynomial
$$
\prod\left(x-\frac{f^2|\gamma_j}{f^2}\right)
$$
as a polynomial $P(x)$ in $\Q(t)[x]$, and specialize $P(x)$ at
$t=t(\tau_1)$, say $p(x)$. Then $f(\tau_2)/f(\tau_1)$ will be the
unique root of $p(x)$ that is in the same field as $t(\tau_1)$.
\medskip

\paragraph{\bf Determination of $\delta_j$}

The determination of $\delta_j$ is the most complicated
part. Traditionally, it is done by using modular equations and known
values of modular functions at CM-points. Here we use a different
approach.

\begin{Lemma} Let $t(\tau)$ and $f(\tau)$ be the modular function and
  the modular form of weight $1$ associated to one of the sporadic
  Ap\'ery-like sequences, as given in Table \eqref{table: sporadic}.
  Let $\tau_0$ be a CM-point of discriminat $d$ with $\phi$ being the
  corresponding optimal embedding. Let $\alpha=\phi(\sqrt d)$ and set
  $$
  g=\theta_qt,\qquad h=\frac{g|_2\alpha}g, \qquad
  \delta=\frac{\theta_qh}{f^2}(\tau_0).
  $$
  Then there exists an explicitly computable positive integer $A$,
  depending only on the Galois orbit over $\Q$ of the given CM-point,
  such that $A\delta$ is an algebraic integer.
\end{Lemma}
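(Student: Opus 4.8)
The plan is to collapse $\delta$, at the CM point, to the value of a single $\alpha$-independent weight-$0$ nearly holomorphic modular function, and then to read off $A$ from the orbit polynomial of such values. First I would compute $\theta_qh$ at $\tau_0$ by hand. Writing $\alpha=\SM pqr{-p}$ with $r>0$, the relation $\alpha^2=dI$ gives $p^2+qr=d$, hence $\det\alpha=-d=|d|$, and normalization gives $\alpha\tau_0=\tau_0$ with $r\tau_0-p=\sqrt d$, so that $\det\alpha/(r\tau_0-p)^2=-1$ and $\frac{d}{d\tau}(\alpha\tau)\big|_{\tau_0}=-1$ (these are \eqref{equation: pi 1} and the fact used in Lemma \ref{lemma: one variable pi}). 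Differentiating $\log h=\log\det\alpha-2\log(r\tau-p)+\log g(\alpha\tau)-\log g(\tau)$ and evaluating at $\tau_0$, using $h(\tau_0)=-1$, yields after simplification
\[
(\theta_qh)(\tau_0)=2\,\frac{\theta_qg}{g}(\tau_0)-\frac1{\pi\Im\tau_0}.
\]
Next I would split the quasimodular logarithmic derivative into holomorphic and non-holomorphic parts: since $g$ has weight $2$, the Serre derivative $\vartheta g:=\theta_qg-\frac16E_2g$ is a meromorphic modular form of weight $4$, and $E_2=E_2^\ast+\frac3{\pi\Im\tau}$ with $E_2^\ast$ the nearly holomorphic weight-$2$ Eisenstein series. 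Substituting $2\frac{\theta_qg}{g}=2\frac{\vartheta g}{g}+\frac13E_2^\ast+\frac1{\pi\Im\tau}$, the two $1/(\pi\Im\tau_0)$ terms cancel and I am left with the clean, $\alpha$-free identity
\[
(\theta_qh)(\tau_0)=2\,\frac{\vartheta g}{g}(\tau_0)+\frac13E_2^\ast(\tau_0).
\]

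Consequently $\delta=\Phi(\tau_0)$, where $\Phi:=f^{-2}\bigl(2\,\vartheta g/g+\frac13E_2^\ast\bigr)$ is a weight-$0$ nearly holomorphic modular function on $\Gamma$ whose holomorphic part has rational Fourier coefficients (recall $g=\theta_qt$ and $f^2=g/(t(1-at+ct^2))$, so $t$, $f^2$, $E_2$ are all $q$-series with rational coefficients). By Shimura's theory of CM values of nearly holomorphic modular forms, $\Phi(\tau_0)$ is algebraic, and, by the reciprocity law applied to $\Phi$, its conjugates over $\Q$ are precisely the values $\Phi(\tau^{(i)})$ as $\tau^{(i)}$ ranges over the $\Gal(\overline\Q/\Q)$-orbit of $\tau_0$. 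I would then write $\Phi=2\,\vartheta g/(gf^2)+\frac13E_2^\ast/f^2$: the first summand is a rational function of $t$ with rational coefficients, so its value is controlled by $t(\tau_0)$ and is in particular harmless once one recalls that $1/t(\tau_0)$ is integral over $\Z[j]$, hence an algebraic integer; the genuinely new arithmetic sits entirely in the nearly holomorphic term $E_2^\ast(\tau_0)/f^2(\tau_0)$.

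Finally I would form the orbit polynomial $P(x)=\prod_i\bigl(x-\Phi(\tau^{(i)})\bigr)$. Its coefficients are symmetric functions of a full $\Gal(\overline\Q/\Q)$-orbit, hence rational, so $P\in\Q[x]$ is monic; letting $A$ be a common denominator of the coefficients of $P$, the polynomial $A^{\deg P}P(x/A)$ lies in $\Z[x]$, is monic, and has $A\delta$ as a root, so $A\delta$ is an algebraic integer. Because $P$ depends only on the orbit, so does $A$, and $A$ is explicitly computable: one evaluates the finitely many $\Phi(\tau^{(i)})$ to high precision and recognizes the (rational) elementary symmetric functions. The main obstacle is integrality rather than algebraicity. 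The rational-function-of-$t$ part of $\Phi$ has a denominator one can bound directly from the minimal polynomial of $1/t(\tau_0)$ and the coefficients of that rational function, but I know of no comparably elementary bound for $E_2^\ast(\tau_0)/f^2(\tau_0)$; passing to the orbit polynomial is exactly the device that absorbs this nearly holomorphic term, since clearing the denominator of one polynomial in $\Q[x]$ produces an $A$ that works uniformly across the orbit.
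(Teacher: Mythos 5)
Your reduction of $\delta$ to the value of a single $\alpha$-free function is correct, and it is in fact implicit in the paper: the computation in the proof of Lemma \ref{lemma: one variable pi} shows $(\theta_qh)(\tau_0)=2(\partial_2g/g)(\tau_0)$, so $\delta=\bigl(2\partial_2g/(gf^2)\bigr)(\tau_0)$, which is exactly your $\Phi(\tau_0)$; and your orbit polynomial $P(x)=\prod_i\bigl(x-\Phi(\tau^{(i)})\bigr)$ does lie in $\Q[x]$ once one grants Shimura's reciprocity law for $\Q$-rational nearly holomorphic forms (a point you assert rather than check, but which is available). The genuine gap is in the final step. The lemma asserts an \emph{explicitly computable} $A$, and the paper needs this a priori: $\delta$ is afterwards \emph{determined} by evaluating $A\delta$ numerically over the orbit and recognizing symmetric sums as rational integers, an identification that is rigorous only because $A$ is known beforehand. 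Your $A$ is a common denominator of the coefficients of $P$, and your procedure for finding it is to evaluate the $\Phi(\tau^{(i)})$ numerically and ``recognize'' the rational symmetric functions --- but recognizing a rational number from a floating-point approximation is not a proof unless one already has a bound on its denominator, which is precisely the quantity you are trying to produce. You concede that you cannot bound the denominator contributed by $E_2^\ast(\tau_0)/f(\tau_0)^2$; that concession is the gap. As written, your argument establishes only the (essentially formal) existence of some $A$, not its computability, and so it cannot support the paper's subsequent rigorous evaluation of $\delta$.

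The paper closes exactly this hole by symmetrizing differently: it fixes the point and varies the \emph{function} over the Hecke translates $h_{e,j}$ of $h$ (coset representatives of $T_n$, where $n$ is the determinant of the matrix obtained by factoring $\alpha$ through a normalizer of $\Gamma$). The symmetric functions of the quantities $N\cdot u\cdot\theta_qh_{e,j}/f^2$ --- with $N$ an explicit integer forced by integrality of $q$-expansions and $u$ an explicit polynomial in the Hauptmodul $1/t$ clearing all poles away from $\infty$, both read off from Fourier expansions and divisor computations --- are then modular functions lying in $\Z[1/t]$, so their values at $\tau_0$ are algebraic integers because $1/t(\tau_0)$ is. This produces an a priori $A$ before any numerics are performed. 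If you wish to retain your cleaner $\alpha$-free formulation, you must still supply an integrality input of this kind for the nearly holomorphic term (for instance via integral $q$-expansions of combinations of the type $E_2(\tau)-\ell E_2(\ell\tau)$ and a pole-clearing divisor bound), rather than deferring it to numerical recognition.
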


\begin{Example} Let us continue working on the case considered in
  Example \ref{example: 6 2}, i.e., let $t$ and $f$ be given by
  \eqref{equation: 6 t f}. We check that
  $$
  g=t(1+17t+72t^2)f^2.
  $$
  (In fact, $\theta_qt=t(1-at+ct^2)f^2$ holds in all six
  cases.) Note that
  \begin{equation} \label{equation: div 1}
    \begin{split}
  \div t=(\infty)-(0), \qquad
  &\div(1+8t)=(1/3)-(0), \\
  \div(1+9t)=(1/2)-(0), \qquad
  &\div f^2=2(0),
    \end{split}
  \end{equation}
  and hence
  \begin{equation} \label{equation: div 2}
  \div g=(\infty)+(1/2)+(1/3)-(0).
  \end{equation}

  Consider $\tau_1=(-6+\sqrt{-420})/12$, we have
  $\alpha=\SM{-3}{-19}63$. Observe that
  $$
  \alpha=\M{-3}{-2}63\M1{-17}0{35}.
  $$
  Let $w_3=\SM{-3}{-2}63$ and $\gamma=\SM1{-17}0{35}$. We check that
  $$
  t\big|w_3=-\frac{1+9t}{9(1+8t)}, \qquad
  f^2\big|w_3=9\frac{\eta(3\tau)^{12}\eta(2\tau)^2}
  {\eta(6\tau)^6\eta(\tau)^4}
  =9(1+8t)^2f.
  $$
  Thus,
  $$
  g\big|\alpha=(g\big|w_3)\big|\gamma
  =\frac{g}{9(1+8t)^2}\Big|\gamma.
  $$
  Note that
  \begin{equation} \label{equation: div 3}
  \div\frac g{(1+8t)^2}=(\infty)+(1/2)+(0)-(1/3).
  \end{equation}
  Now let $\gamma_{e,j}=\SM ej0{35/e}$, $e=1,5,7,35$ and
  $j=0,\ldots,35/e-1$, be the standard coset representatives defining
  the Hecke operator $T_{35}$ on spaces of modular forms on $X_0(6)$
  and let
  $$
  h_{e,j}=\frac{g/(1+8t)^2|\gamma_{e,j}}{9g}
  $$
  (with $h=h_{1,18}$ being one of them).
  Since
  $$
  \frac g{(1+8t)^2}\Big|\gamma_{e,j}
  =\frac e{35}\frac{g((e^2\tau+ej)/35)}
  {(1+8t((e^2\tau+ej)/35))^2},
  $$
  we find that the Fourier coefficients of
  $105^2(\theta_qh_{e,j})/f^2$ are all algebraic integers and that any
  symmetric sum of $105^2(\theta_qh_{e,j})/f^2$ has
  integer Fouier coefficients. Futhermore, $\gamma_{e,j}\infty$
  (respectively, $\gamma_{e,j}(1/2)$, $\gamma_{e,j}(1/3)$,
  $\gamma_{e,j}0$) are all equivalent to $\infty$ (respectively,
  $1/2$, $1/3$, $0$) under $\Gamma_0(6)$ for all $e$ and $j$. Hence,
  by \eqref{equation: div 1}, \eqref{equation: div 2}, and
  \eqref{equation: div 3}, we have
  $$
  \div\sum_{e,j}\left(\frac{\theta_qh_{e,j}}{f^2}\right)^k
  \ge-k((\infty)+(1/2)+2(1/3)+2(0))
  $$
  for any positive integer $k$. Thus, if we let $j_6=1/t$, which has
  values $\infty$, $-9$, $-8$, and $0$ at the cusps $\infty$, $1/2$,
  $1/3$, and $0$, respectively, then
  $$
  \sum_{e,j}\left(105^2j_6^2(j_6+9)(j_6+8)^2
    \frac{\theta_qh_{e,j}}{f^2}\right)^k
  $$
  is a modular function on $\Gamma_0(6)$ with integer Fourier
  coefficients and a unique pole at $\infty$ and therefore is equal to
  $P_k(j_6)$ for some $P_k(x)\in\Z[x]$. Now the value of $j_6$ at the
  CM-point $\tau_1$ is an algebraic integer, which implies that
  $$
  105^2j_6(\tau_1)^2(j_6(\tau_1)+9)(j_6(\tau_1)+8)^2
  \frac{\theta_qh_{e,j}}{f^2}(\tau_1)
  $$
  are all algebraic integers for all $e$ and $j$. In particular, if we
  write the value of $j_6(\tau_1)^2(j_6(\tau_1)+9)(j_6(\tau_1)+8)^2$ as
  $B/C$ for some rational integer $B$ and some algebraic integer $C$,
  then $A\delta$ is an algebraic integer for $A=105^2B$. Here we find
  that
  \begin{equation*}
    \begin{split}
  &\frac1{j_6(\tau_1)^2(j_6(\tau_1)+9)(j_6(\tau_1)+8)^2}
   =\frac1{1728}(2-\sqrt3)^6\left(\frac{1-\sqrt5}2\right)^{18} \\
   &\qquad\qquad\qquad\times(4-\sqrt{15})^3
   \left(\frac{5-\sqrt{21}}2\right)^6
  (6-\sqrt{35})^3(2\sqrt5-\sqrt{21})^3.
  \end{split}
  \end{equation*}
  That is, we may choose $B=1728$ and $A=105^2\cdot1728$. From the
  argument, we see that the same integer $A$ works for all
  CM-points in the same Galois orbit of $\tau_1$. Thus, we can
  determine the value of $\delta$ by evaluating
  numerically $A\delta$ to sufficient precision for all CM-points in
  the Galois orbit and recognize every symmetric sum as a rational
  integer. After a tedious computation, we arrive at
  \begin{equation*}
    \begin{split}
      \delta&=\frac1{3\sqrt{35}}(2-\sqrt3)^2
       \left(\frac{1+\sqrt5}2\right)^6(8+3\sqrt7)(4-\sqrt{15})
       \left(\frac{5-\sqrt{21}}2\right)^2(6+\sqrt{35})
       \\
       & (\sqrt{21}-2\sqrt5) \left(\frac{87+80\sqrt3+84\sqrt5+48\sqrt7+14\sqrt{15}
         +17\sqrt{21}-34\sqrt{35}-4\sqrt{105}}2\right).
    \end{split}
  \end{equation*}
  The value of $\delta$ for $\tau_2=(-30+\sqrt{-420})/60$ is the
  Galois conjugate of the above number obtained by $\sqrt3\to-\sqrt3$,
  $\sqrt5\to\sqrt5$, and $\sqrt7\to-\sqrt7$. Combining this
  computation with the results from Examples \ref{example: 6}(i) and
  \ref{example: 6 2} and applying Theorem \ref{theorem: general}, we
  obtain
  $$
  \sum_{n=0}^\infty\sum_{m=0}^{\lfloor n/2\rfloor}
  u_n\binom{2m}m\binom n{2m}(5n-2)\left(-\frac{71}{1008}\right)^n
  \left(\frac1{142^2}\right)^m=\frac{3\sqrt{35}}\pi,
  $$
  where $\{u_n\}$ is the sequence corresponding to
  $(a,b,c)=(-17,-6,72)$ in Table \ref{table: sporadic}.
\end{Example}

\subsection*{Acknowledgements}
The first author was partially supported by the National Natural Science Foundation of China (11801424) and a start-up research grant of the
Wuhan University. The second author was partially supported by Grant 106-2115-M-002-009-MY3 of the Ministry of Science and Technology,
Taiwan (R.O.C.).

The authors would like to thank Wadim Zudilin for many insightful
comments on the earlier draft of the paper, especially those about
Brafman's works.

\appendix
\section{$2$-variable $1/\pi$-series for sporadic Ap\'ery sequences}
\label{section: sporadic data}

\begin{table}[!htbp]
$$ \extrarowheight3pt
\begin{array}{c|cccccc} \hline\hline
  d & x & y & A & B & C & \text{Ref.}\\ \hline
  -96 & -1/12 & -1/8 & 9 & 3 & 4\sqrt2 &\text{\cite[(6.1)]{Sun-2019}}
  \\
  -192 & -1/36 & -1/32 & 99 & 23
                    &  39\sqrt2&\text{\cite[(6.2)]{Sun-2019}} \\
  -240 & 1/60 & 1/64 & 315 & 51 & 92\sqrt5
              & \text{\cite[(6.3)]{Sun-2019}} \\
   & 47/441 & 1/47^2 & 180 & -8 & 483\sqrt5 & \text{\cite[(6.4)]{Sun-2019}}\\
  -660 & -97/1176 & 1/194^2 & 495 & 138 & 112\sqrt 5  &\text{\cite[(6.5)]{Sun-2019}}\\
  -840 & 241/5808 & 1/482^2 & 630 & 75 & 374\sqrt2 &\text{\cite[(6.6)]{Sun-2019}} \\
    & 449/5600 & 1/898^2 & 990 & 31 & 680\sqrt7  &\text{\cite[(6.7)]{Sun-2019}}\\
  -1092 & -727/6776 & 1/1454^2 & 585 & 172 & 110\sqrt7  &\text{\cite[(6.8)]{Sun-2019}}\\
  -1320 & 1057/50784 & 1/2114^2 & 630 & 77 & 92\sqrt{15} &\text{\cite[(6.9)]{Sun-2019}} \\
    & 4801/47040 & 1/9602^2 & 2277 & -412 & 2156\sqrt{15} &  \\
  -1380 & -1151/52992 & 1/2302^2 & 94185 & 17014 &
    8520\sqrt{23} &\text{\cite[(6.10)]{Sun-2019}}\\
  -1428 & -2177/30976 & 1/4354^2 & 5355 & 1381 & 968\sqrt7 &\text{\cite[(6.11)]{Sun-2019}}\\
  -1848 & 8449/237952 & 1/16898^2 & 62370 & 6831 & 2912\sqrt{231} &\text{\cite[(6.12)]{Sun-2019}}\\
    & 19601/226800 & 1/39202^2 & 12870 & -439 & 6930\sqrt{14}\\
\hline\hline
\end{array}
$$
\caption{$1/\pi$-series for $(a,b,c)=(7,2,-8)$}
\label{table: 6-1}
\end{table}

\begin{table}[!htbp]
$$ \extrarowheight3pt
\begin{array}{c|cccccc} \hline\hline
  d & x & y & A & B & C &\text{Ref.}\\ \hline
  -96 & -7/81 & -8/7^2 & 32 & 12 & 9\sqrt3
      &\text{\cite[(7.1)]{Sun-2019}} \\
  -192 & -31/1089 & -32/31^2 & 64 & 16 & 33
                           &\text{\cite[(7.2)]{Sun-2019}}\\
  -240 & 13/135 & 1/52^2 & 7 & -1 & 30\sqrt3
                         &\text{\cite[(7.3)]{Sun-2019}}\\
    & 65/3969 & 64/65^2 & 160 & 24 & 63\sqrt3
                         &\text{\cite[(7.4)]{Sun-2019}} \\
  -660 & -89/990 & 1/178^2 & 280 & 93 & 20\sqrt{33}  &\text{\cite[(7.5)]{Sun-2019}}\\
  -840 & 251/6300 & 1/502^2 & 176 & 15 & 25\sqrt{42}  &\text{\cite[(7.6)]{Sun-2019}}\\
       & 485/6534 & 1/970^2 & 560 & -23 & 693\sqrt3 &\text{\cite[(7.7)]{Sun-2019}}\\
 -1320 & 1079/52920 & 1/2158^2 & 12880 & 1353 & 4410\sqrt3  &\text{\cite[(7.8)]{Sun-2019}}\\
    & 5291/57132 & 1/10582^2 & 6160 & -1824 & 15939\sqrt3
                 & \text{\cite[(7.11)]{Sun-2019}} \\
 -1380 & -1126/50715 & 1/2262^2 & 19136 & 3776 & 735\sqrt{115} &\text{\cite[(7.9)]{Sun-2019}}\\
 -1428 & -2024/26775 & 1/4048^2 & 24640 & 7552 & 2415\sqrt{17} &\text{\cite[(7.10)]{Sun-2019}}\\
 -1848 & 8749/255150 & 1/17498^2 & 32032 & 2546 & 14175\sqrt3 &\text{\cite[(7.12)]{Sun-2019}} \\
       & 21295/267696 & 1/42590^2 & 560 & -67 & 286\sqrt{22} \\
\hline\hline
\end{array}
$$
\caption{$1/\pi$-series for $(a,b,c)=(10,3,9)$}
\label{table: 6-2}
\end{table}

\begin{table}[!htbp]
$$ \extrarowheight3pt
\begin{array}{c|ccccc} \hline\hline
  d & x & y & A & B & C \\ \hline
  -96 & -1/18 & -1/32 & 1 & 0 & \sqrt6\\
  -180 & -1/16 & 1/18^2 & 5 & -1 & 9\sqrt3 \\
    & -31/320 & 1/62^2 & 5 & -6 & 30\sqrt3 \\
  -192 & -5/216 & -1/50 & 11 & 1 & 6\sqrt3\\
  -240 & 7/360 & 1/7^2 & 35 & 9 & 4\sqrt{15} \\
  -420 & -71/1008 & 1/142^2 & 5 & -2 & 3\sqrt{35} \\
    & -161/1728 & 1/322^2 & 35 & -41 & 252\sqrt3\\
  -660 & -161/3240 & 1/322^2 & 385 & -57 & 360\sqrt3\\
    & -1079/10584 & 1/2158^2 & 5 & -11 & 84\sqrt3 \\
  -840 & 161/2592 & 1/322^2 & 385 & 150 & 72\sqrt 3\\
  -1092 & -1351/23400 & 1/2702^2 & 385 & -101 & 210\sqrt{13} \\
    & -6049/60984 & 1/12098^2 & 65 & -124 & 132\sqrt{91}\\
  -1320 & 881/35280 & 1/1762^2 & 805 & 209 & 210\sqrt2 \\
  -1380 & -1351/73008 & 1/2702^2 & 1610 & 76 & 585\sqrt3 \\
    & -51841/476928 & 1/103682^2 & 65 & -375 & 4968\sqrt3 \\
  -1428 & -3401/75600 & 1/6802^2 & 13090 & -1714 & 9765\sqrt3 \\
    & -28799/278784 & 1/57598^2 & 595 & -1602 & 2728\sqrt{51} \\
  -1848 & 6049/121968 & 1/12098^2 & 30030 & 10506 & 4081\sqrt6\\
  \hline\hline
\end{array}
$$
\caption{$1/\pi$-series for $(a,b,c)=(-17,-6,72)$}
\label{table: 6-3}
\end{table}

\begin{table}[!htbp]
$$ \extrarowheight3pt
\begin{array}{c|cccccc} \hline\hline
  d & x & y & A & B & C & \text{Ref.} \\ \hline
  -180 & -2/27 & -1/4^2 & 26 & 5 & 27\sqrt3 \\
  -288 & -7/50 & -1/14^2 & 13 & 3 & 30\sqrt2
                        &\text{\cite[(9.2)]{Sun-2019}}\\
  -315 & -1/6 & 1/18^2 & 14 & 5 & 27\sqrt3
                        &\text{\cite[(9.3)]{Sun-2019}} \\
  -576 & -22/243 & -1/968 & 76 & 8 & 81\sqrt3
                 &\text{\cite[(9.4)]{Sun-2019}} \\
  -819 & -55/378 & 1/110^2 & 182 & 37 & 315\sqrt3  &\text{\cite[(9.6)]{Sun-2019}}\\
  -1008 & 52/675 & 1/52^2 & 182 & 64 & 45\sqrt3 &\text{\cite[(9.5)]{Sun-2019}} \\
  -3627 & -12151/95256 & 1/24302^2 & 107198 & 8989 & 147420\sqrt3 &\\
  -3843 & -6049/151200 & 1/12098^2 & 410774 & 33451 & 182700\sqrt3 &\\
  \hline\hline
\end{array}
$$
\caption{$1/\pi$-series for $(a,b,c)=(-9,-3,27)$}
\label{table: 9}
\end{table}

\begin{table}[!htbp]
$$ \extrarowheight3pt
\begin{array}{c|cccccc} \hline\hline
  d & x & y & A & B & C & \text{Ref.} \\ \hline
  -240 & 13/225 & 1/52^2 & 145 & 9 & 285 &\text{\cite[(8.1)]{Sun-2019}} \\
  -760 & 19601/217800 & 1/39202^2 & 95 & -1388 & 9405\sqrt{19} &\\
  \hline\hline
\end{array}
$$
\caption{$1/\pi$-series for $(a,b,c)=(11,3,-1)$}
\label{table: 5}
\end{table}

\begin{table}[!htbp]
$$ \extrarowheight3pt
\begin{array}{c|cccccc} \hline\hline
  d & x & y & A & B & C & \text{Ref.}\\ \hline
  -60 & 1/32 & 1 & 15 & 3 & 8(2+\sqrt5) & \text{\cite[(5.1)]{Sun-2019}} \\
  -64 & 1/36 & -2 & 3 & 1 & 3 &\text{\cite[(5.3)]{Sun-2019}}\\
  -32,-288 & 7/96 & 1/14^2 & 3 & 0 & 8 &\text{\cite[(5.4)]{Sun-2019}}\\
  -112 & -1/252 & 16 & 21 & 5 & 6\sqrt 7 &\text{\cite[(5.2)]{Sun-2019}}\\
  -480 & -7/96 & 1/14^2 & 30 & 11 & 12  &\text{\cite[(5.5)]{Sun-2019}}\\
    & 11/240 & 1/22^2 & 15 & 1 & 6\sqrt{10} &\text{\cite[(5.6)]{Sun-2019}} \\
    & 31/320 & 1/62^2 & 30 & -7 & 160 &\text{\cite[(5.9)]{Sun-2019}}\\
  -672 & -13/336 & 1/26^2 & 21 & 6 & 2\sqrt{21} &\text{\cite[(5.7)]{Sun-2019}} \\
    & 17/576 & 1/34^2 & 21 & 2 & 18  &\text{\cite[(5.8)]{Sun-2019}}\\
    & 97/896 & 1/194^2 & 6 & -3 & 56  &\text{\cite[(5.13)]{Sun-2019}}\\
  -1120 & -71/720 & 1/142^2 & 210 & 85 & 33\sqrt5 &\text{\cite[(5.12)]{Sun-2019}}\\
    & 127/2304 & 1/254^2 & 210 & -1 & 288 &\text{\cite[(5.16)]{Sun-2019}}\\
    & 251/2800 & 1/502^2 & 42 & -10 & 105\sqrt{2} &\text{\cite[(5.17)]{Sun-2019}}\\
  -1248 & -49/4800 & 1/98^2 & 195 & 34 & 80 &\text{\cite[(5.10)]{Sun-2019}}\\
    & 53/5616 & 1/106^2 & 195 & 22 & 27\sqrt{13} &\text{\cite[(5.11)]{Sun-2019}}\\
    & 1249/10400 & 1/2498^2 & 78 & -131 & 2600 &\text{\cite[(5.20)]{Sun-2019}}\\
  -1632 & -97/18816 & 1/194^2 & 1785 & 254 & 672 &\text{\cite[(5.14)]{Sun-2019}}\\
    & 101/20400 & 1/202^2 & 210 & 23 & 15\sqrt{34} &\text{\cite[(5.15)]{Sun-2019}}\\
    & 4801/39200 & 1/9602^2 & 510 & -1523 & 33320 &\text{\cite[(5.23)]{Sun-2019}}\\
  -2080 & -577/18496 & 1/1154^2 & 7410 & 1849 & 2992 &\text{\cite[(5.18)]{Sun-2019}}\\
    & 721/28880 & 1/1442^2 & 6630 & 505 & 2014\sqrt5 &\text{\cite[(5.19)]{Sun-2019}}\\
    & 5201/46800 & 1/10402^2 & 570 & -457 & 1590\sqrt{13}&\text{\cite[(5.24)]{Sun-2019}} \\
  -3040 & -2737/197136 & 1/5475^2 & 62985 & 11363 & 7659\sqrt{10} &\text{\cite[(5.21)]{Sun-2019}} \\
    & 3041/243360 & 1/6082^2 & 358530 & 33883 & 176280 &\text{\cite[(5.22)]{Sun-2019}}\\
    & 52021/439280 & 1/104042^2 & 3705 & -5918 & 36499\sqrt5 \\
    \hline\hline
\end{array}
$$
\caption{$1/\pi$-series for $(a,b,c)=(12,4,32)$}
\label{table: 8}
\end{table}

\section{2-Variable $1/\pi$-series for hypergeometric sequences}\label{section:hyper-data}
\begin{small}
\begin{table}[!htbp]
$$ \extrarowheight3pt
\begin{array}{c|cccccc} \hline\hline
  d & x & y & A & B & C & \text{Ref.}\\ \hline
 -7 & -{1}/{16} & 16 & 30 & 7 & 24  & \text{\cite[(\uppercase\expandafter{\romannumeral1}1)]{Sun-list}} \\
 -16, -64 &1/2 & -1/32&  6 & 1 & 2\sqrt{8+6\sqrt{2}} &\text{\cite[(\uppercase\expandafter{\romannumeral1}5)]{Sun-2019}}^{\rm a}  \\
   -192 & -{17}/{32} & {1}/{34^2} & 30 & 7 & 12 & \text{\cite[(\uppercase\expandafter{\romannumeral1}2)]{Sun-list}}\\
   -240 &  {31}/{128}  & {1}/{62^2}   & 42 & 5 & 16\sqrt{3} & \text{\cite[(\uppercase\expandafter{\romannumeral1}4)]{Sun-list}}\\
     &  {97}/{128}  & {1}/{194^2}  &  30 & -1 & 80 &  \text{\cite[(\uppercase\expandafter{\romannumeral1}3)]{Sun-list}} \\
 \hline\hline
\end{array}
$$
$^{\rm a}$ \footnotesize{For this case we take $\tau_1=\frac{1}{2}+\frac{1}{2}i$ and $\tau_2=i$ in Remark \ref{rem-thm-Sun}.}
\caption{$1/\pi$-series for $a=1/2$}
\label{table:a=2}
\end{table}
\vspace{-0.5cm}
\begin{table}[!htbp]
$$ \extrarowheight3pt
\begin{array}{c|cccccc} \hline\hline
  d & x & y & A & B & C & \text{Ref.}\\ \hline
 -12,-48&  {1}/{2} & {1}/54 & 60 & 8 & {45\sqrt{3}}  & \text{ \cite[(\uppercase\expandafter{\romannumeral2}1)]{Sun-list}} \\
  -20 & -7/{20} & 729/980 & 54 & 12 & {15\sqrt{3}+\sqrt{15}}  & \text{\cite[(\uppercase\expandafter{\romannumeral2}11)]{Sun-list}} \\
  -32 & -{2}/{25} & 729/800  & 756 & 132 & 75\sqrt{3}+100\sqrt{6} & \text{\cite[(\uppercase\expandafter{\romannumeral2}12)]{Sun-list}}\\
  -35 & {13}/{256} & 729/676 & 135 & 21 & 24\sqrt{3}+8\sqrt{15} & \text{\cite[(\uppercase\expandafter{\romannumeral2}10)]{Sun-list}} \\
  -60 &-3/5 & -1/45 &  72 & 16 & 7\sqrt{15} & \text{\cite[(\uppercase\expandafter{\romannumeral2}13)]{Sun-2019}} \\
  -72 & {27}/{100} & {1}/100 & 182 & 24 & 75\sqrt{3} & \text{\cite[(\uppercase\expandafter{\romannumeral2}2)]{Sun-list}} \\
      & 73/100 & 729/730^2 & 18 & 1 & 25\sqrt{3} & \text{\cite[(\uppercase\expandafter{\romannumeral2}5)]{Sun-list}} \\
  -84 & -1/{4} & -1/108 & 39 & 7 &9\sqrt{3} & \text{\cite[(\uppercase\expandafter{\romannumeral2}14)]{Sun-2019}} \\
  -120 & 1/{12} & 1/{18}^2  &210 & 25 & 54\sqrt{3}  & \\
     & 11/12 & 1/{{198}^2} & 30 & -8 & 135\sqrt{3} & \text{\cite[(\uppercase\expandafter{\romannumeral2}3)]{Sun-list}}\\
  -132 & -{3}/{44} & -{1}/396 & 45 & 6 &5\sqrt{11}  &\\
  -168 & {3}/{100} & {1}/{{30}^2} & 198  & 21 & 50\sqrt{2}&\\
    & {97}/{100} & {1}/{{970}^2} & 42 & -41 &525\sqrt{3}  & \text{\cite[(\uppercase\expandafter{\romannumeral2}4)]{Sun-list}}\\
  -228 & -{1}/{100} & -{1}/2700 & 17157 & 1654 & 2925\sqrt{3}  &\\
  -240 & 488/1331 & 1/488^2 & 11310 & 976 & 4719\sqrt{3}&\\
       & {843}/1331 & {1}/{843^2} & 2520 & 48 & 1573\sqrt{5} &\\
  -312 & {3}/1156 & {1}/{102^2} &13860 & 1118 & 1445\sqrt{6}    & \text{\cite[(\uppercase\expandafter{\romannumeral2}6)]{Sun-list}}\\
    & {1153}/1156 & {1}/{39202^2}  &390 & -3967 & 56355\sqrt{3} & \text{\cite[(\uppercase\expandafter{\romannumeral2}8)]{Sun-list}} \\
  -372 &-1/900 & -{1}/24300 & 105339 & {7843} & 14175\sqrt{3} &\\
   -408 & {1}/1452 &{1}/{198^2} & 888420 &62896 & 114345\sqrt{3} & \text{\cite[(\uppercase\expandafter{\romannumeral2}7)]{Sun-list}} \\
    & {1451}/1452 & {1}/{{287298}^2} & 210 &-7157 & 114345\sqrt{3} & \text{\cite[(\uppercase\expandafter{\romannumeral2}9)]{Sun-list}} \\
  -435 & {-107}/256 &{1}/{{5778}^2} & 39585 &7075 & 7344\sqrt{3} & \\
  -555 & -{9249}/42592 & {1}/{{18498}^2} & 7245 & 1073 & 605\sqrt{15} &\\
  -708 & -{3}/124844 &-{1}/{1123596} & 6367095 & 342786 & 140185\sqrt{59} & \\
  -795 & -{7361}/96000 & {1}/{{132498}^2} & 62403 & 7049 &10800\sqrt{3} & \\
 \hline\hline
\end{array}
$$
\caption{$1/\pi$-series for $a=1/3$}
\label{table:a=3}
\end{table}

\begin{table}[!htbp]
$$ \extrarowheight3pt
\begin{array}{c|cccccc} \hline\hline
  d & x & y & A & B & C & \text{Ref.}\\ \hline
  -8,-72&1/2 &1/98^2&360&27&70\sqrt{21}&
\text{ \cite[(\uppercase\expandafter{\romannumeral3}3)]{Sun-list}} \\
  -15 & -7/{21}^2& 64^2/7^2 & 640 & 104 & 21\sqrt{42}+14\sqrt{210}  & \text{ \cite[(\uppercase\expandafter{\romannumeral3}5)]{Sun-list}} \\
  -48 & 257/{33}^2 & {16}^2/{257}^2 & 160 & 18 &11\sqrt{66} & \text{\cite[(\uppercase\expandafter{\romannumeral3}1)]{Sun-list}} \\
    & 832/{33}^2& 1/{52^2} & 85 & 2 &33\sqrt{33} &  \text{\cite[(\uppercase\expandafter{\romannumeral3}4)]{Sun-list}}\\
  -64 & -511/{63}^2 & -{512}/{{511}^2} & 704 & 92 &{63\sqrt{14}} & \\
 -84 & -{110}/{12^2} & {1}/{110^2} & 28 & 5 & 3\sqrt{6}  & \text{\cite[(\uppercase\expandafter{\romannumeral3}2)]{Sun-list} }\\
  -112 & {4097}/{513^2} & {64^2}/{4097^2} & 19040  & 1682 & {513\sqrt{114}} &\\
   &{259072}/{513^2} & {1}/{4048^2} & 455 & -784 & 2052\sqrt{57} & \\
  -120 & {322}/{42^2} & {1}/{322^2} & 760 & 71 &126\sqrt{7} & \text{\cite[(\uppercase\expandafter{\romannumeral3}6)]{Sun-list}}\\
       &{1442}/{42^2} & {1}/{1442^2} &40 & -4 &7\sqrt{210}  & \text{\cite[(\uppercase\expandafter{\romannumeral3}7)]{Sun-list}}\\
  -132 & -{398}/{48^2} &1/{398^2} & 260 & 33 &32\sqrt{6} & \text{\cite[(46)]{CWZ}}\\
  -168 &898/114^2 & {1}/{898^2} &3080 &276 &95\sqrt{114} & \text{\cite[(\uppercase\expandafter{\romannumeral3}8)]{Sun-list}}\\
       & {12098}/{114^2} &{1}/{12098^2} & 280 & -139 &95\sqrt{399} & \text{\cite[(\uppercase\expandafter{\romannumeral3}9)]{Sun-list}} \\
  -228 & -{2702}/{336^2} &{1}/{2702^2} & 83980 & 7331 & 3360\sqrt{42}  & \text{\cite[(47)]{CWZ}} \\
  -280 & {39202}/{378^2} & {1}/{2702^2} & 1840 & 136 & 135\sqrt{42} & \\
       & {103682}/{{378}^2} &{1}/{{103682}^2} & 440 & -25 &  378\sqrt{3} &\\
  -312 & {10402}/{1302^2} & {1}/{10402^2} & 337480 & 24044 & 3689\sqrt{434}  & \text{\cite[(\uppercase\expandafter{\romannumeral3}10)]{Sun-list}}\\
      & {1684802}/{1302^2} &1/{1684802^2} & 8840 & -50087 & 7378\sqrt{8463}  & \text{\cite[(\uppercase\expandafter{\romannumeral3}11)]{Sun-list}}\\
  -340 & -{103682}/{684^2} & {1}/{103682^2}& 97580 & 12197 & 2736\sqrt{95} &\\
  -372 & -24302/3036^2 & {1}/{24302^2} & 2143960 & 142322 &11385\sqrt{1518}  & \text{\cite[(48)]{CWZ}}\\
  -408 & {39202}/{4902^2} & {1}/{39202^2} &11657240 & 732103 & 80883\sqrt{817}  & \text{\cite[(\uppercase\expandafter{\romannumeral3}12)]{Sun-list}}\\
     & {23990402}/{4902^2} & {1}/{23990402^2} &3080 & -58871 &17974\sqrt{2451} & \text{\cite[(\uppercase\expandafter{\romannumeral3}13)]{Sun-list}}\\
  -520 & {1684802}/{5922^2} & 1/{1684802^2} & 1382440 & 106756 &14805\sqrt{658}  &  \\
      & 33385282/5922^2 & 1/{33385282^2} & 337480& -320300& 115479\sqrt{658} &\\
    -532 &-5177198/{3348^2} & {1}/{5177198^2} & 602140 & 88597 & 4185\sqrt{1302} & \\
     -708 & {1123598}/{140448^2} &{1}/{1123598^2} & 1898208780 &90848259 & 4307072\sqrt{4389} & \text{\cite[(49)]{CWZ}}\\
  -760 &{33385282}/{55062^2} & {1}/{33385282^2} & 27724840 & {1877581}  & 49266\sqrt{15295} &\\
   & 2998438562/{55062^2} & {1}/{2998438562^2}& 72760 & -289964&  156009\sqrt{322} &\\
  \hline\hline
\end{array}
$$
\caption{$1/\pi$-series for $a=1/4$}
\label{table:a=4}
\end{table}

\end{small}

\end{document}